\newtheorem{theorem}{Theorem}[section]
\newtheorem{lem}[theorem]{Lemma}
\newtheorem{prop}[theorem]{Proposition}
\newtheorem{cor}[theorem]{Corollary}
\newtheorem{remark}[theorem]{Remark}
\theoremstyle{definition}
\newtheorem{defin}[theorem]{Definition}
\def\R{\mathbb R}
\def\N{\mathbb N}
\def\d{\partial}
\title[Constrained nonlinear parabolic equations]{Existence and Asymptotic Behavior for $L^2$-norm Preserving Nonlinear Heat Equations}
\author[P. Antonelli]{Paolo Antonelli}
\address{Gran Sasso Science Institute, viale Francesco Crispi, 7, 67100 L'Aquila, Italy}
\email{paolo.antonelli@gssi.it}
\author[P. Cannarsa]{Piermarco Cannarsa}
\address{University of Rome “Tor Vergata”, Rome, Italy}
\email{cannarsa@mat.uniroma2.it}
\author[B. Shakarov]{Boris Shakarov}
\address{Gran Sasso Science Institute, viale Francesco Crispi, 7, 67100 L'Aquila, Italy}
\email{boris.shakarov@gssi.it}
\date{\today}
\begin{document}
	\maketitle
	\begin{abstract}
We consider a nonlinear parabolic equation with a nonlocal term which preserves the $L^2$-norm of the solution. 
We study the local and global well-posedness on a bounded domain, as well as the whole Euclidean space, in $H^1$. 
Then we study the asymptotic behavior of solutions. In general, we obtain weak convergence in $H^1$ to a stationary state. For  a ball, we prove strong  convergence to the ground state when the initial condition is positive.
	\end{abstract}

\section{Introduction}\label{sec:heat_intro}

In this work, we study the existence, uniqueness, and asymptotic behavior of the solution to the following nonlinear, nonlocal parabolic equation
	\begin{equation}\label{eq:heat}
		\begin{cases}
			& \partial_t u = \Delta u + g |u|^{2\sigma} u + \mu[u] u, \\
			& u_{|\partial\Omega} = 0,\\
			& u(0) = u_0 \in H^1_0(\Omega),
		\end{cases}
	\end{equation}	
where $u:\R_+\times\Omega\to\R$, $g\in\R$, $\sigma>0$, and the functional $\mu[\cdot]$ is defined by
\begin{equation}\label{eq:mu}
	\mu[u] := \frac{\| \nabla u \|^2_{L^2} - g \| u\|^{2\sigma + 2}_{L^{2\sigma + 2}}}{{\|u\|_{L^2}^2}}.
\end{equation}
Here we consider both the case when $\Omega\subset\R^d$ is a regular, bounded domain with boundary $\partial\Omega$ of class $C^2$, and when $\Omega = \R^d$ is the Euclidean space. In the latter case, the Dirichlet boundary condition may be interpreted as
\begin{equation*}
u\to0, \quad\textrm{as}\quad |x|\to\infty.
\end{equation*}
Let us note that the functional $\mu[u]$ ensures that the $L^2$-norm is preserved along the flow, namely 
\begin{equation*}
	\| u(t)\|_{L^2} = \| u_0\|_{L^2},
\end{equation*}
for any $t>0$.
Hence, $\mu[u]$ may be interpreted as a Lagrange multiplier that takes into account the fact that the solution is constrained to stay on a given sphere in $L^2$, whose radius is prescribed by the initial datum. In this respect, equation \eqref{eq:heat} may be viewed as an $L^2$ gradient flow constrained on a manifold. More precisely, having set $\lambda=\|u_0\|_{L^2}$, let us define
\begin{equation*}
\mathcal M=\{u\in H^1_0(\Omega)\,:\,\|u\|_{L^2}=\lambda\}.
\end{equation*}
Then \eqref{eq:heat} can be written as
\begin{equation*}
\d_tu=-\nabla_{\mathcal M}E(u),
\end{equation*}
where $\nabla_{\mathcal M}$ is the $L^2$-gradient projected onto the tangent space $T_u\mathcal M$ and the energy functional is defined by
\begin{equation}\label{eq:en2} 
	E[u(t)] := \frac{1}{2}\|\nabla u(t)\|^2_{L^2} - \frac{g}{2\sigma + 2 } \| u(t)\|^{2\sigma + 2}_{L^{2\sigma + 2}}.
\end{equation}
Nonlocal heat flows arise in geometrical problems where some $L^p$-norms, related to specific geometrical quantities, are required to be preserved by the dynamics, see for instance \cite{Au98}, \cite{St00} and the references therein.
In particular, the Yamabe flow was set up to show the existence of a solution to the celebrated Yamabe problem. More precisely, the conformal transformation bringing a metric $g_0$, with scalar curvature $R_0$, into a metric with constant scalar curvature can be determined by the asymptotic limit for large times of the evolutionary problem
\begin{equation*}
\d_tu^p=s(t)u^p+\frac{4(d-1)}{(d-2)}\Delta_0u-R_0u,
\end{equation*}
where $\Delta_0$ is the Laplace-Beltrami operator and $p=2^*-1$. Here the Lagrange multiplier $s(t)$ is determined in such a way that the volume, proportional to $\int|u|^{2^*}$, is preserved. We refer the interested reader to \cite{Ye94, Br05} for more details.
\newline
Our study is also motivated by recent numerical works that exploit normalized gradient flow methods to compute the ground states for some models in Bose-Einstein condensation, see \cite{ChSuTo00, Du02, BaDu04} for instance. 
In this perspective, our results may be seen as a rigorous justification for the methods proposed in those papers, see Theorem \ref{thm:asy} below.
\newline
Equation \eqref{eq:heat} with $g=0$ was already rigorously studied in \cite{CaLi09}. Apart from the nonlocal term, equation \eqref{eq:heat} becomes linear when $g=0$. This fact allows for a detailed description of the asymptotic behavior of the solution, which is characterized by the initial condition. In \cite{CaLi09}, this result is also used to study a singularly perturbed heat flow, with applications to an optimal partition problem \cite{CaLi07}, see also \cite{Du08} for a numerical implementation of the method.
\newline
In \cite{MaCh09}, the authors consider equation \eqref{eq:heat} on a closed manifold (i.e., a compact manifold with no boundary) and $g<0$. They show the global existence of solutions and study their asymptotic behavior. Let us remark that this choice of $g$ makes the energy functional \eqref{eq:en2} always non-negative definite.
\newline	
In this work, we extend the above results by considering $g \in \R$ and the case when either $\Omega$ is a bounded domain or $\Omega=\R^d$. 
The positive sign of $g$ implies that the energy does not control the $H^1$-norm of the solution in general. 
In particular, we show the existence of global-in-time solutions in the case $g>0$ and $0<\sigma<2/d$, for arbitrarily large initial data. Let us remark that, under the same assumptions $g>0$ and $0<\sigma<2/d$, the classical semilinear parabolic equation, i.e. equation \eqref{eq:heat} with $\mu=0$, experiences a possible finite-time blow-up, see \cite[Theorem 17.6]{QuSo19} for instance. The $L^2$ constraint provided by the dynamics \eqref{eq:heat} then prevents this kind of singularity formation. 
\newline
Moreover, in the case of $\sigma\in\left(\frac{2}{d}, \frac{2}{(d-2)^+}\right)$ and $g\geq0$, it is possible to exploit some arguments borrowed from the standard potential well method (see \cite{HoRo07,PaSa75} for instance) to show global existence of solutions for some initial data. More precisely, we determine a subset of $H^1$ that is invariant for the dynamics and initial data belonging to that region emanating global solutions. On the other hand, we also identify a set of initial data whose evolution experiences a grow-up behavior, see Theorem \ref{thm:growup} below.
\newline	
Our second main goal is to investigate the asymptotic behavior for large times. As in \cite{MaCh09}, by standard compactness methods it is possible to show that along sequences of times going to infinity, the solution is converging to a stationary solution. On the other hand, when $\Omega$ is a bounded domain we can further improve this result by showing that solutions emanating from non-negative initial data converge to the ground state solution, see Theorem \ref{thm:asy} for a more precise statement.
\newline	
We now present our main results. First of all, we prove the local well-posedness of the Cauchy problem \eqref{eq:heat}. Let us remark that the usual fixed point argument, see for instance \cite[Sect. 16]{QuSo19}, cannot be applied to \eqref{eq:heat} in a straightforward way. Indeed, there is a quite delicate interplay between the power-type nonlinearity and the nonlocal term. A general proof of the local well-posedness result appears to be missing so far. For this reason, in proving Theorem \ref{thm:lwp_intro} we are going to adopt different strategies. We refer to the beginning of Section \ref{sec:exis} for a more detailed discussion.
\newline
 We introduce the following conditions on $\sigma$
 \begin{equation}\label{eq:sigmaCon}
 0 < \sigma < \frac{1}{(d-2)^+} \quad \mbox{or} \quad \frac{1}{2} \leq \sigma < \frac{2}{(d-2)^+}.
 \end{equation}

\begin{theorem}\label{thm:lwp_intro}
Let $\Omega$ be a bounded domain with $C^2$ boundary and $0 < \sigma < \frac{2}{(d-2)^+}$ or $\Omega = \R^d$ and $\sigma$ verifying the conditions in \eqref{eq:sigmaCon}. Then for any
		$ u_0 \in H^1_0(\Omega)$ there exists a maximal time of existence $T_{max} = T_{max}(\| u_0\|_{H^1}) >0$ and a unique local solution $u \in C([0,T_{max}), H^1_0(\Omega))$ to \eqref{eq:heat}. Moreover, either $T_{max} = \infty$ or $T_{max} < \infty$ and
 		\begin{equation*}
		 \lim_{t \rightarrow T_{max}} \| \nabla u(t)\|_{L^2} = \infty.
		\end{equation*}
\end{theorem}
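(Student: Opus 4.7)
The plan is to realize the solution as a fixed point of the Duhamel operator
\[
\Phi(u)(t) = e^{t\Delta}u_0 + \int_0^t e^{(t-s)\Delta}\bigl(g\,|u(s)|^{2\sigma}u(s) + \mu[u(s)]\,u(s)\bigr)\,ds
\]
on a ball of $C([0,T];H^1_0(\Omega))$, with $T = T(\|u_0\|_{H^1})$ chosen small. The case $u_0 \equiv 0$ yields the trivial solution, so assume $\lambda := \|u_0\|_{L^2} > 0$. Since $L^2$-conservation is only obtained \emph{a posteriori}, I would work in the restricted class where $\|u(t)\|_{L^2} \geq \lambda/2$, a property that propagates for short times by continuity in $L^2$; this keeps the denominator of $\mu[u]$ bounded away from zero, so that $\mu[u]$ is a well-defined scalar function of $t$.

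For the power nonlinearity, the two regimes in \eqref{eq:sigmaCon} call for different estimates. When $0 < \sigma < \frac{1}{(d-2)^+}$, the Sobolev embedding $H^1 \hookrightarrow L^{4\sigma+2}$ gives $|u|^{2\sigma}u \in L^2$ for $u \in H^1$, so the fixed point closes directly in $C([0,T];H^1_0)$ using the smoothing bound $\|e^{t\Delta}\|_{L^2 \to H^1} \lesssim t^{-1/2}$. When $\frac{1}{2} \leq \sigma < \frac{2}{(d-2)^+}$ (the range needed on $\R^d$ beyond the previous case), the nonlinearity is of class $C^1$ with derivative of size $|u|^{2\sigma}$, and I would run the fixed point in a mixed-norm space of the form $L^q((0,T);L^r)$ tuned to parabolic scaling, exploiting the $L^{r'} \to L^r$ smoothing of $e^{t\Delta}$ to recover $H^1$ regularity. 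In both cases, the mean-value bound $\bigl||z|^{2\sigma}z - |w|^{2\sigma}w\bigr| \lesssim (|z|^{2\sigma} + |w|^{2\sigma})|z-w|$ supplies the Lipschitz estimates on $u \mapsto |u|^{2\sigma}u$.

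For the nonlocal term I use $|\mu[u]| \lesssim \lambda^{-2}(\|\nabla u\|_{L^2}^2 + |g|\,\|u\|_{L^{2\sigma+2}}^{2\sigma+2})$, the last factor being controlled via Gagliardo-Nirenberg by a power of $\|u\|_{H^1}$. The crucial Lipschitz estimate relies on the polarization identity
\[
\|\nabla u\|_{L^2}^2 - \|\nabla v\|_{L^2}^2 = \langle \nabla(u-v),\nabla(u+v)\rangle_{L^2},
\]
together with an analogous factorization for the $L^{2\sigma+2}$ term via Hölder, yielding $|\mu[u]-\mu[v]| \lesssim_\lambda (\|u\|_{H^1} + \|v\|_{H^1})^{2\sigma+1}\,\|u-v\|_{H^1}$. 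Combined with $\mu[u]u - \mu[v]v = (\mu[u]-\mu[v])u + \mu[v](u-v)$, this provides a contraction estimate on $\mu[u]u$ compatible with the one for the power term. A standard Banach fixed-point argument then yields existence, and uniqueness in the full class $C([0,T];H^1_0)$ follows from a Grönwall estimate on the difference of two solutions.

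The blow-up alternative is standard: if $T_{max} < \infty$ while $\|\nabla u(t)\|_{L^2}$ remained bounded along a sequence $t_n \to T_{max}$, reapplying the local theorem from $t_n$ would extend the solution past $T_{max}$, a contradiction. The main obstacle in this plan is the whole-space regime with $\sigma \geq \frac{1}{(d-2)^+}$: there $|u|^{2\sigma}u$ no longer lies in $L^2$ for general $u \in H^1(\R^d)$, so the naive $H^1 \to H^1$ scheme breaks down, and the $C^1$ condition $\sigma \geq 1/2$ together with mixed-norm estimates is precisely what is needed to close the contraction by exploiting parabolic smoothing. This dichotomy is exactly the one reflected in \eqref{eq:sigmaCon}.
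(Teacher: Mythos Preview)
Your two contraction schemes match the paper's second and third proofs (Theorems~3.7 and~3.9) almost exactly: a direct fixed point in $C([0,T];H^1_0)$ via the parabolic smoothing $\|\nabla e^{t\Delta}f\|_{L^2}\lesssim t^{-1/2}\|f\|_{L^2}$ when $\sigma<\frac{1}{(d-2)^+}$, and a mixed-norm contraction in $L^\infty_tH^1\cap L^q_tW^{1,r}$ exploiting the $C^1$ regularity of $z\mapsto|z|^{2\sigma}z$ when $\sigma\geq\frac12$. For $\Omega=\R^d$ this is the paper's argument, and your treatment of the nonlocal term (lower bound on $\|u\|_{L^2}$, polarization for the Lipschitz estimate on $\mu$) is correct.

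There is, however, a genuine gap for bounded domains. The statement asserts the \emph{full} range $0<\sigma<\frac{2}{(d-2)^+}$ when $\Omega$ is bounded, not merely the union \eqref{eq:sigmaCon}. For $d\geq 5$ that union misses the interval $\frac{1}{d-2}\leq\sigma<\frac12$: there $|u|^{2\sigma}u\notin L^2$ for general $u\in H^1$, so your $H^1\to H^1$ scheme fails; and since $\sigma<\frac12$ the map $z\mapsto|z|^{2\sigma}z$ is not $C^1$, so the gradient-level Lipschitz estimate \eqref{eq:omega2} needed for your mixed-norm contraction also fails. The paper flags exactly this obstruction (Remark~3.10): the nonlocal term forces the distance to control $\nabla(u-v)$, which rules out the usual trick of contracting in a weaker $L^\infty_tL^2\cap L^q_tL^r$ distance on a ball of $H^1$.

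The paper closes this gap by abandoning contraction altogether and using a \emph{Schauder} fixed point (Theorem~3.1). One freezes the multiplier, solving $\partial_t v=\Delta v+g|v|^{2\sigma}v+\mu_\alpha[u]\,v$ by ordinary contraction for each given $u$; the map $u\mapsto v$ sends a ball of $C([0,T];H^1_0)$ into a bounded set of $L^\infty_tH^2\cap H^1_tL^2$ when $u_0\in H^2\cap H^1_0$, hence into a \emph{compact} set of $C([0,T];H^1_0)$ by Aubin--Lions (this is where boundedness of $\Omega$ is essential). Schauder yields a fixed point, a density argument upgrades to $u_0\in H^1_0$, and uniqueness is proved separately by an $L^2$ Gr\"onwall estimate (Proposition~3.6). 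Your plan needs this additional ingredient to cover the bounded-domain statement in dimensions $d\geq 5$.
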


As previously mentioned, in the case of $g>0$, the energy functional in \eqref{eq:en2} is indefinite, hence global well-posedness does not follow straightforwardly from available a priori bounds. In the following theorem, we present some conditions under which the solutions constructed above can be extended globally.

\begin{theorem}\label{thm:gwp_intro}
	Let the assumptions of Theorem \ref{thm:lwp_intro} hold. If we further assume $g \leq 0$ or $g \geq 0$ and $\sigma < \frac{\sigma}{2}$, then for any
	$ u_0 \in H^1_0(\Omega)$, there exists a unique global solution $u \in C([0,\infty), H^1_0(\Omega))$ to \eqref{eq:heat}.
\end{theorem}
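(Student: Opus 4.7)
The plan is to invoke the blow-up alternative from Theorem~\ref{thm:lwp_intro} and show that under either assumption on $g$ the quantity $\|\nabla u(t)\|_{L^2}$ cannot diverge in finite time, so the local solution extends globally. Two ingredients drive everything: the conservation $\|u(t)\|_{L^2}=\|u_0\|_{L^2}$, which is built into the definition of $\mu[u]$, and the monotonicity of the energy~\eqref{eq:en2}. For the latter, I would test \eqref{eq:heat} against $\partial_t u$, using $\int u\,\partial_t u=\tfrac12\tfrac{d}{dt}\|u\|^2_{L^2}=0$, to obtain the dissipation identity $\tfrac{d}{dt}E[u(t)]=-\|\partial_t u(t)\|^2_{L^2}\le 0$, hence $E[u(t)]\le E[u_0]$ on $[0,T_{max})$.

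The case $g\le 0$ is then immediate: both terms in \eqref{eq:en2} are non-negative, so $\tfrac12\|\nabla u(t)\|^2_{L^2}\le E[u(t)]\le E[u_0]$, and the blow-up alternative rules out $T_{max}<\infty$.

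For the mass-subcritical case $g\ge 0$ and $\sigma<2/d$ (which is how I read the hypothesis, there being an evident typo in the statement), I would apply the Gagliardo--Nirenberg inequality
\begin{equation*}
\|u\|^{2\sigma+2}_{L^{2\sigma+2}}\le C\,\|\nabla u\|_{L^2}^{d\sigma}\,\|u\|_{L^2}^{2+\sigma(2-d)},
\end{equation*}
valid both in $H^1_0$ of a smooth bounded domain and in $H^1(\R^d)$. Plugging this into $E[u(t)]\le E[u_0]$ together with $L^2$ conservation yields
\begin{equation*}
\tfrac12\|\nabla u(t)\|^2_{L^2}\le E[u_0]+\tfrac{gC}{2\sigma+2}\,\|\nabla u(t)\|_{L^2}^{d\sigma}\,\|u_0\|_{L^2}^{2+\sigma(2-d)}.
\end{equation*}
Since $d\sigma<2$, a Young-type absorption of the right-hand term into the left-hand side yields an a priori bound on $\|\nabla u(t)\|_{L^2}$ depending only on $g$, $\sigma$, $d$, $\|u_0\|_{L^2}$ and $E[u_0]$. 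The blow-up alternative again forces $T_{max}=\infty$.

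The main obstacle, in my view, is the rigorous justification of the energy identity at the regularity $u\in C([0,T_{max}),H^1_0(\Omega))$ guaranteed by Theorem~\ref{thm:lwp_intro}: writing $\tfrac{d}{dt}E[u]=-\|\partial_t u\|^2_{L^2}$ involves integrations by parts that tacitly presume $u(t)\in H^2$ and time-differentiability. I would handle this by a standard approximation: either smoothing $u_0$ so that parabolic regularity delivers an $H^2$ solution on which the identity is unambiguous and then passing to the limit via the continuous dependence underlying Theorem~\ref{thm:lwp_intro}, or by invoking the parabolic smoothing intrinsic to \eqref{eq:heat} to show $u(t)\in H^2$ for $t>0$, so that the identity holds on $(0,T_{max})$ and the inequality $E[u(t)]\le E[u_0]$ extends to $t=0$ by $H^1$-continuity.
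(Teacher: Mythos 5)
Your proposal is correct and follows essentially the same route as the paper: mass conservation plus the dissipation identity $E[u(t)]=E[u_0]-\int_0^t\|\partial_s u\|_{L^2}^2\,ds$ (which the paper justifies exactly as in your second alternative, via the parabolic smoothing of Proposition \ref{prop:upgrade}), then the direct energy bound for $g\le 0$ and Gagliardo--Nirenberg with $d\sigma<2$ absorption for $g>0$, closed by the blow-up alternative. You also correctly read the hypothesis ``$\sigma<\frac{\sigma}{2}$'' as the typo for $\sigma<\frac{2}{d}$, matching the statement proved in Subsection \ref{ss:gwp}.
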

	
The next theorem will provide additional conditions under which the solution is global. To state it, let us define 
\begin{equation*}
	\mathcal W = \left\{ u \in H^1_0(\Omega)\, ;\, E[u] < p, \, I[u] > 0 \right\},
\end{equation*}
where 
\begin{equation*}
I[u] = \| \nabla u\|_{L^2}^2 - g\| u\|_{L^{2\sigma + 2}}^{2\sigma+2}, \quad 	p = \inf \left\{E[u]\, ; \, u \in H^1_0(\Omega) \setminus \{ 0\}, I[u] = 0 \right\},
\end{equation*}
and $\Omega$ is a bounded domain. Since $\Omega$ is a bounded domain, $\mathcal W$ is a nonempty set, see Section \ref{sec:potwel} and \ref{ss:gwp} for more details. In the spirit of the potential well method, see \cite{PaSa75} for instance, we have the following global existence result.

\begin{theorem}\label{thm:pot1}
	Assume $0 < \sigma < \frac{2}{(d-2)^+}$ and $g > 0$. If $u_0 \in \mathcal W$, then the corresponding solution $u$ to \eqref{eq:heat} is global in time.
\end{theorem}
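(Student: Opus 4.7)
The plan is to carry out the standard potential well argument à la Payne--Sattinger, adapted to the constrained flow \eqref{eq:heat}. First I would establish the energy dissipation identity. Testing \eqref{eq:heat} formally against $-\Delta u - g|u|^{2\sigma}u$, which by the equation equals $-\partial_t u + \mu[u] u$, and using that $\frac{d}{dt}\|u\|_{L^2}^2 \equiv 0$ kills the $\mu[u]$ contribution, one obtains
\begin{equation*}
\frac{d}{dt} E[u(t)] = - \|\partial_t u(t)\|_{L^2}^2 \le 0.
\end{equation*}
In particular $E[u(t)] \le E[u_0] < p$ for all $t \in [0, T_{max})$.

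Next I would show invariance of $\mathcal W$ under the flow. Since $E$ and $I$ are continuous on $H^1_0(\Omega)$ (using Sobolev embedding, as $2\sigma+2$ is subcritical in the range considered), the set $\mathcal W$ is open. Suppose by contradiction that $u$ leaves $\mathcal W$; then there is a first exit time $t^* > 0$ with $u(t^*) \in \partial \mathcal W$. Either $E[u(t^*)] = p$, which contradicts the energy inequality above, or $I[u(t^*)] = 0$. In the latter case, $L^2$-conservation gives $\|u(t^*)\|_{L^2} = \|u_0\|_{L^2} > 0$, so $u(t^*) \ne 0$, and the definition of $p$ forces $E[u(t^*)] \ge p$, again a contradiction. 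Hence $u(t) \in \mathcal W$ on the whole interval $[0, T_{max})$.

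The a priori $H^1$ bound then comes from the algebraic decomposition
\begin{equation*}
E[u] = \frac{\sigma}{2\sigma + 2} \|\nabla u\|_{L^2}^2 + \frac{1}{2\sigma + 2} I[u],
\end{equation*}
which, combined with $I[u(t)] \ge 0$ on $\mathcal W$ and $E[u(t)] \le E[u_0]$, yields
\begin{equation*}
\|\nabla u(t)\|_{L^2}^2 \le \frac{2\sigma+2}{\sigma}\, E[u_0], \qquad t \in [0, T_{max}).
\end{equation*}
The blow-up alternative in Theorem \ref{thm:lwp_intro} then forces $T_{max} = \infty$.

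The main obstacle I foresee is the rigorous justification of the dissipation identity, since the equation is only $H^1$-valued a priori; testing against $-\Delta u - g|u|^{2\sigma}u$ requires additional parabolic regularity (roughly $\partial_t u, \Delta u \in L^2_{loc}((0, T_{max}); L^2)$). I expect to recover this regularity either from the smoothing estimates used to construct the solution in Theorem \ref{thm:lwp_intro}, or by an approximation/mollification argument followed by passage to the limit. The remaining steps are either algebraic or rely on the continuity of $E$ and $I$ on $H^1_0(\Omega)$.
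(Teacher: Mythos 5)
Your proposal is correct and follows essentially the same route as the paper: energy dissipation plus $L^2$-conservation, invariance of $\mathcal W$ via continuity and the definition of $p$, the bound $\frac{\sigma}{2\sigma+2}\|\nabla u(t)\|_{L^2}^2 \le E[u(t)] \le E[u_0] < p$ from $I[u(t)]\ge 0$, and the blow-up alternative. The regularity issue you flag for the dissipation identity is exactly what the paper's Proposition \ref{prop:upgrade} (equation \eqref{eq:derEn}) supplies, so your argument closes with no gap.
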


The potential well method can also be used in the whole Euclidean space $\R^d$ to obtain additional sufficient conditions for global existence, as in \cite{HoRo07}. To state the next theorem, we introduce the following notations. Let $\frac{2}{d} < \sigma < \frac{2}{(d-2)^+}$ and let $Q$ be the unique positive solution to
\begin{equation*}
 \Delta Q - Q + |Q|^{2\sigma}Q = 0.
\end{equation*}
Let $\mathcal K$ be defined as 
\begin{equation*}
	\mathcal K = \{ f \in H^1(\R^d): \, E[f] \| f\|_{L^2}^{2\alpha} < E[Q] \| Q\|_{L^2}^{2\alpha}, \, \|\nabla f\|_{L^2}^2 \| f\|_{L^2}^{2\alpha} < \|\nabla Q\|_{L^2}^2 \| Q\|_{L^2}^{2\alpha} \},
\end{equation*}
where
\begin{equation*}
	\alpha = \frac{2 - (d - 2)\sigma}{d\sigma - 2}.
\end{equation*}
Once again, we note that the above set is nonempty (see Section \ref{ss:gwp}).
Then the following holds.

\begin{theorem}\label{thm:potK}
 Assume $\frac{2}{d} < \sigma < \frac{2}{(d-2)^+}$ and $g > 0$. If $u_0 \in \mathcal K$, then the corresponding solution $u$ to \eqref{eq:heat} is global in time.
\end{theorem}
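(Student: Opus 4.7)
The plan is to invoke the blow-up alternative of Theorem \ref{thm:lwp_intro}: it suffices to prove that $\|\nabla u(t)\|_{L^2}$ stays bounded on every compact subinterval of $[0,T_{\max})$. Two structural properties of \eqref{eq:heat} drive the argument. First, the multiplier $\mu[u]$ is engineered so that $\|u(t)\|_{L^2}=\|u_0\|_{L^2}$ for all $t$. Second, \eqref{eq:heat} is the $L^2$-projected gradient flow for $E$, so testing with $\partial_tu$ yields $\tfrac{d}{dt}E[u(t)]=-\|\partial_tu(t)\|_{L^2}^2\le 0$. Combining these,
\begin{equation*}
 E[u(t)]\,\|u(t)\|_{L^2}^{2\alpha}\le E[u_0]\,\|u_0\|_{L^2}^{2\alpha}\qquad\forall\,t\in[0,T_{\max}).
\end{equation*}

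The second ingredient is the sharp Gagliardo--Nirenberg inequality, valid in the $L^2$-supercritical, $H^1$-subcritical range $\tfrac{2}{d}<\sigma<\tfrac{2}{(d-2)^+}$:
\begin{equation*}
 \|f\|_{L^{2\sigma+2}}^{2\sigma+2}\le C_{\mathrm{GN}}\,\|\nabla f\|_{L^2}^{d\sigma}\,\|f\|_{L^2}^{2+(2-d)\sigma},
\end{equation*}
whose optimal constant is attained at $Q$ (Weinstein) and computable from the Pohozaev identities for $Q$. Plugging this inequality into $E[u]$ and multiplying by $\|u\|_{L^2}^{2\alpha}$, with $\alpha$ the unique exponent that makes both terms homogeneous in the quantity $X:=\|\nabla u\|_{L^2}^2\,\|u\|_{L^2}^{2\alpha}$ (precisely the choice in the statement), one obtains
\begin{equation*}
 E[u(t)]\,\|u(t)\|_{L^2}^{2\alpha}\ge F(X(t)),\qquad F(x)=\tfrac{1}{2}\,x-c_g\,x^{d\sigma/2},
\end{equation*}
for a positive constant $c_g=c_g(g,\sigma,d,C_{\mathrm{GN}})$. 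Since $d\sigma/2>1$, $F$ is strictly concave on $(0,\infty)$ and attains its unique maximum at some $X_*>0$; a direct computation using the two Pohozaev identities translates this critical point into $X_*=\|\nabla Q\|_{L^2}^2\,\|Q\|_{L^2}^{2\alpha}=:X_Q$ with maximal value $F(X_Q)=E[Q]\,\|Q\|_{L^2}^{2\alpha}=:\Lambda$.

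The conclusion follows from a standard continuity argument. The hypothesis $u_0\in\mathcal K$ reads exactly $X(0)<X_Q$ and $E[u_0]\,\|u_0\|_{L^2}^{2\alpha}<\Lambda$, so combining with the monotonicity displayed above,
\begin{equation*}
 F(X(t))\le E[u(t)]\,\|u(t)\|_{L^2}^{2\alpha}\le E[u_0]\,\|u_0\|_{L^2}^{2\alpha}<\Lambda=F(X_Q),\qquad t\in[0,T_{\max}).
\end{equation*}
Since $F$ attains $\Lambda$ only at $X_Q$, the continuous map $t\mapsto X(t)$ cannot reach $X_Q$; together with $X(0)<X_Q$ this forces $X(t)<X_Q$ on the whole interval of existence. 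Because $\|u(t)\|_{L^2}=\|u_0\|_{L^2}$, this yields a uniform bound on $\|\nabla u(t)\|_{L^2}$, and the blow-up alternative gives $T_{\max}=\infty$.

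The only delicate point is the rigorous justification of the energy dissipation inequality for the $H^1$-valued solutions provided by Theorem \ref{thm:lwp_intro}: formally it follows from multiplying \eqref{eq:heat} by $\partial_tu$, but this requires $\partial_tu\in L^2$, which should already have been established while proving the a priori bounds in Section \ref{sec:exis}, or otherwise obtained by parabolic smoothing together with approximation by more regular initial data and passage to the limit. The remainder is the by-now-standard Holmer--Roudenko dichotomy of \cite{HoRo07} transported from focusing NLS to our gradient flow, plus the algebraic identification of the critical values $X_Q$ and $\Lambda$ with the quantities appearing in the definition of $\mathcal K$.
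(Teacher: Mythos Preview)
Your proposal is correct and follows essentially the same route as the paper: both apply the sharp Gagliardo--Nirenberg inequality to bound $E[u]\|u\|_{L^2}^{2\alpha}$ from below by a one-variable function of $\|\nabla u\|_{L^2}\|u\|_{L^2}^{\alpha}$, identify its maximum with the ground-state quantities via the Pohozaev identities, and then run the continuity/trapping argument of \cite{HoRo07} together with energy decay and mass conservation to keep $\|\nabla u(t)\|_{L^2}$ below the threshold, concluding by the blow-up alternative. The ``delicate point'' you flag about the energy dissipation identity is handled in the paper by Proposition~\ref{prop:upgrade}, which upgrades mild solutions to strong ones and gives \eqref{eq:derEn}; you may simply cite that.
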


While it is interesting to further extend the range of models for which global well-posedness holds, another main open problem is to study the possible occurrence of a finite-time blow-up in some specific regimes.
Let us recall that the usual arguments for nonlinear parabolic equations (see for instance \cite[Theorem 17.1]{QuSo19}) do not apply here, due to the conservation of the $L^2$-norm. 
On the other hand, our next result shows a grow-up scenario for some data, in the case of $g \geq 0$ and $\sigma \geq \frac{2}{d}$. Notice that such conditions complement the hypothesis of Theorem~\ref{thm:gwp_intro} where global existence is proven for any data.
\begin{theorem}\label{thm:growup}
	 Let $g \geq 0 $ and $\sigma \geq \frac{2}{d}$. Let $u_0 \in H^1(\R^d)$ be such that $E[u_0] < 0$. Let $T_{max} (u_0) > 0$ be the maximum time of existence of the corresponding strong solution to \eqref{eq:heat}. Then either $T_{max}(u_0) < \infty$ (and $\lim_{t \rightarrow T_{max}} \| \nabla u(t)\|_{L^2} = \infty$) or  $T_{max}(u_0) = \infty$  and
	 \begin{equation*}
	 \limsup_{t \rightarrow \infty} \| \nabla u(t)\|_{L^2} = \infty.
	 \end{equation*}
	\end{theorem}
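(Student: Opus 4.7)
The plan is to argue by contradiction: suppose $T_{max}=\infty$ and $\sup_{t\ge 0}\|\nabla u(t)\|_{L^2}\le M<\infty$, and derive a contradiction with $E[u_0]<0$. Since $\|u(t)\|_{L^2}\equiv\|u_0\|_{L^2}$ and $\|\nabla u(t)\|_{L^2}$ is bounded, Gagliardo--Nirenberg controls $\|u(t)\|_{L^{2\sigma+2}}$ uniformly, so $E[u(t)]$ is bounded. Testing the equation with $\partial_t u$ and using $\langle u,\partial_t u\rangle_{L^2}=0$ yields $\tfrac{d}{dt}E[u(t)]=-\|\partial_t u(t)\|_{L^2}^2$, hence $E[u(t)]\searrow E_\infty\in\R$ and $\int_0^\infty\|\partial_s u(s)\|_{L^2}^2\,ds=E[u_0]-E_\infty<\infty$. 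I pick $t_n\to\infty$ with $\|\partial_t u(t_n)\|_{L^2}\to 0$ and set $v_n:=u(t_n)$. Then $\{v_n\}$ is bounded in $H^1(\R^d)$, $E[v_n]\le E[u_0]<0$, and
\[
\Delta v_n+g|v_n|^{2\sigma}v_n+\mu_n v_n\to 0\quad\text{in }L^2(\R^d),
\]
where $\mu_n:=\mu[v_n]$. The identity $I[u]=2(\sigma+1)E[u]-\sigma\|\nabla u\|_{L^2}^2$ together with $E[v_n]\le E[u_0]$ forces $-\mu_n\|u_0\|_{L^2}^2\ge 2(\sigma+1)|E[u_0]|$, so along a subsequence $\mu_n\to\mu_\infty<0$.

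To extract a nontrivial limit on $\R^d$ I would invoke a concentration--compactness argument. The energy bound $E[v_n]\le E[u_0]<0$ gives $\|v_n\|_{L^{2\sigma+2}}^{2\sigma+2}\ge 2(\sigma+1)|E[u_0]|/g$, so Lions' vanishing lemma prevents dispersion. Using G\'erard's $H^1(\R^d)$ profile decomposition I can write $v_n=\sum_{j=1}^J U^{(j)}(\cdot-y_n^{(j)})+w_n^{(J)}$ with pairwise diverging translations $(y_n^{(j)})$, profiles $U^{(j)}\in H^1(\R^d)$, asymptotic orthogonality of the $L^2$, $\dot H^1$ and $L^{2\sigma+2}$ norms, and $\limsup_n\|w_n^{(J)}\|_{L^{2\sigma+2}}\to 0$ as $J\to\infty$. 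Translating the equation by $y_n^{(j)}$ and passing to the limit, with the help of the local compactness $H^1_{\mathrm{loc}}\hookrightarrow L^{2\sigma+2}_{\mathrm{loc}}$ (Rellich, valid since $\sigma<2/(d-2)^+$), shows that each nontrivial profile $U^{(j)}$ is an $H^1(\R^d)$ solution of the stationary equation
\[
\Delta U^{(j)}+g|U^{(j)}|^{2\sigma}U^{(j)}+\mu_\infty U^{(j)}=0.
\]

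Finally, the Pohozaev identity applied to this elliptic equation yields $\|\nabla U^{(j)}\|_{L^2}^2=\tfrac{d\sigma g}{2(\sigma+1)}\|U^{(j)}\|_{L^{2\sigma+2}}^{2\sigma+2}$, so
\[
E[U^{(j)}]=\frac{g}{2\sigma+2}\|U^{(j)}\|_{L^{2\sigma+2}}^{2\sigma+2}\left(\frac{d\sigma}{2}-1\right)\ge 0
\]
precisely because $\sigma\ge 2/d$ and $g>0$. The orthogonality in the profile decomposition then gives $E[v_n]=\sum_j E[U^{(j)}]+\tfrac12\|\nabla w_n^{(J)}\|_{L^2}^2+o_{n,J}(1)$, and letting first $n\to\infty$ and then $J\to\infty$ produces $\liminf_n E[v_n]\ge 0$, contradicting $E[v_n]\le E[u_0]<0$. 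Combined with the blow-up alternative in Theorem \ref{thm:lwp_intro}, this rules out a global solution with bounded $\|\nabla u\|_{L^2}$. The main obstacle is the loss of compactness on $\R^d$, which prevents transferring the negative energy to a weak limit directly: G\'erard's profile decomposition combined with the Pohozaev rigidity (that any nontrivial $H^1$ critical point has nonnegative energy when $\sigma\ge 2/d$ and $g>0$) is the decisive ingredient.
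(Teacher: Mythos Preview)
Your argument is correct and shares with the paper the same contradiction scheme: assume global existence with $\sup_{t\ge 0}\|\nabla u(t)\|_{L^2}<\infty$, use the energy identity to obtain $\partial_t u\in L^2((0,\infty),L^2)$, extract times $t_n\to\infty$ along which $u(t_n)$ is an approximate stationary state, and contradict $E[u_0]<0$ via the Pohozaev identity $E[Q]=\tfrac{d\sigma-2}{2d\sigma}\|\nabla Q\|_{L^2}^2\ge 0$, valid for every $H^1(\R^d)$ solution of the limiting elliptic problem when $\sigma\ge 2/d$. The divergence is in how the limit is taken. The paper passes directly to a single weak $H^1$ limit $u_\infty$, verifies it is stationary, and then appeals to ``weak lower semicontinuity'' to get $E[u_\infty]\le\lim_n E[u(t_n)]<0$. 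You instead run G\'erard's profile decomposition, show that each nontrivial profile $U^{(j)}$ solves the elliptic equation (hence $E[U^{(j)}]\ge 0$), and rebuild $\liminf_n E[v_n]\ge 0$ from the asymptotic orthogonality of the $\dot H^1$ and $L^{2\sigma+2}$ norms together with $\limsup_n\|w_n^{(J)}\|_{L^{2\sigma+2}}\to 0$. Your route is heavier but is the robust one on $\R^d$: the functional $E$ is \emph{not} weakly lower semicontinuous on $H^1(\R^d)$, since the potential term $-\tfrac{g}{2\sigma+2}\|u\|_{L^{2\sigma+2}}^{2\sigma+2}$ can leak through translation in the absence of global Rellich compactness; the paper's one-line wlsc step therefore hides exactly the difficulty that your concentration--compactness argument resolves. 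Your preliminary observations that $\mu_n\le -2(\sigma+1)|E[u_0]|/\|u_0\|_{L^2}^2<0$ (so $\mu_\infty<0$) and that $\|v_n\|_{L^{2\sigma+2}}^{2\sigma+2}\ge 2(\sigma+1)|E[u_0]|/g$ (excluding vanishing) are the right ingredients to make the profile extraction and the Pohozaev rigidity of each bubble do their work.
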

After having shown the existence of global solutions, we now turn our attention to their asymptotic behavior.
When $g=0$ in \eqref{eq:heat} and $\Omega$ is bounded, the eigenspaces associated with the Dirichlet Laplacian are invariant along the dynamics. As a byproduct, the solution asymptotically approaches the eigenspace of least energy, which contains a non-trivial component of the initial datum, see \cite{CaLi09} where this is proved by explicit calculations. 
On the other hand, allowing for $g\neq0$ leads to more complex dynamics. 
\newline
In \cite{MaCh09}, this problem is addressed in the case of $g<0$ for bounded domains. The authors prove convergence to some stationary state (no uniqueness is given) for some sequence of times going to infinity.
\newline
The next proposition extends this result to arbitrary $g\neq0$ for both $\Omega$ bounded and $\Omega=\R^d$.

\begin{prop}
Let $u \in C([0,\infty), H^1_0(\Omega))$ be the global solution constructed in Theorem \ref{thm:gwp_intro}, with initial datum $u(0)=u_0\in H^1_0(\Omega)$.
Then there exists a sequence $\{ t_n \}_{n \in \N}$, $t_n \rightarrow \infty$ as $n \rightarrow \infty$, such that 
\begin{equation*}
u(t_n) \rightharpoonup u_\infty \ \mbox{ in } \ H^1_0(\Omega) \ \ \mbox{and} \ \ \mu[u(t_n)] \rightarrow \mu[u_\infty] \ \mbox{ as } \ n\rightarrow \infty,
\end{equation*}
where $u_\infty$ solves the stationary equation
\begin{equation*}
\Delta u_\infty + g|u_\infty|^{2\sigma} u_\infty + \mu[u_\infty] u_\infty = 0
\end{equation*}
and $\| u_\infty\|_{L^2} \leq \| u_0\|_{L^2}$.
\end{prop}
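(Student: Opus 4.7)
The plan is to view \eqref{eq:heat} as a constrained gradient flow on the $L^2$-sphere $\mathcal M$ and exploit the resulting energy dissipation. First, multiplying the equation by $\partial_t u$, integrating over $\Omega$, and using the identity $\langle u,\partial_t u\rangle_{L^2}=0$ coming from $L^2$-conservation, one obtains
\begin{equation*}
\frac{d}{dt}E[u(t)]=-\|\partial_t u(t)\|_{L^2}^2.
\end{equation*}
Under the assumptions of Theorem \ref{thm:gwp_intro}, the $H^1$-norm of $u$ is uniformly bounded in time (immediate when $g\le 0$, and obtained through a Gagliardo--Nirenberg estimate followed by Young absorption when $g\ge 0$ and $\sigma<2/d$), hence $E[u(t)]$ is bounded below. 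Integrating in time yields $\int_0^\infty\|\partial_t u(t)\|_{L^2}^2\,dt<\infty$, so there exists a sequence $t_n\to\infty$ along which $\|\partial_t u(t_n)\|_{L^2}\to 0$.

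Since $\{u(t_n)\}$ is bounded in $H^1_0(\Omega)$ and $\{\mu[u(t_n)]\}$ is bounded in $\R$ (the numerator in \eqref{eq:mu} is controlled via Sobolev embedding by $\|u\|_{H^1}$, while the denominator is the fixed constant $\|u_0\|_{L^2}^2$), up to a subsequence $u(t_n)\rightharpoonup u_\infty$ in $H^1_0(\Omega)$ and $\mu[u(t_n)]\to\mu_\infty$. On a bounded $\Omega$, Rellich compactness yields strong convergence in $L^{2\sigma+2}$ since $2\sigma+2<2^*$; on $\R^d$, a further extraction provides pointwise a.e.\ convergence together with strong $L^p_{loc}$ convergence. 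In both cases $|u(t_n)|^{2\sigma}u(t_n)$ is bounded in $L^{(2\sigma+2)/(2\sigma+1)}$ and converges weakly to $|u_\infty|^{2\sigma}u_\infty$ in this space. Testing \eqref{eq:heat} against an arbitrary $\varphi\in C^\infty_c(\Omega)$ and using $\langle\partial_t u(t_n),\varphi\rangle\to 0$, we pass to the limit and conclude that $u_\infty$ solves
\begin{equation*}
\Delta u_\infty+g|u_\infty|^{2\sigma}u_\infty+\mu_\infty u_\infty=0
\end{equation*}
in the distributional, and hence strong, sense.

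To identify $\mu_\infty=\mu[u_\infty]$, I would use $u_\infty$ itself as a test function: weak $H^1$-convergence gives $\langle\nabla u(t_n),\nabla u_\infty\rangle\to\|\nabla u_\infty\|_{L^2}^2$; the a.e.\ convergence together with boundedness in $L^{2\sigma+2}$, paired against $u_\infty\in L^{2\sigma+2}$, gives $\langle|u(t_n)|^{2\sigma}u(t_n),u_\infty\rangle\to\|u_\infty\|_{L^{2\sigma+2}}^{2\sigma+2}$; and $\langle u(t_n),u_\infty\rangle\to\|u_\infty\|_{L^2}^2$ by weak $L^2$-convergence. Combined with $\langle\partial_t u(t_n),u_\infty\rangle\to 0$, this produces
\begin{equation*}
\|\nabla u_\infty\|_{L^2}^2-g\|u_\infty\|_{L^{2\sigma+2}}^{2\sigma+2}=\mu_\infty\|u_\infty\|_{L^2}^2,
\end{equation*}
which is precisely $\mu_\infty=\mu[u_\infty]$ whenever $u_\infty\neq 0$; in the degenerate case $u_\infty\equiv 0$ (which can occur only on $\R^d$ through loss of mass at infinity) the stationary equation is trivially satisfied. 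The inequality $\|u_\infty\|_{L^2}\le\|u_0\|_{L^2}$ follows from weak lower semicontinuity of the $L^2$-norm together with $L^2$-conservation along the flow.

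The main delicate step is the passage to the limit in the nonlinearity in the whole-space setting, where no global compactness of the embedding $H^1(\R^d)\hookrightarrow L^{2\sigma+2}(\R^d)$ is available. There, pointwise a.e.\ convergence combined with boundedness in $L^{2\sigma+2}$ is what justifies the weak convergence of $|u(t_n)|^{2\sigma}u(t_n)$ against both $u_\infty$ and compactly supported test functions; once this is secured, all remaining terms pass to the limit by standard weak-convergence arguments.
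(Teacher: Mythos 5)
Your argument is correct and follows essentially the same route as the paper's proof: energy dissipation gives $\partial_t u\in L^2([0,\infty),L^2)$, a sequence $t_n\to\infty$ with $\partial_t u(t_n)\to 0$ in $L^2$ is extracted, weak $H^1$ compactness and passage to the limit in the equation yield the stationary problem, and $\mu_\infty=\mu[u_\infty]$ is identified by pairing with $u_\infty$. The additional care you take with the nonlinear term on $\R^d$ (a.e.\ convergence plus boundedness in $L^{(2\sigma+2)/(2\sigma+1)}$) and with the degenerate case $u_\infty\equiv 0$ only fills in details the paper leaves implicit.
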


Let us emphasize that, in the previous proposition, we only obtain the upper bound $\| u_\infty\|_{L^2} \leq \|u_0\|_{L^2}$ instead of the equality, because of a possible loss of mass at infinity (when $\Omega=\R^d$). This result may be improved when considering a bounded domain. By exploiting further compactness properties, available in this case, it is possible to show the strong convergence of the sequence.
\newline
Moreover, in the case when $g\leq0$, $0<\sigma<\frac{2}{(d-2)^+}$ or $g>0$, $0<\sigma<2/d$, it is well known (see Section \ref{ss:stst}) that there exist regular domains $\Omega \subset \R^d$ for which there exists a unique, positive, radially symmetric stationary solution, that is also a minimizer of the energy functional \eqref{eq:en2} under a constraint on the total mass:
\begin{equation}\label{eq:minpr}
		e_0= \inf \left\{E[u]\,: u \in H^1_0(\Omega), \, \| u\|_{L^2} = \|u_0\|_{L^2} \right\}.
 	\end{equation}
 Such a solution is usually called the ground state solution. Notice that a minimizer of the problem \eqref{eq:minpr} satisfies the elliptic equation
 	\begin{equation}\label{eq:st_st1}
		0 = \Delta Q + gQ^{2\sigma+1} + \mu[Q] Q,
	\end{equation}
 where $\mu[Q]$ is a Lagrange multiplier. 
 The maximum principle then ensures that global solutions emanated by non-negative initial data converge asymptotically to the ground state.

\begin{theorem}\label{thm:asy}
		Let $g \leq 0$, $\sigma < \frac{2}{(d-2)^+}$ or $g > 0$, $\sigma < \frac{2}{d}$. Let $u_0 \in H^1_0(\Omega)$, $u_0 \geq 0$, $u_0 \centernot{\equiv} 0$ and let $u \in C([0,\infty), H^1_0(\Omega))$ be the solution to \eqref{eq:heat}. Let $\Omega \subset \R^d$ be a bounded domain such that there exists a unique positive solution $Q \in H^1_0(\Omega)$ to equation \eqref{eq:st_st1} which is also the unique minimizer of the problem \eqref{eq:minpr}. Then 
		\begin{equation*}
			u(t) \rightarrow Q \ \mbox{ in } \ H^1_0(\Omega)\ \mbox{ as } \ t\rightarrow \infty.
		\end{equation*}
	\end{theorem}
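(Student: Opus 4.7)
The plan is to combine the preceding Proposition with the monotonicity of the energy along the flow, the preservation of positivity, the compactness of the Sobolev embedding on the bounded domain $\Omega$, and the uniqueness of the positive minimizer, in order to first identify any subsequential limit as $Q$ and then upgrade the convergence from weak to strong along the whole trajectory.

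First I would establish $u(t)>0$ in $\Omega$ for every $t>0$. Rewriting \eqref{eq:heat} as $\partial_t u-\Delta u=(g|u|^{2\sigma}+\mu[u(t)])u$, the zero-order coefficient is locally bounded in space–time because $u\in C([0,\infty),H^1_0(\Omega))$ and $\mu[u(t)]$ is a continuous scalar, so the parabolic weak and strong maximum principles yield $u(t)>0$ in $\Omega$ as soon as $u_0\geq 0$, $u_0\not\equiv 0$. In parallel, testing \eqref{eq:heat} against $\partial_t u$ and using $L^2$-conservation gives
\begin{equation*}
\frac{d}{dt}E[u(t)]=-\|\partial_t u(t)\|_{L^2}^2\leq 0.
\end{equation*}
Under our assumptions the energy is bounded below on the $L^2$-sphere of radius $\|u_0\|_{L^2}$ (trivially when $g\leq 0$ and by Gagliardo--Nirenberg when $g>0$, $\sigma<2/d$), so $E[u(t)]$ decreases to some $E_\infty\geq e_0$ and $\|u(t)\|_{H^1}$ stays uniformly bounded.

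Next, I would apply the preceding Proposition along a sequence $t_n\to\infty$ to get $u(t_n)\rightharpoonup u_\infty$ in $H^1_0(\Omega)$ with $\mu[u(t_n)]\to \mu[u_\infty]$ and $u_\infty$ a solution of the stationary equation. Since $\Omega$ is bounded and $\sigma<\tfrac{2}{(d-2)^+}$, the Rellich--Kondrachov theorem gives $u(t_n)\to u_\infty$ strongly in $L^2(\Omega)\cap L^{2\sigma+2}(\Omega)$, hence $\|u_\infty\|_{L^2}=\|u_0\|_{L^2}$ and, by the preserved positivity, $u_\infty\geq 0$, $u_\infty\not\equiv 0$. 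The uniqueness hypothesis on $\Omega$ then forces $u_\infty=Q$. Rewriting \eqref{eq:mu} as $\|\nabla u(t_n)\|_{L^2}^2=\mu[u(t_n)]\|u(t_n)\|_{L^2}^2+g\|u(t_n)\|_{L^{2\sigma+2}}^{2\sigma+2}$ and passing to the limit shows $\|\nabla u(t_n)\|_{L^2}^2\to\|\nabla Q\|_{L^2}^2$, so the weak convergence upgrades to strong $H^1_0$-convergence and $E[u(t_n)]\to E[Q]=e_0$; monotonicity then yields $E[u(t)]\to e_0$ along the entire trajectory.

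For the last step I would promote this to convergence along every sequence $s_n\to\infty$. By the uniform $H^1$-bound, extract $u(s_{n_k})\rightharpoonup w$ in $H^1_0(\Omega)$; compact embedding gives $u(s_{n_k})\to w$ in $L^{2\sigma+2}(\Omega)$, so $\|w\|_{L^2}=\|u_0\|_{L^2}$ and $w\geq 0$. Combining $E[u(s_{n_k})]\to e_0$ with the strong $L^{2\sigma+2}$ convergence of the nonlinear term and the weak lower semicontinuity of the Dirichlet integral yields $E[w]\leq e_0$; since $e_0$ is the infimum in \eqref{eq:minpr} and $w$ lies on the constraint, necessarily $E[w]=e_0$, so $w$ is a minimizer of \eqref{eq:minpr}, whence $w=Q$ by the uniqueness assumption. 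Equality of the energies then forces $\|\nabla u(s_{n_k})\|_{L^2}^2\to\|\nabla Q\|_{L^2}^2$, yielding strong $H^1$-convergence along this sub-subsequence. As every $s_n\to\infty$ thus admits a sub-subsequence converging strongly to $Q$, we conclude $u(t)\to Q$ in $H^1_0(\Omega)$ as $t\to\infty$. The main obstacle will be the identification of $u_\infty$: it is essential to combine positivity preservation, the exact conservation of the $L^2$-norm (rescued in the limit by Rellich--Kondrachov on the bounded domain), and the uniqueness hypothesis for positive solutions of \eqref{eq:st_st1}; the compact embedding $H^1_0(\Omega)\hookrightarrow L^{2\sigma+2}(\Omega)$ then does double duty, both preserving mass in the weak limit and converting the nonlinear part of the energy into a continuous functional, which is ultimately what allows the weak-to-strong upgrade. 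Absent this compactness, as on $\R^d$, mass could escape to infinity and the present argument would break down.
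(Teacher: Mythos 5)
Your proposal is correct and follows essentially the same route as the paper: maximum principle to preserve positivity, the subsequential convergence result upgraded to strong $H^1$ convergence via Rellich--Kondrachov on the bounded domain, identification of the limit with $Q$ through the uniqueness hypothesis, monotonicity of the energy to get $E[u(t)]\to E[Q]$ along the whole trajectory, and finally the weak-lower-semicontinuity/compact-embedding argument to upgrade to full-trajectory strong convergence. The only cosmetic difference is that you phrase the last step as a sub-subsequence argument, whereas the paper argues by contradiction with a sequence staying $\varepsilon$-away from $Q$; the underlying mechanism is identical.
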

	
The problem we address in this work can also be looked at as an example of a bilinear control system. Indeed, consider the nonlinear heat equation
\begin{equation}\label{eq:bilinear}
    \partial_t u = \Delta u + g |u|^{2\sigma} u + p(t)q(x) u 
\end{equation}
where $q$ is a given smooth function defined on $\Omega$ and $p\in L^2(0,T)$ is a scalar function of our choice. The problem consists of finding, for any initial condition $u_0$, a control $p$ which steers the solution to a given target at time $T$. In \cite{BeMa20}, this question was addressed for $d=1$, $g = 0$, with the ground state solution as a target, proving an exact controllability result in any time $T > 0$. This result was later extended, in \cite{AlCaUr22}, to arbitrary eigensolutions of evolution equations of parabolic type in one space dimension, under various boundary conditions. Notice that, in both \cite{BeMa20} and \cite{AlCaUr22}, $q$ is not allowed to be the identity. 
\newline
On the other hand, the approach proposed in \cite{CaLi09} applies to the above problem in arbitrary dimension, with $q(x)\equiv 1$ and $g=0$, by choosing $p(t)=\mu[u(t)]$. In \cite{CaLi09}, however, the target is attained asymptotically (as $T\to\infty$) also by an appropriate selection of the initial condition. So, in this work, we extend the approach by \cite{CaLi09} to nonlinear flows $(g\neq 0)$ restricting the target to the ground state (for $u_0\geq 0$).
Let us mention that in \cite{BeDuCo20}, the authors studied the well-posedness of \eqref{eq:heat} via the fixed point argument, similarly to Theorem \ref{thm:lwp} below. They also showed that solutions starting close to a local minimum of the energy at fixed mass converge to it asymptotically. 
\newline
Finally, let us also mention that a similar analysis can be also performed on other models, such as the Navier-Stokes equations, see for instance \cite{CaPuRo09, brzezniak20182d} where the two-dimensional system is studied under various constraints. 
\newline
This work is organized as follows: in Section \ref{sec:prel}, we will present some useful preliminary results regarding the heat semigroup generated by the Laplacian on $L^2(\Omega)$, the stationary states of system \eqref{eq:heat}, and the potential well method. In Section \ref{sec:exis}, we will prove our main results (Theorem\ref{thm:lwp_intro} up to \ref{thm:potK}) regarding the local and global existence of solutions to the system \eqref{eq:heat}. In Section \ref{sec:asy}, we will study the asymptotic behavior and give the proof of Theorem \ref{thm:asy} and Theorem \ref{thm:growup}.

\section{Preliminaries}\label{sec:prel}

\subsection{On the notion of solutions}

In this subsection, we introduce the different notions of solutions that are going to be used in this work. Here, $\Omega$ can be either a bounded domain with $C^2$ boundary or the Euclidean space $\Omega = \R^d$.

\begin{defin} 
Let $u_0 \in H^1_0(\Omega).$ We give the following three definitions.
\begin{enumerate}[a)]
 \item {\bf Strong solution:} We define $u$ to be a strong solution if there exists $T>0$ such that $u\in H^1([0,T],L^2(\Omega)) \cap L^2([0,T],{H^2}(\Omega)) \cap C([0,T],H^1_0(\Omega)) $ satisfies the equation \eqref{eq:heat} a.e. in $\Omega$ for any $t\in [0,T]$. 
 \item {\bf Mild solution:} 
 We define $u$ to be a mild solution if there exists $T>0$ such that $u\in C([0,T],H^1_0(\Omega))$ and for any $t \in [0,T]$, $u$ satisfies the integral formulation 
\begin{equation*}
 u(t) = e^{t\Delta} u_0 + \int_0^t e^{(t-s)\Delta} (g |u(s)|^{2\sigma} u(s) + \mu[u(s)]u(s)) ds,
\end{equation*}
where $e^{t\Delta}$ is the Dirichlet heat semigroup defined in Section \ref{sec:semig}.
\item {\bf Weak solution:} We define $u$ to be a weak solution if there exists $T>0$ such that $u\in L^2([0,T],H^1_0(\Omega))$ and for any $t \in [0,T]$, $u$ satisfies 
\begin{equation*}
 \frac{d}{dt} (\phi, u(t)) = (\Delta \phi, u) + (\phi, g|u|^{2\sigma} u + \mu[u] u), 
\end{equation*}
for any $\phi \in H^2(\bar{\Omega}) \cap H^1_0(\Omega)$ and $(\phi, u(t)) $ is absolutely continuous in time.
\end{enumerate}
\end{defin}

The following result states that any mild solution is a strong solution, see \cite[Example 51.10, Example 51.28, Lemma 17.5]{QuSo19}.

\begin{prop}\label{prop:upgrade}
 Let $\sigma < \frac{2}{(d - 2)^+}$, $u_0 \in H^1_0(\Omega)$ and $u\in C([0,T),H^1_0(\Omega)) $ be the corresponding mild solution to \eqref{eq:heat}. Then the energy 
 \begin{equation*}
 E[u(t)] = \int_\Omega \frac{1}{2} |\nabla u(t)|^2 - \frac{g}{2\sigma + 2} |u|^{2\sigma + 2} dx,
 \end{equation*}
 is so that $E \in C([0,T)) \cap C^1((0,T))$ as a function of time. Moreover, $$u \in C([0,T), H^1_0(\Omega)) \cap C^1((0,T), L^2(\Omega)) \cap C((0,T), H^2(\Omega)),$$ and
 \begin{equation}\label{eq:derEn}
 E[u(t)] - E[u_0] = - \int_0^t \int_\Omega |\partial_s u(s)|^2 dxds.
 \end{equation}
\end{prop}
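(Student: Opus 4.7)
The plan is to follow the standard strategy for parabolic semilinear equations as in \cite{QuSo19}, upgraded to accommodate the nonlocal coefficient $\mu[u]$. The key observation is that the nonlinearity $N(u):=g|u|^{2\sigma}u+\mu[u]u$ is a locally Lipschitz map from $H^1_0(\Omega)$ to $L^2(\Omega)$ whenever $0<\sigma<2/(d-2)^+$. Indeed, the pure power part is controlled by the Sobolev embedding $H^1_0\hookrightarrow L^{2\sigma+2}$, which gives $|u|^{2\sigma}u\in L^2$ and local Lipschitz estimates by a standard interpolation. For the nonlocal part, since $\mu[u]$ is a real number depending continuously on $u\in H^1_0$ (again by the same embedding applied to the $L^{2\sigma+2}$-term in \eqref{eq:mu}), the map $u\mapsto \mu[u]u$ is locally Lipschitz from $H^1_0$ into $L^2$ on sets where $\|u\|_{L^2}$ is bounded below.

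Having this, I would feed the Duhamel formula into the smoothing estimates of the Dirichlet heat semigroup. For $t>0$, $e^{t\Delta}u_0\in H^2(\Omega)$, and for $s<t$ the operator $e^{(t-s)\Delta}:L^2\to H^2$ has norm $\lesssim (t-s)^{-1}$. Splitting the convolution integral and combining with the Lipschitz estimate on $N(u)$ yields, via a standard bootstrap (see \cite[Ex.~51.10, Ex.~51.28]{QuSo19}), that $u\in C((0,T),H^2(\Omega))\cap C^1((0,T),L^2(\Omega))$, while the continuity $u\in C([0,T),H^1_0)$ is already part of the definition of mild solution. In particular, $u$ is then a strong solution of \eqref{eq:heat} on $(0,T)$.

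Given this regularity, the energy identity follows by a direct differentiation. First, testing the equation against $u$ in $L^2$, the definition \eqref{eq:mu} of $\mu[u]$ gives $\frac{d}{dt}\|u\|^2_{L^2}=0$, so $\|u(t)\|_{L^2}\equiv\|u_0\|_{L^2}$ and in particular $(u(t),\partial_t u(t))_{L^2}=0$. Next, for $t\in(0,T)$ I would compute
\begin{equation*}
\frac{d}{dt}E[u(t)]=(\nabla u,\nabla\partial_t u)_{L^2}-g(|u|^{2\sigma}u,\partial_t u)_{L^2}=-\bigl(\Delta u+g|u|^{2\sigma}u,\partial_t u\bigr)_{L^2},
\end{equation*}
using $u(t)\in H^2\cap H^1_0$ to integrate by parts. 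Substituting the equation $\Delta u+g|u|^{2\sigma}u=\partial_t u-\mu[u]u$ and using the orthogonality above yields $\tfrac{d}{dt}E[u(t)]=-\|\partial_t u(t)\|^2_{L^2}$. Integrating on $[\tau,t]\subset(0,T)$ and letting $\tau\to 0^+$ (using continuity of $E$ at $t=0$, which follows from $u\in C([0,T),H^1_0)$ and the Sobolev embedding controlling the $L^{2\sigma+2}$-norm) gives \eqref{eq:derEn}, and in particular $E\in C([0,T))\cap C^1((0,T))$.

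The main technical hurdle is the bootstrap in Step~2: the nonlocal factor $\mu[u(s)]$ depends only on time but through the full $H^1$-norm of $u(s)$, so uniform-in-$s$ Lipschitz estimates on $N(u(s))$ require control of $\|u(s)\|_{H^1}$ on compact subsets of $[0,T)$, which is provided by $u\in C([0,T),H^1_0)$. Once this is in place, the semigroup smoothing and the passage to the energy identity are routine parabolic computations; the references in \cite{QuSo19} essentially cover the pure-power case, and the extra term $\mu[u]u$ only brings a scalar multiplicative factor that is harmless under the continuity of $\mu[\cdot]$.
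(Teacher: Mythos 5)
Your overall scheme (upgrade the mild solution to a strong one by semigroup smoothing, then differentiate $E$ and use conservation of the $L^2$-norm to kill the term $\mu[u](u,\partial_t u)_{L^2}$) is the natural one, and the final computation $\frac{d}{dt}E[u(t)]=-\|\partial_t u(t)\|_{L^2}^2+\mu[u(t)]\,(u(t),\partial_t u(t))_{L^2}=-\|\partial_t u(t)\|_{L^2}^2$ is exactly what lies behind \eqref{eq:derEn}. The paper itself does not write out a proof: it treats the regularity upgrade and the energy identity as consequences of the cited results in Quittner--Souplet (Examples 51.10, 51.28 and Lemma 17.5), which rely on $L^p$--$L^q$ smoothing and an interpolation/bootstrap argument valid in the whole energy-subcritical range.

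There is, however, a genuine gap in your first (and key) step. You assert that $N(u)=g|u|^{2\sigma}u+\mu[u]u$ is locally Lipschitz from $H^1_0(\Omega)$ into $L^2(\Omega)$ for every $0<\sigma<\frac{2}{(d-2)^+}$. This is false for $d\geq 3$ when $\sigma>\frac{1}{d-2}$: to have $|u|^{2\sigma}u\in L^2$ one needs $u\in L^{4\sigma+2}$, and the embedding $H^1_0(\Omega)\hookrightarrow L^{4\sigma+2}(\Omega)$ requires $4\sigma+2\leq\frac{2d}{d-2}$, i.e. $\sigma\leq\frac{1}{d-2}$; the embedding $H^1_0\hookrightarrow L^{2\sigma+2}$ that you invoke only gives $|u|^{2\sigma}u\in L^{\frac{2\sigma+2}{2\sigma+1}}$, not $L^2$. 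This is precisely the obstruction the paper is organized around: its own fixed-point proof based on the $L^2$-smoothing estimate \eqref{eq:smooth} (Theorem \ref{thm:proof1}) is restricted to $\sigma<\frac{1}{(d-2)^+}$ for exactly this reason, while the full range $\sigma<\frac{2}{(d-2)^+}$ in Proposition \ref{prop:upgrade} is reached through the $L^p$-based bootstrap of the cited references, not through an $H^1\to L^2$ Lipschitz bound. A secondary point: even on the range where your Lipschitz claim does hold, the step from $N(u(\cdot))\in L^\infty_t L^2_x$ to $u\in C((0,T),H^2)\cap C^1((0,T),L^2)$ is not immediate from $\|e^{(t-s)\Delta}\|_{L^2\to H^2}\lesssim (t-s)^{-1}$, since $\int_0^t(t-s)^{-1}ds$ diverges; one needs temporal H\"older continuity of $s\mapsto N(u(s))$ (or a genuine space bootstrap) before the analytic-semigroup regularity theorem applies. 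To repair the proof as stated you should either restrict to $\sigma<\frac{1}{(d-2)^+}$ (and $d\le 2$), or replace Step 1--2 by the $L^p$ smoothing/bootstrap argument of the quoted examples in \cite{QuSo19}, after which your energy computation, including the justification of the integration by parts and the limit $\tau\to0^+$, goes through as in Lemma 17.5 there.
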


\subsection{Heat semigroup}\label{sec:semig}


In this work, we will denote with $e^{t\Delta}$ both the semigroup generated by the Laplacian 
on $L^2(\Omega)$ and $L^2(\R^d)$. 
The function $u = e^{t\Delta} u_0$ solves the linear heat equation
\begin{equation}\label{eq:linheat}
\begin{cases}
			& \partial_t u = \Delta u, \ \mbox{in} \ \ (0,\infty) \times \Omega, \\
			& u(0) = u_0 \in H^1_0(\Omega).
		\end{cases}
\end{equation} 
with Dirichlet boundary conditions $u_{|\partial\Omega} = 0,$ if $\Omega$ is bounded. 
One can prove the following $L^p-L^q$ smoothing estimates of the heat semigroup (see \cite[Proposition 48.4]{QuSo19}):
\begin{prop}\label{prp:lplq}
 Let $\{e^{t\Delta}\}_{t >0}$ be the heat semigroup in $\R^d$ or the Dirichlet heat semigroup in $\Omega$. Then for any $1 \leq p < q \leq \infty$, $t > 0$ and $f \in L^p(\Omega)$, 
 \begin{equation*}
 \left\| e^{t\Delta} f \right\|_{L^q} \leq \frac{1}{(4\pi t)^{\frac{d}{2} \left(\frac 1 p - \frac 1 q\right)}} \| f\|_{L^p}.
 \end{equation*}
\end{prop}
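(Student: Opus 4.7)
The plan is to reduce the bound to two endpoint estimates and interpolate. Let me write $T_tf:=e^{t\Delta}f$, and treat first the case $\Omega=\R^d$, where $T_tf=G_t*f$ with $G_t(x)=(4\pi t)^{-d/2}e^{-|x|^2/(4t)}$.

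First I would establish the two endpoint bounds. The $L^p\to L^p$ contraction $\|T_tf\|_{L^p}\le\|f\|_{L^p}$ for every $p\in[1,\infty]$ follows at once from Young's convolution inequality together with $\|G_t\|_{L^1}=1$. The $L^1\to L^\infty$ bound is equally immediate: $|T_tf(x)|\le\|G_t\|_{L^\infty}\|f\|_{L^1}=(4\pi t)^{-d/2}\|f\|_{L^1}$, since the Gaussian attains its supremum $(4\pi t)^{-d/2}$ at the origin.

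Next I would invoke Riesz--Thorin interpolation between these two endpoints. Fixing $p\in[1,\infty)$ and $q\in(p,\infty]$, I parametrize the interpolation line by $\theta\in[0,1]$ with $1/p_\theta=1-\theta+\theta/p$ and $1/q_\theta=\theta/p$, and choose $\theta$ so that $(p_\theta,q_\theta)=(p,q)$; one checks $1-\theta=1/p-1/q$. Riesz--Thorin then yields
\begin{equation*}
\|T_tf\|_{L^q}\le\bigl((4\pi t)^{-d/2}\bigr)^{1-\theta}\cdot 1^{\theta}\,\|f\|_{L^p}=(4\pi t)^{-\frac{d}{2}\left(\frac1p-\frac1q\right)}\|f\|_{L^p},
\end{equation*}
which is exactly the claimed estimate. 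The cases with $q=\infty$ or $p=q$ are already covered by the endpoint bounds, and the case $p=q$ reduces to contractivity.

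For $\Omega$ a bounded $C^2$ domain with the Dirichlet semigroup, I would obtain the estimate by comparison with the free Gaussian kernel. Denoting by $p_\Omega(t,x,y)$ the Dirichlet heat kernel and extending $f$ by zero outside $\Omega$, the function $w(t,x):=(G_t\ast \tilde f)(x)-\int_\Omega p_\Omega(t,x,y)f(y)\,dy$ solves the heat equation in $(0,\infty)\times\Omega$ with nonnegative boundary values (since $G_t*\tilde f\ge 0$ on $\partial\Omega$ when $f\ge0$) and zero initial data, so the parabolic maximum principle gives $0\le p_\Omega(t,x,y)\le G_t(x-y)$; by linearity, for arbitrary $f$ one has $|T_tf|\le e^{t\Delta_{\R^d}}|\tilde f|$ pointwise. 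The $\R^d$ bound then transfers verbatim, since $\|\tilde f\|_{L^p(\R^d)}=\|f\|_{L^p(\Omega)}$ and $\|T_tf\|_{L^q(\Omega)}\le\|e^{t\Delta_{\R^d}}|\tilde f|\|_{L^q(\R^d)}$.

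The only mild subtlety is the pointwise kernel domination on the bounded domain; everything else is standard interpolation. I do not foresee a real obstacle, as both ingredients are classical.
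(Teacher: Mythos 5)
Your overall strategy (two endpoint bounds plus interpolation on $\R^d$, then Gaussian-kernel domination via the maximum principle for the Dirichlet semigroup) is sound, and the second half is fine; note the paper itself offers no proof here, citing \cite[Proposition 48.4]{QuSo19}. However, the interpolation step as written fails. You interpolate between $L^1\to L^\infty$ and $L^p\to L^p$, so the admissible pairs are $1/p_\theta=1-\theta+\theta/p$, $1/q_\theta=\theta/p$. Requiring $q_\theta=q$ forces $\theta=p/q$, and then $1/p_\theta=1-p/q+1/q$, which equals $1/p$ only when $p=1$ or $p=q$. Equivalently, along your interpolation segment one always has $1/p_\theta-1/q_\theta=1-\theta$, so matching the \emph{difference} $1/p-1/q$ (your check ``$1-\theta=1/p-1/q$'') does not make the pair $(p_\theta,q_\theta)$ equal to $(p,q)$: the segment joining $(1,\infty)$ and $(p,p)$ in the $(1/p,1/q)$-square simply does not pass through the target point for $1<p<q$. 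For the same reason the case $q=\infty$, $p>1$ is \emph{not} covered by your two endpoints, contrary to what you assert.

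The statement is of course true and the repair is short. Simplest is to skip interpolation altogether and apply Young's convolution inequality with the exponent $r$ defined by $1+\tfrac1q=\tfrac1r+\tfrac1p$: since
\begin{equation*}
\|G_t\|_{L^r}=(4\pi t)^{-\frac d2\left(1-\frac1r\right)}\,r^{-\frac{d}{2r}}\leq (4\pi t)^{-\frac d2\left(\frac1p-\frac1q\right)},
\end{equation*}
one gets $\|G_t*f\|_{L^q}\leq\|G_t\|_{L^r}\|f\|_{L^p}$ with exactly the claimed constant (indeed slightly better). Alternatively, keep Riesz--Thorin but take the second endpoint $L^s\to L^s$ with $s=1+q\bigl(1-\tfrac1p\bigr)$ (and $L^\infty\to L^\infty$ when $q=\infty$), so that the interpolation line does pass through $(p,q)$ with $1-\theta=1/p-1/q$, giving the constant $\bigl((4\pi t)^{-d/2}\bigr)^{1-\theta}$ as desired. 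With either fix, your domain-comparison argument ($0\le p_\Omega(t,x,y)\le G_t(x-y)$, hence $|e^{t\Delta_\Omega}f|\le e^{t\Delta_{\R^d}}|\tilde f|$ pointwise) transfers the bound to bounded domains exactly as you describe.
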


As a consequence of the $L^p-L^q$ smoothing property in Proposition \ref{prp:lplq}, we also obtain the following space-time estimates. 

\begin{prop}\label{prp:sptime}
 Let $\{e^{t\Delta}\}_{t >0}$ be the heat semigroup in $\R^d$ or the Dirichlet heat semigroup in $\Omega$. Let $2 \leq q,r \leq \infty$ satisfy
 \begin{equation}\label{eq:qrcon}
 \frac 2 q + \frac d r = \frac d 2
 \end{equation}
 with $(q,r,d) \neq (2,\infty,2)$.
 Then for any $f \in L^2$, we have
 \begin{equation}\label{eq:sptime1}
 \left\| e^{t\Delta} f \right\|_{L^q((0,\infty),L^r)} \lesssim \| f\|_{L^2}.
 \end{equation}
 Moreover, if $(\rho,\gamma)$ is another pair satisfying condition \eqref{eq:qrcon}, then 
 \begin{equation}\label{eq:sptime2}
 \left\| \int_0^te^{(t - s) \Delta} f(s) \ ds \right\|_{L^q((0,\infty),L^r)} \lesssim \| f\|_{L^{\rho'}((0,\infty),L^{\gamma'})},
 \end{equation}
 for any $f \in L^{\rho'}((0,\infty),L^{\gamma'})$.
\end{prop}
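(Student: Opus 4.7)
The plan is to view this as a pair of standard Strichartz-type estimates for the heat semigroup and deduce both bounds from the $L^p$--$L^q$ smoothing of Proposition~\ref{prp:lplq} via a $TT^*$ argument. For the homogeneous bound \eqref{eq:sptime1}, I would introduce the operator $Tf(t) := e^{t\Delta}f$ and observe that, by duality together with the self-adjointness of $e^{t\Delta}$ on $L^2$, the target estimate $T:L^2\to L^q_tL^r_x$ is equivalent to the boundedness of $TT^*F(t) = \int_0^\infty e^{(t+s)\Delta}F(s)\,ds$ from $L^{q'}_tL^{r'}_x$ to $L^q_tL^r_x$. Applying Proposition~\ref{prp:lplq} between $L^{r'}$ and $L^r$ yields the pointwise bound $\|e^{(t+s)\Delta}F(s)\|_{L^r}\lesssim (t+s)^{-2/q}\|F(s)\|_{L^{r'}}$, where the admissibility identity $\tfrac{2}{q}+\tfrac{d}{r}=\tfrac{d}{2}$ is precisely what converts the spatial exponent into the temporal $2/q$. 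After Minkowski in the spatial norm and the elementary bound $(t+s)^{-2/q}\leq|t-s|^{-2/q}$, the desired $L^{q'}_t\to L^q_t$ control reduces to a one-dimensional Hardy--Littlewood--Sobolev inequality, which applies as soon as $q>2$.

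For the inhomogeneous bound \eqref{eq:sptime2}, I would argue more directly: Minkowski and Proposition~\ref{prp:lplq} applied between $L^{\gamma'}$ and $L^r$ give
\[
\Bigl\|\int_0^t e^{(t-s)\Delta}f(s)\,ds\Bigr\|_{L^r_x} \lesssim \int_0^t (t-s)^{-\left(\frac{1}{q}+\frac{1}{\rho}\right)}\|f(s)\|_{L^{\gamma'}_x}\,ds,
\]
where collapsing the exponent into $\frac{1}{q}+\frac{1}{\rho}$ uses both admissibility relations for $(q,r)$ and $(\rho,\gamma)$. Since $\frac{1}{\rho'}-\frac{1}{q}=1-\bigl(\frac{1}{q}+\frac{1}{\rho}\bigr)$, Hardy--Littlewood--Sobolev in one variable then transfers this to the desired $L^{\rho'}_t\to L^q_t$ bound.

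The main obstacle is the homogeneous endpoint $q=2$, which is an admissible index for $d\geq 3$ (with $r=2d/(d-2)$) and where the HLS argument degenerates because its Riesz exponent becomes $1$. I would dispatch this case by hand: combining the Sobolev embedding $H^1_0(\Omega)\hookrightarrow L^{2d/(d-2)}(\Omega)$ with the energy identity $\int_0^\infty\|\nabla e^{t\Delta}f\|_{L^2}^2\,dt=\tfrac{1}{2}\|f\|_{L^2}^2$ yields \eqref{eq:sptime1} in this endpoint directly, without recourse to HLS. The exclusion $(q,r,d)\neq(2,\infty,2)$ in the statement is precisely to avoid the one case in which this Sobolev substitute is unavailable. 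Once both endpoints $q=2$ and $q=\infty$ (the latter being trivial from the $L^2$-contractivity of $e^{t\Delta}$) are secured, the full range of intermediate admissible pairs can be recovered by complex interpolation, and the corresponding double-endpoint inhomogeneous estimate via the Christ--Kiselev lemma when needed.
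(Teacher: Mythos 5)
The paper never actually proves this proposition: it is recorded as a standard consequence of the smoothing bounds of Proposition \ref{prp:lplq}, with the classical Strichartz-type argument left to the references, so your write-up is in fact more detailed than the source. Your homogeneous part is complete and correct: the $TT^*$ reduction, the identification of the decay rate $2/q$ from \eqref{eq:qrcon}, the bound $(t+s)^{-2/q}\leq|t-s|^{-2/q}$, one-dimensional Hardy--Littlewood--Sobolev for $2<q<\infty$, the trivial case $q=\infty$, and the $q=2$ endpoint via $\|e^{t\Delta}f\|_{L^{2d/(d-2)}}\lesssim\|\nabla e^{t\Delta}f\|_{L^2}$ combined with the dissipation identity \eqref{eq:linmass}; this also explains the exclusion $(q,r,d)\neq(2,\infty,2)$ correctly. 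The inhomogeneous computation of the kernel exponent $\tfrac1q+\tfrac1\rho$ is right, and HLS closes \eqref{eq:sptime2} whenever $\tfrac1q+\tfrac1\rho<1$, $\rho'>1$ and $q<\infty$.

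The weak point is your final sentence. The cases $\rho=\infty$ (so $\rho'=1$) and $q=\infty$ escape HLS but are easily patched: for $\rho'=1$ use Minkowski and apply \eqref{eq:sptime1} to each $e^{(t-s)\Delta}f(s)$; for $q=\infty$, $r=2$, fix $t$ and dualize against $g\in L^2$, using \eqref{eq:sptime1} on $(0,t)$ after the change of variables $s\mapsto t-s$. The genuine gap is the double endpoint $q=\rho=2$, $r=\gamma=\tfrac{2d}{d-2}$, $d\geq3$, which the statement as written includes: there HLS degenerates ($\lambda=1$) and Christ--Kiselev does not help, since it requires $\rho'<q$ and fails exactly in that case; moreover, for the heat semigroup the untruncated operator that Christ--Kiselev would compare against involves $e^{(t-s)\Delta}$ for $s>t$ and is not even defined, so that lemma is not the right tool here at all. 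The retarded double-endpoint estimate is nonetheless true and can be obtained in the same elementary spirit as your $q=2$ homogeneous argument: writing $u(t)=\int_0^te^{(t-s)\Delta}f(s)\,ds$, so that $\partial_tu-\Delta u=f$, $u(0)=0$, testing with $u$ and estimating $\int fu\leq\|f\|_{L^{2d/(d+2)}}\|u\|_{L^{2d/(d-2)}}\lesssim\|f\|_{L^{2d/(d+2)}}\|\nabla u\|_{L^2}$ gives $\|\nabla u\|_{L^2_tL^2_x}\lesssim\|f\|_{L^2_tL^{2d/(d+2)}_x}$, and Sobolev embedding concludes; alternatively one may quote the Keel--Tao endpoint theorem, whose hypotheses the heat propagator satisfies. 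Replace the Christ--Kiselev remark by one of these arguments (or restrict the claim, noting the paper never uses the double endpoint) and the proof is complete.
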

In the proposition above and in what follows we use the following notation: for any $p \geq 1 $, we denote by $p' \in \R$ the constant such that 
\begin{equation*}
 1 = \frac{1}{p} + \frac{1}{p'},
\end{equation*}
with the convention that $p' = \infty$ if $p =1$. \newline 
The following estimate holds, see \cite[Theorem 16.3]{LaSoUr88}.
\begin{prop} \label{prop:smooth_par}
 Let $\Omega$ be a regular domain of class $C^2$ boundary. Let $\{e^{t\Delta}\}_{t >0}$ be the heat semigroup in $\R^d$ or the Dirichlet heat semigroup in $\Omega$. Then there exists a constant $C >0$ such that for any $F \in L^2([0,\infty), L^2(\Omega))$
 \begin{equation} \label{eq:smooth}
 \left\|\nabla \int_0^te^{(t-s)\Delta}F(s)\,ds\right\|_{L^\infty([0,\infty),L^2)}\leq C\|F\|_{L^2([0,\infty),L^2)}.
 \end{equation}
\end{prop}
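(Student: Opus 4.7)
The plan is to view the quantity $v(t) := \int_0^t e^{(t-s)\Delta} F(s)\,ds$ as the mild (and hence, by standard parabolic regularity, strong) solution of the inhomogeneous heat equation $\partial_t v - \Delta v = F$ with $v(0)=0$ and the Dirichlet boundary condition inherited from $e^{t\Delta}$. The estimate \eqref{eq:smooth} then amounts to the maximal-regularity-type bound $\|\nabla v\|_{L^\infty_t L^2} \leq C\|F\|_{L^2_t L^2}$, which I would derive by a direct energy argument.

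Formally, pairing the equation with $-\Delta v$ in $L^2(\Omega)$ and using the Dirichlet condition to identify $\int_\Omega \partial_t v\,(-\Delta v)\,dx = \tfrac{1}{2}\tfrac{d}{dt}\|\nabla v\|_{L^2}^2$, Cauchy--Schwarz followed by Young's inequality on the right-hand side produces
\begin{equation*}
\tfrac{d}{dt}\|\nabla v\|_{L^2}^2 + \|\Delta v\|_{L^2}^2 \leq \|F\|_{L^2}^2.
\end{equation*}
Integrating in time from $0$ to $t$ and using $v(0)=0$ yields $\|\nabla v(t)\|_{L^2}^2 \leq \|F\|_{L^2([0,t],L^2)}^2$; taking the supremum in $t$ gives \eqref{eq:smooth} with $C=1$.

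The main obstacle is that, a priori, $v$ need not lie in $L^2_t H^2$, so the integration by parts above is not yet justified. I would handle this by approximation: pick $F_n \in C^\infty_c((0,\infty)\times\Omega)$ with $F_n \to F$ in $L^2([0,\infty), L^2)$; standard parabolic theory (for which the $C^2$ regularity of $\partial\Omega$ is precisely what is required in the bounded case, and which is automatic on $\R^d$) produces corresponding solutions $v_n \in L^2_t H^2 \cap H^1_t L^2$ satisfying the Dirichlet condition, for which the computation above is legitimate. Passing to the limit $n \to \infty$ in the resulting inequality transfers the bound to $v$. As an independent cross-check, the same estimate can be verified on the frequency side: expanding $F$ in the Dirichlet-Laplacian eigenbasis $\{\phi_k,\lambda_k\}$ on bounded $\Omega$ (or using the Fourier transform on $\R^d$) one has $v_k(t) = \int_0^t e^{-\lambda_k(t-s)} F_k(s)\,ds$, and Cauchy--Schwarz in the $s$-variable yields $\lambda_k |v_k(t)|^2 \leq \tfrac{1}{2}\|F_k\|_{L^2([0,\infty))}^2$; summing (or integrating) in $k$ recovers the estimate with $C = 1/\sqrt{2}$, uniformly in $t$.
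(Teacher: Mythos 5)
Your argument is correct, but it is worth noting that the paper does not actually prove this proposition: it simply quotes it as Theorem 16.3 of Ladyzhenskaya--Solonnikov--Uraltseva \cite{LaSoUr88}, i.e.\ it imports the estimate from the classical parabolic regularity theory. Your route is a self-contained maximal-regularity-type energy argument: testing $\partial_t v-\Delta v=F$, $v(0)=0$, with $-\Delta v$ gives $\tfrac{d}{dt}\|\nabla v\|_{L^2}^2+\|\Delta v\|_{L^2}^2\leq\|F\|_{L^2}^2$, hence the bound with the explicit constant $C=1$ (and your spectral computation even yields $C=1/\sqrt{2}$), valid both on a bounded $C^2$ domain and on $\R^d$. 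What the citation buys the authors is brevity and access to the full $L^p$ parabolic theory; what your proof buys is elementarity, an explicit constant, and transparency about where the $C^2$ boundary enters (namely in the $H^2$ regularity needed to justify the integration by parts for the approximating solutions). The only step I would ask you to make explicit is the limit passage: from $\|v_n(t)-v(t)\|_{L^2}\leq\sqrt{t}\,\|F_n-F\|_{L^2([0,\infty),L^2)}$ you get $v_n(t)\to v(t)$ in $L^2$ for each $t$, which together with the uniform bound on $\|\nabla v_n(t)\|_{L^2}$ gives $\nabla v_n(t)\rightharpoonup\nabla v(t)$ weakly in $L^2$, and weak lower semicontinuity of the norm then transfers the estimate to $v$; as written, ``passing to the limit'' leaves this (routine but necessary) point implicit.
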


Finally we also notice that for any $u_0 \in H^1$, a solution $u(t)$ to the linear system \eqref{eq:linheat} dissipates the $L^2$-norm in the following way
\begin{equation}\label{eq:linmass}
 \| e^{t\Delta} u_0 \|_{L^2}^2 = \| u_0\|_{L^2}^2 - 2\int_0^t \| \nabla e^{s \Delta} u_0\|_{L^2}^2 \, ds.
\end{equation}

\subsection{Stationary states}\label{ss:stst}

In this subsection, we recall some results concerning the stationary solutions of equation \eqref{eq:heat}, namely satisfying
\begin{equation}\label{eq:st_st}
	0 = \Delta Q + gQ^{2\sigma+1} + \mu[Q] Q.
\end{equation}
Note that, for any given $\alpha >0$, if the problem
\begin{equation}\label{eq:min_pr}
			e_\alpha = \inf \left\{E[u]\,: u \in H^1_0(\Omega), \, \| u\|_{L^2} = \alpha \right\},
\end{equation} 
admits a minimizer $Q \in H^1_0(\Omega)$, then $Q$ solves equation \eqref{eq:st_st} where the constant $\mu[Q]$ is a Lagrange multiplier. Moreover, we recall the following Pohozaev's identities.

\begin{lem}
		 Let $Q \in H^1(\R^d)$ be a solution to \eqref{eq:st_st}. Then
		 \begin{equation}\label{eq:poh1}
		 \mu[Q] \| Q\|_{L^2}^2 = \| \nabla Q\|_{L^2}^2 - g \| Q\|_{L^{2\sigma + 2}}^{2\sigma +2},
		 \end{equation}
		 \begin{equation}\label{eq:poh2}
		 \frac{d-2}{2} \| \nabla Q\|_{L^2} = \frac{\mu[Q]d}{2} \| Q\|_{L^2}^2 + \frac{dg}{2\sigma + 2} \| Q\|_{L^{2\sigma + 2}}^{2\sigma +2}.
		 \end{equation}
\end{lem}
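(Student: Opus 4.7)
The plan is to derive the two identities by testing the stationary equation
\begin{equation*}
\Delta Q + gQ^{2\sigma+1} + \mu[Q]Q = 0
\end{equation*}
against two different multipliers: $Q$ itself for \eqref{eq:poh1}, and the dilation generator $x\cdot\nabla Q$ for \eqref{eq:poh2}.

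For \eqref{eq:poh1}, I would multiply the equation by $Q$ and integrate over $\R^d$. Integration by parts on the Laplacian term gives $-\|\nabla Q\|_{L^2}^2$, the power term yields $g\|Q\|_{L^{2\sigma+2}}^{2\sigma+2}$, and the linear term $\mu[Q]\|Q\|_{L^2}^2$. Rearranging produces exactly \eqref{eq:poh1}. Note that this identity is actually consistent with the definition \eqref{eq:mu} of $\mu[\cdot]$, so it simultaneously confirms that $Q$ is a genuine critical point on the constrained manifold.

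For \eqref{eq:poh2}, the standard Pohozaev computation is: multiply by $x\cdot\nabla Q$ and integrate. I would use the three pointwise identities
\begin{equation*}
\Delta Q\,(x\cdot\nabla Q) = \mathrm{div}\bigl((x\cdot\nabla Q)\nabla Q\bigr) - |\nabla Q|^2 - \tfrac{1}{2}\,x\cdot\nabla(|\nabla Q|^2),
\end{equation*}
\begin{equation*}
Q^{2\sigma+1}(x\cdot\nabla Q) = \tfrac{1}{2\sigma+2}\,x\cdot\nabla(Q^{2\sigma+2}), \qquad Q\,(x\cdot\nabla Q) = \tfrac{1}{2}\,x\cdot\nabla(Q^2),
\end{equation*}
and then integrate by parts the divergence-form terms. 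Using $\mathrm{div}(x) = d$, this produces
\begin{equation*}
-\tfrac{d-2}{2}\|\nabla Q\|_{L^2}^2 + \tfrac{dg}{2\sigma+2}\|Q\|_{L^{2\sigma+2}}^{2\sigma+2} + \tfrac{\mu[Q]d}{2}\|Q\|_{L^2}^2 = 0,
\end{equation*}
which is \eqref{eq:poh2}.

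The main technical obstacle is justifying the integration by parts on $\R^d$, since the multiplier $x\cdot\nabla Q$ is unbounded and $Q$ is only assumed in $H^1(\R^d)$. The standard remedy is to first upgrade the regularity of $Q$ via elliptic bootstrap: since $\sigma<2/(d-2)^+$, $Q\in H^1$ implies $Q^{2\sigma+1}\in L^2_{\mathrm{loc}}$, so $\Delta Q\in L^2_{\mathrm{loc}}$, giving $Q\in H^2_{\mathrm{loc}}$; iteration plus $W^{2,p}$ and Schauder theory gives $Q\in C^2(\R^d)$ with exponential decay of $Q$ and $\nabla Q$ at infinity (provided $\mu[Q]<0$; otherwise Kato's inequality and the decay at infinity built into the variational setup suffice). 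One then introduces a cutoff $\chi_R(x) = \chi(|x|/R)$ and tests against $\chi_R(x\cdot\nabla Q)$ and $\chi_R Q$, and sends $R\to\infty$ using the decay to absorb all boundary-like terms.
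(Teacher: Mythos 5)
Your proposal is correct and follows essentially the same route as the paper: \eqref{eq:poh1} is obtained by pairing the stationary equation with $Q$, and \eqref{eq:poh2} by pairing it with the dilation multiplier $x\cdot\nabla Q$. The paper's proof consists of exactly these two steps (stated without further detail), so your additional regularity bootstrap and cutoff justification merely fills in standard details the authors omit.
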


\begin{proof}
 We take the scalar product of \eqref{eq:st_st} with $Q$ and obtain \eqref{eq:poh1}. Similarly, we obtain \eqref{eq:poh2} by taking the scalar product of \eqref{eq:st_st} with $x \cdot \nabla Q$. 
\end{proof}	

As a consequence of \eqref{eq:poh1}, we have that
\begin{equation*}
	\mu[Q] = \frac{\| \nabla Q \|^2_{L^2} - g \| Q\|^{2\sigma + 2}_{L^{2\sigma + 2}}}{{\|Q\|_{L^2}^2}}.
\end{equation*}
	In what follows, we denote by $B(0,R) = \{x \in \R^d\, ;\, |x| < R\}$ a ball in $\R^d$, centered in the origin. 
	We start with the following standard proposition (see \cite{Li84_2, Sh17, St82}).
	\begin{prop} \label{prp:gs}
		Let $g > 0$ and $\sigma < \frac{2}{d}$. Let $\Omega= B(0,R)$ or $\Omega = \R^d$. Then for any $\alpha >0$, the minimization problem \eqref{eq:min_pr}
		admits a real non-negative, radially symmetric minimizer.
	\end{prop}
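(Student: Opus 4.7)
The plan is to apply the direct method of the calculus of variations: produce a non-negative, radially symmetric minimizing sequence via Schwarz symmetrization, extract a weak limit in $H^1$, and use compactness tools appropriate to each geometry to pass to the limit.

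First, I would verify that $e_\alpha$ is finite and strictly negative. The Gagliardo--Nirenberg inequality $\|u\|_{L^{2\sigma+2}}^{2\sigma+2} \leq C\|\nabla u\|_{L^2}^{d\sigma}\|u\|_{L^2}^{2\sigma+2-d\sigma}$, together with the hypothesis $d\sigma<2$ and Young's inequality, gives $E[u] \geq \tfrac14\|\nabla u\|_{L^2}^2 - C(g,\sigma,d,\alpha)$ on the constraint $\|u\|_{L^2}=\alpha$, so $e_\alpha>-\infty$ and every minimizing sequence is bounded in $H^1_0(\Omega)$. On the other hand, fixing any $v$ with $\|v\|_{L^2}=\alpha$ and introducing the $L^2$-preserving rescaling $v_\lambda(x):=\lambda^{d/2}v(\lambda x)$ gives $E[v_\lambda]=\tfrac{\lambda^2}{2}\|\nabla v\|_{L^2}^2 - \tfrac{g\lambda^{d\sigma}}{2\sigma+2}\|v\|_{L^{2\sigma+2}}^{2\sigma+2}$, which is negative for small $\lambda$ because $d\sigma<2$ and $g>0$. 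Hence $e_\alpha<0$, a fact used crucially in the whole-space case below.

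Given a minimizing sequence $\{u_n\}$, I would replace $u_n$ by $|u_n|$ and then by its Schwarz rearrangement $u_n^*$. By the P\'olya--Szeg\H{o} inequality, the rearrangement preserves every $L^p$ norm, does not increase the $L^2$ norm of the gradient, and maps $H^1_0(B(0,R))$ into itself and $H^1(\R^d)$ into the cone of radial functions. Thus $\{u_n^*\}$ is still a minimizing sequence, bounded in $H^1_0(\Omega)$ and consisting of non-negative, radially symmetric, non-increasing functions; along a subsequence, $u_n^*\rightharpoonup u^*$ weakly in $H^1_0(\Omega)$ with $u^*\geq 0$ radial. To handle the nonlinear term I would invoke, for $\Omega=B(0,R)$, the Rellich--Kondrachov theorem, yielding strong convergence $u_n^*\to u^*$ in $L^q(\Omega)$ for every $q<2^\ast$ (in particular for $q=2$ and $q=2\sigma+2$, since $\sigma<2/d<2/(d-2)^+$); for $\Omega=\R^d$ I would use the Strauss compact embedding of radial $H^1(\R^d)$ into $L^{2\sigma+2}(\R^d)$, valid in the range $2<2\sigma+2<2^\ast$. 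Combined with the weak lower semicontinuity of $\|\nabla\cdot\|_{L^2}^2$, this yields $E[u^*]\leq\liminf_n E[u_n^*]=e_\alpha$.

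In the bounded-domain case the proof is complete, because Rellich also gives strong $L^2$ convergence, so $\|u^*\|_{L^2}=\alpha$ and $u^*$ is a minimizer. The main obstacle is the whole-space case, where the $L^2$ norm is only weakly lower semicontinuous and mass could escape to infinity. To exclude this I would argue by contradiction: suppose $\beta:=\|u^*\|_{L^2}<\alpha$ (note $\beta>0$, since $u^*=0$ would force $E[u^*]=0$, contradicting $E[u^*]\leq e_\alpha<0$), and rescale $\tilde u:=\theta u^*$ with $\theta:=\alpha/\beta>1$, so that $\|\tilde u\|_{L^2}=\alpha$. Writing $A:=\tfrac12\|\nabla u^*\|_{L^2}^2$ and $B:=\tfrac{g}{2\sigma+2}\|u^*\|_{L^{2\sigma+2}}^{2\sigma+2}$, we have $E[u^*]=A-B\leq e_\alpha<0$, hence $B>A\geq 0$; since $2\sigma+2>2$,
\begin{equation*}
E[\tilde u]=\theta^2 A-\theta^{2\sigma+2} B<\theta^2(A-B)<A-B=E[u^*]\leq e_\alpha,
\end{equation*}
contradicting the definition of $e_\alpha$. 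Therefore $\beta=\alpha$, and $u^*$ is the desired real, non-negative, radially symmetric minimizer.
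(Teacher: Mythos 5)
Your argument is correct in substance, and it is a genuinely different route from the paper's: the paper does not prove Proposition \ref{prp:gs} at all, but quotes it as standard from the literature, where the whole-space case is typically handled by concentration--compactness. You instead give a self-contained direct proof: symmetrize the minimizing sequence (so it consists of non-negative, radial, radially non-increasing functions), use Rellich--Kondrachov on the ball and radial compactness on $\R^d$ to pass to the limit in the potential term, and exclude loss of mass at infinity by the dilation-free trick $\tilde u=\theta u^*$, which strictly lowers the energy when $\theta>1$ and $E[u^*]\le e_\alpha<0$. This buys an elementary, short proof that also exhibits the minimizer as radially decreasing; the price is that it is tailored to $g>0$ and the mass-subcritical range $d\sigma<2$, whereas the quoted machinery is more robust.

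Two points should be repaired in the write-up, though neither breaks the proof. First, your blanket claim that $e_\alpha$ is strictly negative is justified only for $\Omega=\R^d$: the rescaling $v_\lambda(x)=\lambda^{d/2}v(\lambda x)$ with $\lambda<1$ does not stay in $H^1_0(B(0,R))$, and in fact on a bounded domain Poincar\'e plus Gagliardo--Nirenberg give $e_\alpha>0$ when $\alpha$ is small. Since you only use negativity in the whole-space step, simply assert and prove it there. Second, the compact embedding $H^1_{rad}(\R^d)\hookrightarrow L^{2\sigma+2}(\R^d)$ (Strauss) requires $d\ge2$; to cover $d=1$ (and to keep the argument uniform) use instead that your functions are radially non-increasing, so
\begin{equation*}
|u_n^*(x)|^2\le c_d\,|x|^{-d}\,\|u_n^*\|_{L^2}^2,
\end{equation*}
which gives $\int_{|x|>R}|u_n^*|^{2\sigma+2}\,dx\lesssim R^{-d\sigma}$ uniformly in $n$; combined with Rellich on balls this yields the strong $L^{2\sigma+2}(\R^d)$ convergence you need in every dimension. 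With these adjustments the proof is complete.
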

	We proceed by recalling the result of Gidas, Ni, Nirenberg \cite[Theorem $1$]{GiNiNi79} stating that any real non-negative solution to \eqref{eq:st_st} with $\Omega = B(0, R)$ or $\Omega = \R^d$ is radially symmetric.
	\begin{prop}\label{prp:gs1}
		Let $Q \geq 0$ be a real, $C^2(\bar{\Omega})$ solution to \eqref{eq:st_st} with $Q_{|\partial \Omega} = 0$ and let $\Omega = B(0,R)$ or $\Omega = \R^d$. Then $Q$ is radially symmetric, strictly positive, and strictly decreasing in $|x|$. 
	\end{prop}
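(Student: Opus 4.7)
The plan is to recognize \eqref{eq:st_st} as a semilinear elliptic equation $\Delta Q + f(Q) = 0$ with the locally Lipschitz nonlinearity
\[
f(s) := g\, s^{2\sigma+1} + \mu[Q]\, s,
\]
where $\mu[Q]$ is now a fixed real constant determined by the given solution $Q$. In this form, the statement is exactly Theorem 1 of \cite{GiNiNi79}, and the proof I would write is essentially a verification that the hypotheses of the moving plane method are met in our setting.

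First I would establish strict positivity. Rewriting \eqref{eq:st_st} as $\Delta Q + c(x)\, Q = 0$ with $c(x) := g\, Q(x)^{2\sigma} + \mu[Q]$, the coefficient $c$ is continuous and bounded on $\bar\Omega$ because $Q \in C^2(\bar\Omega)$. Since $Q \geq 0$ is assumed nontrivial (otherwise the conclusion is vacuous), the strong maximum principle yields $Q > 0$ in the interior of $\Omega$.

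Next I would carry out the moving plane argument in an arbitrary direction which, up to rotation, I take to be $e_1$. For $\lambda \in \R$, set $\Sigma_\lambda := \Omega \cap \{x_1 > \lambda\}$, and let $Q_\lambda(x) := Q(2\lambda - x_1, x_2, \dots, x_d)$, $w_\lambda := Q_\lambda - Q$. By the mean value theorem applied to $f$, the difference $w_\lambda$ satisfies a linear equation $\Delta w_\lambda + c_\lambda(x)\, w_\lambda = 0$ on $\Sigma_\lambda$ with bounded coefficient $c_\lambda$. For $\lambda$ close to $\sup_{x \in \bar\Omega} x_1$ (for the ball) or sufficiently large (for $\R^d$), the set $\Sigma_\lambda$ is a thin domain and the standard maximum principle in narrow domains yields $w_\lambda \geq 0$. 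I would then slide $\lambda$ downward and, at the infimum $\lambda_0$ of those $\lambda$ for which $w_\mu \geq 0$ for all $\mu \geq \lambda$, apply the strong maximum principle together with the Hopf boundary point lemma to $w_{\lambda_0}$ to force $\lambda_0 = 0$. This gives $Q(-x_1, x') = Q(x_1, x')$; rotating the direction of reflection produces radial symmetry, and the strict inequality $w_\lambda > 0$ on $\Sigma_\lambda$ for $0 < \lambda < \sup x_1$ translates to $\partial_r Q < 0$.

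The main obstacle is the case $\Omega = \R^d$: the procedure above requires $Q(x) \to 0$ as $|x| \to \infty$, both to initiate the moving plane iteration from $\lambda = +\infty$ and to control sign changes of $w_\lambda$ in the unbounded tail. Such decay is not implied by $Q \in C^2(\R^d)$ alone, but it is present for the stationary states of interest in this paper (namely the minimizers of \eqref{eq:min_pr} supplied by Proposition \ref{prp:gs}, which lie in $H^1(\R^d)$ and hence decay at infinity by standard elliptic regularity applied to \eqref{eq:st_st}). Alternatively one may invoke the Kelvin-transform variant of the moving plane method due to Chen--Li and Li to remove the decay hypothesis entirely. Once the initial iterate is in place, the rest of the argument on $\R^d$ parallels the ball case verbatim.
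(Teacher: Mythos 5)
Your proposal is correct and matches the route the paper itself takes: Proposition \ref{prp:gs1} is stated there without proof, as a recollection of \cite[Theorem 1]{GiNiNi79}, and your moving-plane sketch (strong maximum principle for strict positivity, reflection and sliding in an arbitrary direction, Hopf lemma at the critical plane) is precisely the argument of that reference. Your treatment of the decay issue on $\R^d$ is also consistent with the paper's convention that the Dirichlet condition there means $Q\to 0$ as $|x|\to\infty$, so no further repair is needed.
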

	Next, we recall the result stating the uniqueness of positive, radial solutions due to Kwong \cite{Kw89} and Mcleod-Serrin \cite[Theorem $1$]{McSe87}.
	\begin{prop}\label{prp:gs2}
		For any $d > 1$, there exists exactly one positive solution $Q \in H^1_{rad}(\Omega)$ of the problem
		\begin{equation*}
			0 = Q'' + \frac{d-1}{r} Q' + |Q|^{2\sigma} Q + \mu[Q] Q,
		\end{equation*}
		where $\Omega=B(0,R)$, $\Omega = \{ x \in \R^d, \, a \leq |x| \leq b\}$ or $\Omega = \R^d$.
	\end{prop}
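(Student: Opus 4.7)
The plan is to invoke the classical uniqueness theorems of Kwong \cite{Kw89} and McLeod--Serrin \cite[Theorem~1]{McSe87}, but with care to handle the fact that $\mu[Q]$ is not a free parameter but a nonlinear functional of the unknown $Q$. The first step is to reduce to a fixed-parameter equation: for any candidate positive radial $H^1_{rad}$ solution $Q$, the quantity $\lambda := \mu[Q] \in \R$ is a single real number, so $Q$ satisfies the standard semilinear elliptic equation $\Delta Q + Q^{2\sigma+1} + \lambda Q = 0$ with the prescribed boundary condition. At fixed $\lambda$, Kwong's theorem covers the whole-space case $\Omega = \R^d$, while McLeod--Serrin's result covers the ball $B(0,R)$ and the spherical annulus, each delivering at most one positive radial $H^1$ solution. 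Both proofs proceed by phase-plane/shooting analyses of the radial ODE combined with a clever Pohozaev-like identity along the orbit.

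Next, to upgrade this fixed-parameter uniqueness into uniqueness for the coupled problem, suppose that $Q_1$ and $Q_2$ are two positive radial solutions with $\lambda_i := \mu[Q_i]$. If $\lambda_1 = \lambda_2$, the Kwong/McLeod--Serrin step forces $Q_1 = Q_2$. To preclude $\lambda_1 \neq \lambda_2$, I would parametrize the (locally smooth) branch of positive radial solutions by $\lambda$, call it $\{Q_\lambda\}$, and show that the induced map $\lambda \mapsto \mu[Q_\lambda]$ is strictly monotone. Since a solution of the coupled problem is exactly a fixed point of this map, strict monotonicity yields at most one admissible value of $\lambda$, and hence at most one positive radial solution. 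The monotonicity itself would be extracted by implicit differentiation of the ODE with respect to $\lambda$, testing the linearized equation against $Q_\lambda$, and invoking the Pohozaev identities \eqref{eq:poh1}--\eqref{eq:poh2} to pin down the sign of $\tfrac{d}{d\lambda}\mu[Q_\lambda]$.

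The whole-space case requires additional care because the equation admits a continuous scaling symmetry $Q(x) \mapsto \alpha Q(\alpha^\sigma x)$ under which $\mu[Q]$ transforms homogeneously by $\alpha^{2\sigma}$, producing a one-parameter orbit of positive radial solutions; the statement must be understood modulo this scaling, or equivalently after imposing the $L^2$-normalization implicit in the minimization problem \eqref{eq:min_pr} from which the ground state $Q$ arises. The normalization selects a unique representative of the scaling orbit, which by the fixed-parameter step is the unique positive radial solution. I expect the main obstacle to be the rigorous establishment of the monotonicity of $\lambda \mapsto \mu[Q_\lambda]$ in the bounded-domain cases: it is sensitive to the range of $\sigma$ relative to the Sobolev-critical and $L^2$-critical exponents, and requires bifurcation-theoretic care at the endpoints of the branch. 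Once monotonicity is in hand, the proposition follows by combining it with the Kwong/McLeod--Serrin uniqueness at fixed $\lambda$.
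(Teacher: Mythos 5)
The paper offers no proof of this proposition at all: it is recalled verbatim from the literature, with the uniqueness at a \emph{fixed} value of the multiplier being exactly the content of Kwong's theorem (for $\R^d$) and of McLeod--Serrin (balls and annuli). Your first step --- freezing $\lambda:=\mu[Q]$ and invoking those theorems --- is therefore precisely what the paper does, and is all that is intended: in the only place the proposition is used (Corollary \ref{cor:uniq}), the solution is already pinned down by the mass constraint $\|Q\|_{L^2}=\alpha$, so only fixed-parameter uniqueness is needed.

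Your second paragraph, however, contains a genuine conceptual error. The relation $\lambda=\mu[Q_\lambda]$ is not a fixed-point equation that selects an admissible $\lambda$: it is an \emph{identity}, valid for every $\lambda$ for which a solution exists. Indeed, multiplying $\Delta Q+Q^{2\sigma+1}+\lambda Q=0$ by $Q$ and integrating gives $\lambda\|Q\|_{L^2}^2=\|\nabla Q\|_{L^2}^2-\|Q\|_{L^{2\sigma+2}}^{2\sigma+2}$, i.e.\ $\lambda=\mu[Q]$ --- this is exactly Pohozaev's identity \eqref{eq:poh1}. Hence the map $\lambda\mapsto\mu[Q_\lambda]$ you propose to study is the identity on its domain; every point is a fixed point, no strict monotonicity argument can isolate a single $\lambda$, and the program of the second paragraph cannot be carried out. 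Nor should it be: as you yourself observe for $\Omega=\R^d$, the scaling $Q\mapsto\alpha Q(\alpha^{\sigma}x)$ produces a one-parameter family of positive radial solutions of \eqref{eq:st_st}, each with its own value of $\mu$, so uniqueness across different values of the multiplier is simply false as a literal statement. The proposition must be read (as the paper uses it) as uniqueness at fixed multiplier, equivalently at fixed mass; once read this way, your first step already finishes the proof and the remaining two paragraphs should be deleted.
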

	Let us notice that Propositions \ref{prp:gs}, \ref{prp:gs1}, \ref{prp:gs2} imply the following.
	\begin{cor}\label{cor:uniq}
		Let $g >0$ and $\sigma < \frac{2}{d}$. Let $\Omega = B(0,R)$. Then for any $\alpha >0$, the unique, positive minimizer to the problem
		\begin{equation*}
			e_\alpha = \inf \left\{E[u]\,: u \in H^1_0(\Omega), \, \| u\|_{L^2} = \alpha \right\},
		\end{equation*}
		is also the unique, strictly positive solution to equation \eqref{eq:st_st}.
	\end{cor}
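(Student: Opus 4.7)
The plan is to assemble the three preceding propositions, with no real work needed beyond book-keeping. First, by Proposition \ref{prp:gs} applied with $\Omega = B(0,R)$, $g>0$, and $\sigma < 2/d$, the minimization problem \eqref{eq:min_pr} admits, for each prescribed $\alpha > 0$, a real, non-negative, radially symmetric minimizer $Q \in H^1_0(B(0,R))$. Writing the Euler--Lagrange equation yields \eqref{eq:st_st} with Lagrange multiplier $\mu[Q]$ given by \eqref{eq:mu}. By standard elliptic regularity up to the $C^2$ boundary, $Q \in C^2(\overline{B(0,R)})$, and since $\|Q\|_{L^2} = \alpha > 0$ rules out $Q \equiv 0$, the strong maximum principle applied to $-\Delta Q = g Q^{2\sigma+1} + \mu[Q] Q$ upgrades the inequality $Q \geq 0$ to $Q > 0$ in the interior.

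For the uniqueness part, the natural route is to start from an arbitrary strictly positive $\widetilde Q \in H^1_0(B(0,R))$ solving \eqref{eq:st_st} and show that it must coincide with the $Q$ just constructed. By Proposition \ref{prp:gs1}, any such $\widetilde Q$ is automatically radially symmetric and strictly decreasing in $|x|$, so one is reduced to the radial ODE of Proposition \ref{prp:gs2}. That proposition yields exactly one positive radial solution, which must therefore coincide both with $\widetilde Q$ and with the minimizer $Q$. This simultaneously proves uniqueness of the positive solution to \eqref{eq:st_st} and its identification with the unique positive minimizer of \eqref{eq:min_pr}.

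The main, largely notational, obstacle is the self-consistent coupling between $Q$ and the Lagrange multiplier $\mu[Q]$. Two positive solutions of \eqref{eq:st_st} with different $L^2$-masses will in general carry different multipliers through \eqref{eq:mu}, so the uniqueness statement of Proposition \ref{prp:gs2} must be read as uniqueness among positive radial solutions at fixed mass $\alpha$, equivalently at the value of $\mu$ determined by $Q$ via the Pohozaev identity \eqref{eq:poh1}. Once this reading is in place, no additional argument is required and the corollary follows immediately.
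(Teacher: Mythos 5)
Your proposal is correct and follows essentially the same route as the paper, which states the corollary as an immediate consequence of Propositions \ref{prp:gs}, \ref{prp:gs1} and \ref{prp:gs2} without further detail. Your added remarks (elliptic regularity, strong maximum principle, and the reading of Proposition \ref{prp:gs2} as uniqueness at fixed mass, i.e.\ at the multiplier determined via \eqref{eq:poh1}) only make explicit the book-keeping the paper leaves implicit.
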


\subsection{Potential well in a bounded domain}\label{sec:potwel}

In this subsection, we present some preliminary definitions and results regarding the potential well method in a bounded set. We use this method to provide an alternative sufficient condition for global existence for the Cauchy problem \eqref{eq:heat}, stated in Theorem \ref{thm:pot1}. This argument was first introduced in \cite{Ts72} and \cite{PaSa75} for parabolic problems. We refer the interested reader to \cite[Section 19]{QuSo19} for more details.
 
Let $\Omega$ be a bounded domain, $0 < \sigma < \frac{2}{d-2}$ and $g >0$. We fix $g =1$ without losing generality to facilitate the exposition. 
We define the functional $I$ by
\begin{equation*}
	I[u] = \| \nabla u\|_{L^2}^2 - \| u\|_{L^{2\sigma + 2}}^{2\sigma + 2}, \quad u \in H^1_0(\Omega).
\end{equation*}
Notice that $I$ can also be written as 
\begin{equation}\label{eq:itoE}
 I[u] = 2E[u] - \frac{\sigma}{\sigma + 1} \| u\|_{L^{2\sigma + 2}}^{2\sigma + 2},
\end{equation}
where $E$ is the energy functional defined in \eqref{eq:en2}. 
\newline
The potential well associated with problem \eqref{eq:heat} is the set 
\begin{equation}\label{eq:wdef}
	\mathcal W = \left\{ u \in H^1_0(\Omega)\, ;\, E[u] < p, \, I[u] > 0 \right\} \cup \{0\},
\end{equation}
where $p$ is defined by 
\begin{equation}\label{eq:infP}
	p = \inf \left\{E[u]\, ; \, u \in H^1_0(\Omega) \setminus \{ 0\}, I[u] = 0 \right\}. 
\end{equation}
We shall see in Lemma \ref{lem:p} below that
\begin{equation}\label{eq:plamd}
	p = \frac{\sigma}{2\sigma + 2} \Lambda^\frac{2\sigma + 2}{\sigma},
\end{equation}
where $\Lambda = \Lambda(\sigma, \Omega)$ is the best constant in the Sobolev embedding $H^1_0(\Omega) \hookrightarrow L^{2\sigma + 2}(\Omega)$, i.e,
\begin{equation}\label{eq:sobcon}
	\Lambda = \inf \left\{ \frac{\|\nabla u\|_{L^2}}{\|u\|_{L^{2\sigma + 2}}}\, ;\, u \in H^2(\Omega) \cap H^1_0(\Omega) , u\neq 0 \right\}.
\end{equation}
We also define the exterior of the potential well as 
\begin{equation}\label{eq:zdef}
	\mathcal Z = \left\{ u \in H^1_0(\Omega)\, ;\, E[u] < p, \, I[u] < 0 \right\}.
\end{equation}

\begin{lem}\label{lem:p}
	Let $\Omega$ be bounded and $0 < \sigma \leq \frac{2}{(d-2)^+}$. Then property \eqref{eq:plamd} is true. Moreover, if $0 < \sigma < \frac{2}{(d-2)^+}$ then the infimum in \eqref{eq:infP} is achieved. 
\end{lem}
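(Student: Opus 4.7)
The plan is to reduce the constrained variational problem defining $p$ to the Sobolev minimization in \eqref{eq:sobcon} via a scaling argument. The key observation is that on the constraint set $\{I[u]=0\}$ one has $\|\nabla u\|_{L^2}^2 = \|u\|_{L^{2\sigma+2}}^{2\sigma+2}$, and inserting this into the identity \eqref{eq:itoE} gives $E[u] = \frac{\sigma}{2\sigma+2}\|\nabla u\|_{L^2}^2$. Thus computing $p$ amounts to minimizing $\|\nabla u\|_{L^2}^2$ over $\{u \in H^1_0(\Omega)\setminus\{0\}: I[u]=0\}$.

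For the lower bound on $p$, I would combine the constraint with the Sobolev inequality $\Lambda\|u\|_{L^{2\sigma+2}} \leq \|\nabla u\|_{L^2}$ to obtain $\|\nabla u\|_{L^2}^2 = \|u\|_{L^{2\sigma+2}}^{2\sigma+2} \leq \Lambda^{-(2\sigma+2)}\|\nabla u\|_{L^2}^{2\sigma+2}$, hence $\|\nabla u\|_{L^2}^{2\sigma} \geq \Lambda^{2\sigma+2}$, which yields $p \geq \frac{\sigma}{2\sigma+2}\Lambda^{(2\sigma+2)/\sigma}$. For the matching upper bound I would take a minimizing sequence $\{v_n\}$ for the Sobolev quotient and rescale each $v_n$ onto $\{I=0\}$ by choosing $\lambda_n > 0$ with $\lambda_n^{2\sigma} = \|\nabla v_n\|_{L^2}^2/\|v_n\|_{L^{2\sigma+2}}^{2\sigma+2}$. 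A short algebraic manipulation then gives $\|\nabla(\lambda_n v_n)\|_{L^2}^2 = (\|\nabla v_n\|_{L^2}/\|v_n\|_{L^{2\sigma+2}})^{(2\sigma+2)/\sigma} \to \Lambda^{(2\sigma+2)/\sigma}$, so that $E[\lambda_n v_n]$ converges to the conjectured value, completing the proof of \eqref{eq:plamd}.

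For the attainment statement under the strictly subcritical assumption $0 < \sigma < \frac{2}{(d-2)^+}$, I would exploit the compactness of the Rellich--Kondrachov embedding $H^1_0(\Omega) \hookrightarrow L^{2\sigma+2}(\Omega)$, which holds precisely in this range on a bounded domain. Normalizing the minimizing sequence so that $\|v_n\|_{L^{2\sigma+2}} = 1$, one has $\|\nabla v_n\|_{L^2} \to \Lambda$, so $\{v_n\}$ is bounded in $H^1_0$ and, up to a subsequence, converges weakly in $H^1_0$ and strongly in $L^{2\sigma+2}$ to some $v^*$. Strong $L^{2\sigma+2}$ convergence forces $\|v^*\|_{L^{2\sigma+2}} = 1$, so $v^* \neq 0$, while weak lower semicontinuity of the Dirichlet seminorm combined with the Sobolev inequality applied to $v^*$ forces $\|\nabla v^*\|_{L^2} = \Lambda$. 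Rescaling $v^*$ as above then realizes the infimum in \eqref{eq:infP}. The main subtlety is ensuring $v^* \neq 0$: this is precisely where strict subcriticality enters, since at the critical exponent $2\sigma+2 = 2^*$ the embedding fails to be compact and the infimum is in general not attained on a bounded domain.
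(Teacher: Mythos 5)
Your proposal is correct and follows essentially the same route as the paper: rescaling onto the constraint set $\{I[u]=0\}$, where $E[u]=\tfrac{\sigma}{2\sigma+2}\|\nabla u\|_{L^2}^2$, reduces \eqref{eq:infP} to the Sobolev constant \eqref{eq:sobcon}, with the Sobolev inequality giving the lower bound and Rellich--Kondrachov compactness giving attainment in the subcritical range. The only (favorable) difference is that you obtain the upper bound by rescaling a minimizing sequence for the Sobolev quotient rather than the attained minimizer, which in fact also covers the borderline case $\sigma=\frac{2}{(d-2)^+}$ appearing in the statement of \eqref{eq:plamd}, while the paper's argument is written only for $0<\sigma<\frac{2}{(d-2)^+}$, where the minimizer exists.
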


\begin{proof}
	Let $0 < \sigma < \frac{2}{(d-2)^+}$. Then the infimum in \eqref{eq:sobcon} is attained for some $v \in H^1_0(\Omega)$, due to the compactness of the embedding $H^1_0(\Omega) \hookrightarrow L^{2\sigma + 2}(\Omega)$.
 By multiplying $v$ for a suitable constant $\nu$, we may suppose that $I[v] = 0$. Therefore
 \begin{equation*}
	 \| \nabla v\|_{L^2} = \| u\|_{L^{2\sigma + 2}}^{2\sigma + 2} = \Lambda^{-(2\sigma + 2)} \|\nabla v\|_{L^2}^{\sigma + 1}.
 \end{equation*}
 This implies that
 \begin{equation*}
	 E[v] = \frac{\sigma}{(2\sigma + 2)} \Lambda^\frac{2\sigma + 2}{\sigma}.
 \end{equation*}
 Thus, there exists an element $v$ so that $E[v] = p$ where $p$ is defined as in \eqref{eq:plamd} and $I[v] = 0$. To show that such a $p$ is the infimum of the problem \eqref{eq:infP} we notice that for any $u\in H^1_0(\Omega)$, $u \neq v$, $I[u] = 0$, the inequality
 \begin{equation*}
		\| \nabla u\|_{L^2} = \| u\|_{L^{2\sigma + 2}}^{2\sigma + 2} \leq \Lambda^{-(2\sigma + 2)} \|\nabla u\|_{L^2}^{\sigma + 1}.
 \end{equation*}
 implies that
 \begin{equation*}
	 E[u] \geq \frac{\sigma}{(2\sigma + 2)} \Lambda^\frac{2\sigma + 2}{\sigma}.
 \end{equation*}
\end{proof}

Next, we state a sufficient smallness condition for a function in $H^1_0(\Omega)$ to belong to $\mathcal{ W}$.

\begin{prop}\label{prp:minim}
 Let $f \in H^1_0(\Omega)$	 be so that $\| \nabla f \|_{L^2} < \sqrt{2p}$. Then $f \in \mathcal W$.
\end{prop}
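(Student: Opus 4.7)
The plan is to verify directly the two defining conditions of $\mathcal W$, exploiting the explicit formula $p = \frac{\sigma}{2\sigma+2}\Lambda^{\frac{2\sigma+2}{\sigma}}$ established in Lemma \ref{lem:p}. Throughout, I work with $g=1$, as fixed in this subsection, and may assume $f\neq 0$ (since $0\in\mathcal W$ by definition).

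First I would dispense with the energy bound. Because $g=1>0$, the nonlinear contribution to $E[f]$ is nonpositive, so
\begin{equation*}
E[f] = \tfrac12\|\nabla f\|_{L^2}^2 - \tfrac{1}{2\sigma+2}\|f\|_{L^{2\sigma+2}}^{2\sigma+2} \;\leq\; \tfrac12\|\nabla f\|_{L^2}^2 \;<\; \tfrac12(2p) = p,
\end{equation*}
which immediately yields the first condition $E[f]<p$.

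Next I would establish the strict positivity of $I[f]$. Using the Sobolev embedding \eqref{eq:sobcon} in the form $\|f\|_{L^{2\sigma+2}}^{2\sigma+2} \leq \Lambda^{-(2\sigma+2)}\|\nabla f\|_{L^2}^{2\sigma+2}$, one gets
\begin{equation*}
I[f] \;\geq\; \|\nabla f\|_{L^2}^2\Bigl(1 - \Lambda^{-(2\sigma+2)}\|\nabla f\|_{L^2}^{2\sigma}\Bigr),
\end{equation*}
so it suffices to check that $\|\nabla f\|_{L^2}^{2\sigma}<\Lambda^{2\sigma+2}$. Raising the hypothesis $\|\nabla f\|_{L^2}^2 < 2p = \frac{\sigma}{\sigma+1}\Lambda^{\frac{2\sigma+2}{\sigma}}$ to the power $\sigma$ gives
\begin{equation*}
\|\nabla f\|_{L^2}^{2\sigma} \;<\; \Bigl(\tfrac{\sigma}{\sigma+1}\Bigr)^{\sigma}\Lambda^{2\sigma+2} \;<\; \Lambda^{2\sigma+2},
\end{equation*}
since $\sigma/(\sigma+1)<1$. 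Combined with $f\neq 0$, this forces $I[f]>0$, completing the verification that $f\in\mathcal W$.

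There is no substantial obstacle here: the statement is a direct quantitative comparison, and the only ingredients needed are the explicit value of $p$ from Lemma \ref{lem:p} and the defining inequality of the Sobolev constant $\Lambda$. The one point worth noting is the use of the sign of $g$ in bounding $E[f]$ from above by $\frac{1}{2}\|\nabla f\|_{L^2}^2$, which is consistent with the standing assumption $g>0$ of this subsection.
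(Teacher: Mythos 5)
Your proof is correct and follows essentially the same route as the paper: both verify $E[f]<p$ directly from $\|\nabla f\|_{L^2}^2<2p$ and then obtain $I[f]>0$ by combining the Sobolev inequality \eqref{eq:sobcon} with the explicit value \eqref{eq:plamd} of $p$ (your raising of the hypothesis to the power $\sigma$ is the same comparison the paper makes via $\sqrt{2p}<\Lambda^{(\sigma+1)/\sigma}$). The extra care you take with the case $f=0$ and with strictness of $I[f]>0$ is a minor refinement, not a different argument.
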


\begin{proof}
 Notice that the condition $\| \nabla f \|_{L^2} < \sqrt{2p}$ implies that 
	\begin{equation*}
	 E[f] < p - \frac{1}{2\sigma + 2} \| f\|_{L^{2\sigma}}^{2\sigma + 2} < p.
	\end{equation*}
 On the other hand, using \eqref{eq:plamd},\eqref{eq:sobcon} and 
	\begin{equation*}
	 		\| \nabla f \|_{L^2} < \sqrt{2p} < \Lambda^{\frac{\sigma +1}{\sigma}},
	 \end{equation*} 
	 we obtain
	 \begin{equation*}
	 		\| f \|_{L^{2\sigma + 2}}^{2\sigma + 2} \leq \Lambda^{-(2\sigma + 2)} \left( \int_\Omega |\nabla f|^2\right)^{\sigma + 1} < \| \nabla f\|_{L^2}^2.
	 \end{equation*}
	 This implies that $I[f] > 0$ and consequently that $f \in \mathcal{W}$. 
\end{proof}

 \subsection{Potential well method in $\R^d$.}\label{sec:potK}

 In this subsection, we present some preliminary results regarding the potential well method in the whole Euclidean space $\R^d$. This argument was introduced in \cite{HoRo07} to prove a sufficient condition for the global existence of the Nonlinear Schr\"odinger equation. 
 \newline
We suppose that $\frac{2}{d} < \sigma < \frac{2}{(d-2)^+}$ and $g>0$. We fix $g = 1$ without losing generality. We recall that the best constant in the Gagliardo-Nirenberg inequality
\begin{equation}\label{eq:GN}
	\| f \|_{L^{2\sigma + 2}}^{2\sigma + 2} \leq C_{GN} \| \nabla f\|_{L^2}^{d\sigma} \| f\|_{L^2}^{2 + 2\sigma - d\sigma}
\end{equation}
is given by 
\begin{equation*}
C_{GN} = W(Q) = \frac{\| Q\|_{L^{2\sigma + 2}}^{2\sigma + 2}}{ \| \nabla Q\|_{L^2}^{d\sigma} \| Q\|_{L^2}^{2 + 2\sigma - d\sigma}}
\end{equation*}
where $Q$ is the unique positive solution of the elliptic equation 
\begin{equation*}
	-\Delta Q + Q - Q^{2\sigma + 1} = 0. 
\end{equation*}
In \cite{We82}, this profile is found as the maximizer of the following problem
\begin{equation*}
	W_{max} = \sup \{ W(f)\, : \, f \in H^1(\R^d), \, f \neq 0\}.
\end{equation*}
We define the potential well as the set 
\begin{equation*}
	\mathcal K = \{ f \in H^1(\R^d): \, E[f] \| f\|_{L^2}^{2\alpha} < E[Q] \| Q\|_{L^2}^{2\alpha}, \ \|\nabla f\|_{L^2}^2 \| f\|_{L^2}^{2\alpha} < \|\nabla Q\|_{L^2}^2 \| Q\|_{L^2}^{2\alpha} \},
\end{equation*}
where
\begin{equation*}
	\alpha = \frac{2 - (d - 2)\sigma}{d\sigma - 2}
\end{equation*}
and $Q$ is the ground state defined above. 
\newline
Finally, we observe that 
	\begin{equation}\label{eq:cGN}
		C_{GN} = \frac{2\sigma + 2}{d\sigma} \left( \| \nabla Q\|_{L^2}\| Q\|_{L^2}^\alpha \right)^{- d\sigma -2}.
	\end{equation}
Indeed, from the Pohozaev's identities \eqref{eq:poh1}, \eqref{eq:poh2} it follows that
 	\begin{equation*}
 		\| Q\|_{L^2}^2 = \frac{2 - (d - 2)\sigma}{d\sigma} \| \nabla Q\|_{L^2}^2 = \frac{2 - (d - 2)\sigma}{2\sigma + 2} \| \nabla Q\|_{L^{2\sigma + 2}}^{2\sigma + 2} = \frac{4 - 2(d - 2)\sigma}{d\sigma - 2} E[Q]. 
 	\end{equation*}
	This implies \eqref{eq:cGN}.

\section{Existence of Solutions} \label{sec:exis}

In this section, we are going to prove our local and global well-posedness results.
As already mentioned in the Introduction, there is a delicate interplay between (the mathematical difficulties determined by) the power-type nonlinearity and the nonlocal term involving $\mu[u]$. More precisely, it does not seem possible to prove Theorem \ref{thm:lwp_intro} by a general contraction argument. Indeed, the presence of the nonlocal term would require setting up the fixed point in Sobolev spaces, say $L^\infty_tH^1_0$ for instance. On the other hand, the power-type nonlinearity is not locally Lipschitz in Sobolev spaces for low values of $\sigma$, say $0<\sigma<\frac12$, see Remark \ref{rem:lip}. 
For this reason, we are going to provide three different proofs of the local well-posedness.
\newline
The first one, stated in Theorem \ref{thm:proof2} below, involves a two step procedure. First, we find a solution to our problem for any data in $H^2(\Omega) \cap H^1_0(\Omega) $. Then, by a density argument, we show the our problem is locally well posed in $H^1_0(\Omega)$. This proof is well suited when dealing with a bounded domain due to the compact embedding $H^1(\Omega) \hookrightarrow L^2(\Omega)$.
\newline
The second proof, stated in Theorem \ref{thm:proof1}, is based on a fixed point argument and crucially relies on smoothing properties of the heat semi-group, see Proposition \ref{prop:smooth_par}. This proof requires the nonlinearity to satisfy the condition $\sigma<\frac{1}{(d-2)^+}$. 
\newline
The third proof relies on the classical fixed point argument. To overcome the difficult interplay between the nonlocal term and the power-type nonlinearity, a lower bound on $\sigma$ is required. In particular, for $\sigma \geq \frac{1}{2}$, the power-type nonlinearity is locally Lipschitz in Sobolev spaces and we perform a contraction argument using the natural and stronger distance. We remark that unifying the results of the second and third proof, we cover all the energy subcritical range $0 < \sigma < \frac{2}{(d-2)^+}$ for $d \leq 4$. 
\newline
Finally in subsection \ref{ss:gwp} we globally extend the previous results, under the assumptions on the nonlinear term stated in Theorem \ref{thm:gwp_intro} above. Then we provide additional sufficient conditions on the initial data for global existence, as stated in Theorem \ref{thm:pot1} and Theorem \ref{thm:potK}, using the potential well method.

\subsection{Local existence via the Schauder fixed point theorem.} 

In this subsection, we will prove the local well-posedness for bounded domains and the entire energy subcritical case $0 < \sigma < \frac{2}{(d - 2)^+}$.

\begin{theorem}\label{thm:proof2}
		Let $0<\sigma < \frac{2}{(d-2)^+}$, $\Omega\subset\R^d$ be a regular bounded domain. For any $u_0 \in H^1_0(\Omega)$, there exists a maximal time of existence $T_{max}>0$ and a unique solution $u \in C([0,T_{max}), H^1_0(\Omega))$ to \eqref{eq:heat}. Moreover, either $T_{max} = \infty$, or $T_{max} < \infty$ and
		\begin{equation}\label{eq:bu1}
			\lim_{t \rightarrow T_{max}} \| \nabla u(t)\|_{L^2} = \infty.
		\end{equation}
\end{theorem}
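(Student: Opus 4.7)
The plan is a two-stage Schauder argument: first construct strong solutions for data $u_0 \in H^2(\Omega) \cap H^1_0(\Omega)$ via Schauder's theorem in a suitable subset of $C([0,T], H^1_0(\Omega))$, then extend to general $u_0 \in H^1_0(\Omega)$ by approximation. The crucial ingredient tied to bounded domains is the compactness of the Sobolev embedding $H^1_0(\Omega) \hookrightarrow L^p(\Omega)$ for $p$ strictly below the critical exponent, which provides both precompactness in the Schauder step and limit-passage in the approximation step.

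For the first stage, given $u_0 \in H^2(\Omega) \cap H^1_0(\Omega)$ with $u_0 \not\equiv 0$ (the zero case being trivial), introduce the closed convex set
\begin{equation*}
K = \left\{ w \in C([0,T], H^1_0(\Omega)) \,:\, \sup_t \|w(t)\|_{H^1} \leq M, \ \|w(t)\|_{L^2} \geq \tfrac{1}{2}\|u_0\|_{L^2} \right\},
\end{equation*}
with $M, T>0$ to be fixed later. Given $w \in K$, define $\Phi(w) = v$ where $v$ solves the inhomogeneous linear heat problem $\partial_t v = \Delta v + f(w)$ with $v(0) = u_0$ and Dirichlet boundary data, where $f(w) = g|w|^{2\sigma} w + \mu[w]\,w$. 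In the subcritical range, both $|w|^{2\sigma} w$ and the scalar $\mu[w]$ can be controlled in appropriate Lebesgue norms via Sobolev embeddings (with the uniform $L^2$ lower bound on $K$ taming the denominator of $\mu[\cdot]$). Parabolic regularity together with the smoothing estimates of Propositions~\ref{prp:lplq}--\ref{prop:smooth_par} produces
\begin{equation*}
v \in L^2([0,T], H^2(\Omega)) \cap H^1([0,T], L^2(\Omega)) \cap C([0,T], H^1_0(\Omega))
\end{equation*}
with quantitative bounds, and shrinking $T$ ensures $\Phi(K) \subset K$.

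Applying Schauder's theorem requires continuity of $\Phi$ on $K$ and precompactness of $\Phi(K)$ in $C([0,T], H^1_0(\Omega))$. Precompactness follows from an Aubin-Lions/Arzel\`a-Ascoli argument: $\Phi(K)$ is bounded in $L^2([0,T], H^2) \cap H^1([0,T], L^2)$, the compact embedding $H^2(\Omega) \hookrightarrow H^1(\Omega)$ available on bounded $\Omega$ provides the spatial compactness, and interpolating the $L^2$-in-time bound on $\partial_t v$ against the $L^2(H^2)$ bound yields uniform equicontinuity in $H^1$. Continuity of $\Phi$ reduces to continuity of $w \mapsto f(w)$ in the appropriate integrability class, standard from $H^1$-convergence combined with the uniform lower bound on $\|w\|_{L^2}$ that makes $\mu[\cdot]$ continuous on $K$. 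Schauder's theorem then delivers a fixed point, i.e., a strong solution for $u_0 \in H^2 \cap H^1_0$.

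For the second stage, approximate $u_0 \in H^1_0$ by $u_0^n \in H^2 \cap H^1_0$ with $u_0^n \to u_0$ in $H^1$. The stage-one solutions $u_n$ exist on a common interval $[0, T]$ since the lifespan depends only on an upper bound for $\|u_0^n\|_{H^1}$. The $L^2$-conservation $\|u_n(t)\|_{L^2} = \|u_0^n\|_{L^2}$ together with the dissipation identity of Proposition~\ref{prop:upgrade} yields uniform bounds in $L^\infty([0,T], H^1_0) \cap L^2([0,T], H^2) \cap H^1([0,T], L^2)$, and Aubin-Lions extracts a subsequence converging strongly in $C([0,T], L^p)$ for subcritical $p$ and weakly in $L^\infty(H^1_0)$, enough to pass to the limit in $f(\cdot)$ and obtain a solution $u \in C([0,T], H^1_0)$ with datum $u_0$. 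Uniqueness follows from a differential inequality for $\|u - \tilde u\|_{L^2}^2$ using the local Lipschitz behavior of $f$ on $H^1$-bounded sets together with $L^2$-conservation, and the blow-up alternative \eqref{eq:bu1} is the standard continuation argument, since the local lifespan depends only on $\|u(t)\|_{H^1}$. The main obstacle I anticipate is coordinating continuity and precompactness of $\Phi$ at the $H^1$ level while controlling the nonlocal term $\mu[\cdot]$; the compact embedding $H^2 \hookrightarrow H^1$ on bounded $\Omega$ is the pivotal fact that enables the Schauder argument and distinguishes this proof from the $\R^d$ case.
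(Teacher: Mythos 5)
Your overall two-stage architecture (Schauder for $H^2\cap H^1_0$ data, then density to reach $H^1_0$) is the same as the paper's, but as written the Schauder step has a genuine gap: your set $K$ contains the constraint $\|w(t)\|_{L^2}\geq \tfrac12\|u_0\|_{L^2}$, and the exterior of a ball in $L^2$ is \emph{not convex} (take $w$ and $-w$ on the sphere; their midpoint is $0$), so Schauder's theorem does not apply on $K$. You cannot simply drop that constraint either, because it is what keeps the denominator of $\mu[\cdot]$ away from zero. This is exactly the obstruction the paper works around: it replaces $\mu[u]$ by $\mu_\alpha[u]$ with the fixed constant $\alpha=\|u_0\|_{L^2}^2$ in the denominator (problem \eqref{eqMuAlpha}), runs Schauder on a convex ball of $C([0,T],H^1_0)$, and only afterwards shows (Proposition \ref{prpEquivalence}) that the modified flow conserves the $L^2$-norm, so $\mu_\alpha[u]=\mu[u]$ and the two Cauchy problems coincide. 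Without some such device your fixed-point domain is not admissible.

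A second problem is the compactness estimate for $\Phi$. You freeze the \emph{entire} nonlinearity, $\Phi(w)=v$ solving the linear heat equation with source $f(w)=g|w|^{2\sigma}w+\mu[w]w$, and claim $\Phi(K)$ is bounded in $L^2([0,T],H^2)\cap H^1([0,T],L^2)$ by parabolic regularity. But elements of $K$ are only bounded in $H^1$, and $\||w|^{2\sigma}w\|_{L^2_x}=\|w\|_{L^{4\sigma+2}}^{2\sigma+1}$ is controlled by $\|w\|_{H^1}$ only when $4\sigma+2\leq \frac{2d}{d-2}$, i.e. $\sigma\leq\frac{1}{(d-2)^+}$; for $\frac{1}{(d-2)^+}<\sigma<\frac{2}{(d-2)^+}$ the source is not in $L^2_tL^2_x$ and your quantitative bounds (hence the Aubin--Lions compactness and the equicontinuity in $H^1$) are not available. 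The paper avoids this by freezing only the nonlocal coefficient: $F_{u_0}(u)=v$ solves the \emph{semilinear} problem $\partial_t v=\Delta v+g|v|^{2\sigma}v+\mu_\alpha[u]v$, which is solved by contraction with the space-time estimates of Proposition \ref{prp:sptime} (Lemma \ref{lem:fixed}), and the $H^2$ bound on the image comes from persistence of regularity for $H^2$ initial data (estimate \eqref{eqCont2}), not from integrability of the frozen source. Finally, a minor point: your uniqueness step invokes ``local Lipschitz behavior of $f$ on $H^1$-bounded sets,'' which fails from $H^1$ to $L^2$ for small $\sigma$ (cf. Remark \ref{rem:lip} and the restriction in \eqref{eq:lem2}); the correct argument is the $L^2$ Gronwall estimate of Proposition \ref{prp:uni}, which uses the pointwise bound $\bigl||u|^{2\sigma}u-|v|^{2\sigma}v\bigr|\lesssim(|u|^{2\sigma}+|v|^{2\sigma})|u-v|$ and absorbs the resulting gradient terms into the dissipation $-\|\nabla(u-v)\|_{L^2}^2$.
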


The proof aims at avoiding the use of the fixed point argument on the integral formulation of \eqref{eq:heat}. Due to the interplay between the nonlinearities, this standard argument  does not work for $\sigma < \frac{1}{2}$, see Remark \ref{rem:lip} below. 

 In particular, the steps of this proof are as follows.
\begin{enumerate}
    \item We show the existence of solutions for the model where the nonlinearity $\mu[u]$ is substituted by  a function $\lambda \in L^\infty[0,\infty)$ and gather estimates on the $H^1$ norm of this model. (Lemma \ref{lem:fixed} and Corollary \ref{prp:it_time}).
    \item We employ the Schauder fixed point argument. Here we need extra regularity of the initial data $u_0 \in H^2 \cap H^1_0$ to obtain the compactness of the image. Moreover, we have to substitute the $L^2$-norm in the denominator of $\mu$ by a constant to have a convex domain of the map. (Proposition \ref{prpSchauder}, equation \eqref{eqMuAlpha}).
    \item We use a density argument to show the existence of solutions for $u_0 \in H^1_0$. (Theorem \ref{thmLocExisDom})
    \item Finally, we prove the equivalence of the problem \eqref{eq:heat} and \eqref{eqMuAlpha} where the constant at step $2)$ is chosen to be the square of the $L^2$ norm of the initial condition. (Proposition \ref{prpEquivalence}).
    \item We show the uniqueness of the solution in Proposition \ref{prp:uni}.
\end{enumerate}

 The compact embedding $H^1(\Omega) \hookrightarrow L^2(\Omega)$ will be crucial to guarantee the compactness of the image via the Aubin-Lions theorem \cite[Theorem II.$5.16$]{BoFa12}. \newline
To apply the Schauder fixed point theorem, we will need the following lemma which considers \eqref{eq:heat} where $\mu[u]$ is replaced by a bounded function of time.

\begin{lem}\label{lem:fixed}
		Let $0<\sigma < \frac{2}{(d-2)^+}$, $u_0\in H^1_0(\Omega)$, $\lambda \in L^\infty(\R)$. Then there exists $0<T=T(\|u_0\|_{H^1_0}, \|\lambda\|_{L^\infty})$ such that the Cauchy problem 
		\begin{equation}\label{eq:heat_mu}
			\begin{cases}
				& \partial_t u = \Delta u + g |u|^{2\sigma} u + \lambda u, \\
				& u_{|\partial\Omega} = 0,\\
				& u(0) = u_0,
			\end{cases}
		\end{equation}	
		admits a unique local-in-time solution $u \in C([0,T], H^1_0(\Omega))$. 
        Moreover, if $u_0 \in H^2(\Omega)$, then  $u \in C([0,T], H^1_0(\Omega))$.
\end{lem}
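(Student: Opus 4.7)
My plan is to remove the linear potential $\lambda u$ by a time-dependent exponential rescaling and then invoke the classical local well-posedness of the semilinear heat equation in $H^1_0$ in the energy-subcritical regime. Specifically, set $\Lambda(t) = \int_0^t \lambda(s)\,ds$ (bounded on bounded time intervals because $\lambda \in L^\infty$) and define $v(t) = e^{-\Lambda(t)} u(t)$. A direct computation shows that $u$ solves \eqref{eq:heat_mu} on $[0,T]$ if and only if $v$ solves
\begin{equation*}
\partial_t v = \Delta v + G(t)|v|^{2\sigma} v, \qquad v_{|\partial\Omega}=0, \qquad v(0)=u_0,
\end{equation*}
where $G(t) = g e^{2\sigma \Lambda(t)}$ is bounded on $[0,T]$ with norm controlled by $g,\sigma,T,\|\lambda\|_{L^\infty}$. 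The map $u \leftrightarrow v$ is a bicontinuous isomorphism of $C([0,T],H^1_0(\Omega))$ and preserves the $H^2$–regularity, so it suffices to solve the reduced problem.

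\textbf{Fixed point for $v$.} For the reduced equation I would run the standard contraction mapping on the Duhamel formula
\begin{equation*}
\mathcal N(v)(t) = e^{t\Delta} u_0 + \int_0^t e^{(t-s)\Delta} G(s)\,|v(s)|^{2\sigma} v(s)\,ds
\end{equation*}
on a small ball of $C([0,T],H^1_0(\Omega)) \cap L^q([0,T],L^r(\Omega))$, where $(q,r)$ is an admissible Strichartz-type pair from Proposition \ref{prp:sptime} chosen, via Sobolev embedding, so that the nonlinearity $G|v|^{2\sigma}v$ is controlled in a dual norm $L^{\rho'}_tL^{\gamma'}_x$. Because $\|G\|_{L^\infty([0,T])}$ enters the estimates only as a multiplicative constant, the usual subcritical choice of exponents for $0<\sigma<\frac{2}{(d-2)^+}$ works, and one obtains a unique local solution on a time interval $T = T(\|u_0\|_{H^1_0},\|\lambda\|_{L^\infty})>0$. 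Continuity in $H^1_0$ follows by strong continuity of $e^{t\Delta}$ and dominated convergence applied to the Duhamel integral; uniqueness follows from the same Lipschitz estimate applied to the difference of two putative solutions on a common subinterval.

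\textbf{Higher regularity and the main obstacle.} If $u_0 \in H^2(\Omega)\cap H^1_0(\Omega)$, I would bootstrap. Once the $H^1_0$ solution $v$ is in hand, $|v|^{2\sigma}v\in C([0,T],L^2(\Omega))$ (by Sobolev embedding in the subcritical range), so $G\,|v|^{2\sigma}v \in L^2([0,T],L^2(\Omega))$; Proposition \ref{prop:smooth_par} (together with standard maximal regularity for the heat semigroup on $H^2\cap H^1_0$) then promotes the Duhamel representation to $v \in C([0,T],H^2\cap H^1_0)$, and the inverse transform $u=e^{\Lambda(t)}v$ inherits the same regularity. The principal obstacle, and the reason the proof cannot proceed purely in $C([0,T],H^1_0)$, is that for $\sigma<1/2$ the nonlinearity $w\mapsto |w|^{2\sigma}w$ is not locally Lipschitz from $H^1_0$ to itself; one is forced to close the contraction in a mixed space-time norm, which is precisely where the smoothing estimates of Proposition \ref{prp:sptime} carry the argument. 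Bypassing the $\mu[u]$-term through the reduction to a bounded coefficient $G(t)$ is what makes this classical machinery directly applicable here.
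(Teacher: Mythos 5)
Your gauge transformation $v=e^{-\Lambda(t)}u$, $\Lambda(t)=\int_0^t\lambda(s)\,ds$, is correct and does reduce \eqref{eq:heat_mu} to $\partial_t v=\Delta v+G(t)|v|^{2\sigma}v$ with $G$ bounded in terms of $g,\sigma,T,\|\lambda\|_{L^\infty}$; after that your plan is essentially the paper's own argument (Duhamel plus the space-time estimates of Proposition \ref{prp:sptime} and a contraction), the paper simply carrying the harmless term $\lambda u$ along instead of gauging it away. However, as written your fixed-point step has a genuine gap in the upper part of the subcritical range. You take the ball in $C([0,T],H^1_0)\cap L^q_tL^r_x$, with the nonlinearity estimated in a dual norm $L^{\rho'}_tL^{\gamma'}_x$ via Proposition \ref{prp:sptime}. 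But Proposition \ref{prp:sptime} contains no gradient smoothing, so to show that $\nabla\mathcal N(v)$ stays bounded in $L^\infty_tL^2_x$ you must estimate $\nabla(G|v|^{2\sigma}v)\sim G|v|^{2\sigma}\nabla v$ in a dual norm, and this requires the ball to carry a bound on $\nabla v$ in the mixed norm, i.e.\ a ball in $L^\infty_tH^1\cap L^q_tW^{1,r}_x$ (this is exactly the paper's space $\mathcal Y$); the alternative of putting $G|v|^{2\sigma}v$ in $L^2_tL^2_x$ and using Proposition \ref{prop:smooth_par} only works for $\sigma\le\frac{1}{(d-2)^+}$. Relatedly, you correctly note that for $\sigma<\tfrac12$ the map $w\mapsto|w|^{2\sigma}w$ is not locally Lipschitz at the $H^1$ level, but you never resolve this: the resolution (Kato's trick, used explicitly in the paper) is to keep the strong norms only in the definition of the ball and to close the contraction in the weaker metric $\|u-v\|_{L^\infty_tL^2}+\|u-v\|_{L^q_tL^r}$, where only the derivative-free difference estimate $\bigl||u|^{2\sigma}u-|v|^{2\sigma}v\bigr|\lesssim(|u|^{2\sigma}+|v|^{2\sigma})|u-v|$ is needed, and then to identify the fixed point as an element of $C([0,T],H^1_0)$. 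Without specifying this two-norm structure, "close the contraction in a mixed space-time norm" does not go through for small $\sigma$.

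The second concrete problem is the $H^2$-persistence bootstrap. You claim $|v|^{2\sigma}v\in C([0,T],L^2(\Omega))$ "by Sobolev embedding in the subcritical range", but this needs $H^1\hookrightarrow L^{4\sigma+2}$, i.e.\ $\sigma\le\frac{1}{(d-2)^+}$; for $d\ge3$ and $\frac{1}{d-2}<\sigma<\frac{2}{d-2}$ the embedding fails, so this step breaks precisely in part of the range the lemma covers (the paper only asserts that persistence of regularity is standard, but the standard argument goes through parabolic smoothing/iteration to higher integrability, not through the $L^2_tL^2_x$ bound you propose). Both gaps are repairable — enlarge the ball to include $L^q_tW^{1,r}_x$, contract in the weaker metric, and run a genuine smoothing bootstrap for $H^2$ data — but they must be fixed for the argument to cover all of $0<\sigma<\frac{2}{(d-2)^+}$.
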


\begin{proof}

		The proof follows from the contraction principle. Let $T,M > 0$, to be chosen later, let $\rho = 2\sigma + 2$ and $\gamma\in \R$ be such that the pair $(\gamma,\rho)$ satisfies condition \eqref{eq:qrcon}, i.e.
		\begin{equation*}
 \frac 2 \gamma + \frac d \rho = \frac d 2.
 \end{equation*}
		We define the space
		\begin{equation*}
		\begin{split}
		 	\mathcal{Y} = \{ u \in L^\infty([0,T), H^1_0(\Omega)) \cap L^\gamma([0,T), W^{1,\rho}(\Omega)), \\ \| u\|_{L^\infty([0,T),H^1)} \leq M, \, \| u\|_{L^\gamma([0,T),W^{1,\rho})} \leq M\},
		\end{split}
		\end{equation*}
		endowed with the following distance
		\begin{equation}\label{eq:dist}
		 d(u,v) = \| u - v\|_{L^\infty([0,T),L^2)} + \| u - v\|_{L^\gamma([0,T),L^\rho)}.
		\end{equation}
		Notice that $(\mathcal Y, d)$ defines a complete metric space. 
		Given $u_0 \in H^1_0(\Omega)$ and $\lambda \in L^\infty(\R)$, we consider the map
\begin{equation}\label{eq:fix_map}
\mathcal G (u) = e^{t\Delta} u_0 + \int_0^t e^{(t-s)\Delta} 
(g |u(s)|^{2\sigma} u(s) + \lambda(s) u(s))ds.
\end{equation}
Our goal is to prove that $\mathcal G:\mathcal Y \rightarrow \mathcal Y$ is a contraction with the distance defined in \eqref{eq:dist}.
By using the space-time estimates \eqref{eq:sptime1}, \eqref{eq:sptime2} and then Holder's inequality and Sobolev's embedding, it follows that there exist two constants $K_0 >0$ and $K(M, \| \lambda\|_{L^\infty})>0$ such that
\begin{equation*}
\begin{split}
 &	\| \mathcal G (u)\|_{L^\infty([0,T),H^1)} + \| \mathcal G (u) \|_{L^\gamma([0,T),W^{1,\rho})} \\
 	&\leq K_0\| u_0\|_{H^1} + \left\| \int_0^t e^{(t - s) \Delta} (g |u(s)|^{2\sigma} u(s) + \lambda(s) u(s)) ds \right\|_{L^\gamma([0,T],W^{1,\rho})} \\ 
 	&\leq K_0\| u_0\|_{H^1} + C\left( \| |u|^{2\sigma} u\|_{L^{\gamma'}([0,T],W^{1,\rho'})} + \|\lambda\|_{L^\infty[0,T]} \| u\|_{L^{1}([0,T],H^1)}\right) \\ 
 	&\leq K_0\| u_0\|_{H^1} + CT^{\frac{\gamma -\gamma'}{\gamma \gamma'}}\| u\|^{2\sigma + 2}_{L^{\infty}([0,T],L^\rho)} \| u\|_{L^{\gamma}([0,T],W^{1,\rho})} \\ 
 	&+ CT \|\lambda\|_{L^\infty[0,T]} \| u\|_{L^{\infty}([0,T],H^1)}\\
 &\leq K_0\| u_0\|_{H^1} + K\left(T + T^{\frac{\gamma -\gamma'}{\gamma \gamma'}}\right)M . 
\end{split}
\end{equation*}	
	Moreover, for any $u,v \in \mathcal Y$, we deduce that	
\begin{equation*}
		d \left( \mathcal G (u), \mathcal G (v)\right) \leq K\left(T + T^{\frac{\gamma - \gamma'}{\gamma \gamma'}}\right) d(u,v).
\end{equation*}	
Let us notice that 
\begin{equation*}
 \frac{\gamma - \gamma'}{\gamma \gamma'} = \frac{2 - (d-2) \sigma}{(2\sigma + 2)} > 0.
\end{equation*}
Thus, given any $u_0 \in H^1_0(\Omega)$, we choose
\begin{equation*}
 M=2K_0\|u_0\|_{H^1}
\end{equation*}
		and we choose $T$ small enough so that
		\begin{equation*}
		 \left(T + T^{\frac{\gamma - \gamma'}{\gamma \gamma'}}\right) K \leq \frac{1}{2}.
		\end{equation*}
 Note that $T$ depends only on $\| u_0\|_{H^1} $ and $\| \mu \|_{L^\infty[0,T)}$. It follows that $\mathcal G (u) \in \mathcal{Y}$ and 
 \begin{equation*}
 d \left( \mathcal G (u), \mathcal G (v)\right) \leq\frac12 d(u,v).
 \end{equation*}
 In particular, $\mathcal G$ is a contraction in $\mathcal{Y}$. By Banach's fixed-point Theorem, $\mathcal G$ has a unique fixed point $u \in \mathcal Y$. $u$ solves the integral equation 
 \begin{equation*}
 u= e^{t\Delta} u_0 + \int_0^t e^{(t-s)\Delta} (g |u(s)|^{2\sigma} u(s) + \lambda(s) u(s))ds,
 \end{equation*}
 and $u \in C([0,T], H^1_0(\Omega))$. \newline
 The persistence of regularity is a standard result.
\end{proof}

For any $\alpha > 0$, we define
\begin{equation*}
    \mu_{\alpha}[u] = \frac{\| \nabla u \|_{L^2}^2 - g\| u\|_{L^{2\sigma + 2}}^{2\sigma + 2}}{\alpha}.
\end{equation*}
The following corollary immediately follows from the previous lemma.
\begin{cor}\label{prp:it_time}
		Let $0 < \sigma < \frac{2}{(d-2)^+}$. Let $\lambda \in L^\infty(\R)$. Let $u_0 \in H^1_0(\Omega)$ and let $u \in C([0,T], H^1_0(\Omega))$ be the corresponding local solution to \eqref{eq:heat_mu}. Then there exists a time $0<T^*=T^*(\|u_0\|_{H^1},\| \lambda\|_{L^\infty}) \leq T$, so that
		\begin{equation}\label{eq:con}
			\sup_{t \in [0,T^*]} \| u(t)\|_{H^1} \leq 2 \| u_0\|_{H^1}. 
		\end{equation} 
		Moreover, there exists a constant $\mathcal{C} >0$, depending only on $\| u_0\|_{H^1}$, so that
		\begin{equation}\label{eq:mub}
			\sup_{[0,T^*]} \mu_\alpha[u(t)] \leq \mathcal{C}(\| u_0\|_{H^1}). 
		\end{equation} 
    Finally, if $u_0 \in H^2(\Omega) \cap H^1_0(\Omega) $ then
    \begin{equation}\label{eqCont2}
        \sup_{t \in [0,T^*]} \| u(t)\|_{H^2} \leq 2 \| u_0\|_{H^2}. 
    \end{equation}
\end{cor}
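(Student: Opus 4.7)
The corollary is a quantitative refinement of Lemma~\ref{lem:fixed}: the fixed-point scheme there delivers $\|u\|_{L^\infty_tH^1}\le M:=2K_0\|u_0\|_{H^1}$ on a time interval $[0,T^0]$ determined by $\|u_0\|_{H^1}$ and $\|\lambda\|_{L^\infty}$, but with a multiplicative constant $K_0$ inherited from the mixed $L^\infty_tH^1\cap L^\gamma_tW^{1,\rho}$ norm used to set up the contraction. The plan is to tighten this constant from $2K_0$ down to $2$ by isolating the $L^\infty_tH^1$ component of the estimate and exploiting the fact that the Dirichlet heat semigroup is a contraction on $H^1_0(\Omega)$. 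Once the $H^1$ bound \eqref{eq:con} is obtained, \eqref{eq:mub} follows immediately from a Sobolev embedding, and \eqref{eqCont2} is obtained by repeating the scheme in the domain of the Dirichlet Laplacian.

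\textbf{$H^1$ bound.} Start from the Duhamel identity
\[
u(t)=e^{t\Delta}u_0+\int_0^te^{(t-s)\Delta}\bigl(g|u|^{2\sigma}u+\lambda u\bigr)\,ds
\]
for the solution $u$ provided by Lemma~\ref{lem:fixed}. Since the Dirichlet heat semigroup is a contraction on both $L^2$ and $H^1_0$ (the latter because $\frac{d}{dt}\|\nabla e^{t\Delta}u_0\|_{L^2}^2=-2\|\Delta e^{t\Delta}u_0\|_{L^2}^2\le 0$ on $H^1_0\cap H^2$, extended by density), one has $\|e^{t\Delta}u_0\|_{H^1}\le\|u_0\|_{H^1}$. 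The nonlinear Duhamel term admits the estimate $\le K(T+T^{\vartheta})M$ in the $L^\infty_tH^1$ norm, with $\vartheta=\tfrac{2-(d-2)\sigma}{2\sigma+2}>0$ and $K$ polynomial in $M$ and $\|\lambda\|_{L^\infty}$, by exactly the same computation (smoothing plus Hölder plus Sobolev) as in the proof of Lemma~\ref{lem:fixed}. Choosing $T^*\le T^0$ small enough—depending only on $\|u_0\|_{H^1}$ and $\|\lambda\|_{L^\infty}$—so that $K(T^*+(T^*)^{\vartheta})M\le\|u_0\|_{H^1}$ then gives \eqref{eq:con}.

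\textbf{$\mu_\alpha$ and $H^2$ bounds.} From \eqref{eq:con} and the Sobolev embedding $H^1_0(\Omega)\hookrightarrow L^{2\sigma+2}(\Omega)$ (valid on the range $\sigma<\tfrac{2}{(d-2)^+}$), both $\|\nabla u(t)\|_{L^2}^2$ and $\|u(t)\|_{L^{2\sigma+2}}^{2\sigma+2}$ are bounded by quantities polynomial in $\|u_0\|_{H^1}$; dividing by the fixed positive constant $\alpha$ yields \eqref{eq:mub}. For \eqref{eqCont2}, the persistence of regularity in Lemma~\ref{lem:fixed} places $u$ in $C([0,T^0],H^2)$ whenever $u_0\in H^2\cap H^1_0$; the plan is to rerun the $H^1$ argument with the equivalent norm $\|\Delta\cdot\|_{L^2}$ on $H^2\cap H^1_0$ (the equivalence being a consequence of $\partial\Omega\in C^2$), using that $\|\Delta e^{t\Delta}u_0\|_{L^2}=\|e^{t\Delta}\Delta u_0\|_{L^2}\le\|\Delta u_0\|_{L^2}$ for the semigroup part and parabolic maximal regularity for the Duhamel part, with the nonlinear source controlled in $L^2_tL^2$ by the $H^1$ bound just established and Sobolev embedding.

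\textbf{Main obstacle.} The only sensitive step is the $H^2$ estimate of the nonlinearity when $\sigma<1/2$: the map $u\mapsto|u|^{2\sigma}u$ is not classically twice differentiable at zero. The remedy is to avoid pointwise second spatial derivatives of the nonlinearity and to rely instead on parabolic maximal $L^2$-regularity, so that only an $L^2_tL^2$-type bound on $g|u|^{2\sigma}u+\lambda u$ is needed; such a bound follows from the $H^1$ control combined with Sobolev embedding (possibly at the cost of shrinking $T^*$ one final time), allowing the bootstrap to close and \eqref{eqCont2} to be deduced.
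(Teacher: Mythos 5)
Your treatment of \eqref{eq:con} and \eqref{eq:mub} is correct and follows essentially the paper's route, in fact in a more quantitative form: the paper merely notes that the existence time of Lemma \ref{lem:fixed} depends only on $\|u_0\|_{H^1}$ and $\|\lambda\|_{L^\infty}$ and invokes continuity to get \eqref{eq:con}, and it obtains \eqref{eq:mub} from the Gagliardo--Nirenberg/Sobolev bound $\mu_\alpha[u]\lesssim \|u\|_{H^1}^2+\|u\|_{H^1}^{2\sigma+2}$, exactly as you do. Your decomposition $\|e^{t\Delta}u_0\|_{H^1}\le\|u_0\|_{H^1}$ plus a Duhamel remainder of size $K(T+T^{\vartheta})M\le\|u_0\|_{H^1}$ is a clean way of making explicit why $T^*$ depends only on $\|u_0\|_{H^1}$ and $\|\lambda\|_{L^\infty}$.

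The gap is in your argument for \eqref{eqCont2} (a point the paper itself dispatches as standard persistence of regularity). Two concrete problems. First, maximal $L^2$-regularity with a source in $L^2([0,T^*],L^2)$ controls the Duhamel term only in $L^2([0,T^*],H^2)$ (and in $H^1([0,T^*],L^2)$, hence $C([0,T^*],H^1)$), not in $L^\infty([0,T^*],H^2)$, so it cannot deliver the pointwise-in-time bound $\sup_{[0,T^*]}\|u(t)\|_{H^2}\le 2\|u_0\|_{H^2}$. Second, the claim that $\|g|u|^{2\sigma}u\|_{L^2}$ is controlled by the $H^1$ bound plus Sobolev embedding fails on part of the admitted range: $\||u|^{2\sigma}u\|_{L^2}=\|u\|_{L^{4\sigma+2}}^{2\sigma+1}$, and $4\sigma+2>\frac{2d}{d-2}$ as soon as $\sigma>\frac{1}{d-2}$, $d\ge3$, whereas the corollary allows $\sigma<\frac{2}{(d-2)^+}$. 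The workable repair, consistent with the $H^2$ estimate the paper carries out inside the proof of Theorem \ref{thmLocExisDom}, is to put one (not two) derivatives on the source: $|u|^{2\sigma}u$ is $C^1$ with $\nabla(|u|^{2\sigma}u)=(2\sigma+1)|u|^{2\sigma}\nabla u$ even for $\sigma<\tfrac12$, so apply the smoothing estimate \eqref{eq:smooth} with $F=\nabla\bigl(g|u|^{2\sigma}u+\lambda u\bigr)$ and bound $\||u|^{2\sigma}\nabla u\|_{L^2([0,T^*],L^2)}$ by H\"older through $\|u\|_{L^\infty_t H^1}^{2\sigma}$ and the space-time norm $\|\nabla u\|_{L^q_tL^r}$ of an admissible pair, which is controlled by $M=2K_0\|u_0\|_{H^1}$ in the fixed-point space; this yields $\|\Delta u\|_{L^\infty([0,T^*],L^2)}\le\|\Delta u_0\|_{L^2}+C(\|u_0\|_{H^1},\|\lambda\|_{L^\infty})(T^*)^{\theta}$, and since $\|u_0\|_{H^1}\le\|u_0\|_{H^2}$ a final shrinking of $T^*$ depending only on $\|u_0\|_{H^1}$ and $\|\lambda\|_{L^\infty}$ gives \eqref{eqCont2}.
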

	
\begin{proof}
	The time of existence in Lemma \ref{lem:fixed} depends only on $\| u_0\|_{H^1} $ and $\| \lambda \|_{L^\infty[0,T]}$ and $u \in C([0,T], H^1_0(\Omega))$. So by continuity, we can find a time
	\begin{equation*}
		 T_1(\| u_0\|_{H^1},\| \lambda \|_{L^\infty[0,T_1]}) >0
	\end{equation*}
	such that \eqref{eq:con} is true. 
		The inequality \eqref{eq:mub} is a consequence of Gagliardo-Nirenberg's inequality 
		\begin{equation}\label{eq:mu_in}
			\mu_\alpha[u] \lesssim \| u \|^2_{H^1} + \| u\|^{2 + 2\sigma}_{H^1}.
		\end{equation}
	\end{proof}

Next, using Schauder fixed point theorem, we prove that a solution to \eqref{eq:heat} exists for $u_0 \in H^2(\Omega) \cap H^1_0(\Omega) $ and $\mu[u]$ is substituted by $\mu_\alpha[u] > 0$, with $\alpha = \| u_0 \|_{L^2}^2$. We will then show that the two Cauchy problems are equivalent. 
\begin{prop}\label{prpSchauder}
    For any $0 \neq u_0 \in H^2(\Omega) \cap H^1_0(\Omega)$, there exists a solution $u\in C([0,T),H^2(\Omega) \cap H^1_0(\Omega))$ to 
    \begin{equation}\label{eqMuAlpha}
    \begin{cases}
        \partial_t u = \Delta u + g|u|^{2\sigma} u + \mu_{\alpha}[u] u, \\
        u(0) = u_0,
    \end{cases}
\end{equation}
where $\| u_0 \|_{L^2}^2 = \alpha$.
\end{prop}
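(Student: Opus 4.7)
The plan is to invoke Schauder's fixed-point theorem for the map that sends a trial function $v$ to the solution of the linearized equation \eqref{eq:heat_mu} obtained by freezing the nonlocal coefficient at $\mu_\alpha[v]$. The reason for working with $\mu_\alpha$ rather than the genuine $\mu$ is that, since $\alpha = \|u_0\|_{L^2}^2 > 0$ is a fixed positive constant (well-defined as $u_0 \not\equiv 0$), the functional $v \mapsto \mu_\alpha[v]$ is defined on all of $H^1_0(\Omega)$ and the natural domain of candidate functions remains convex under convex combinations — something that would fail for $\mu[v]$ because of the $\|v\|_{L^2}^2$ in the denominator.

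Set $M_1 = 2\|u_0\|_{H^1}$ and $M_2 = 2\|u_0\|_{H^2}$. By the Gagliardo-Nirenberg bound in \eqref{eq:mu_in} there exists $\Lambda_0 = \Lambda_0(\|u_0\|_{H^1}, \alpha)$ with $|\mu_\alpha[w]| \leq \Lambda_0$ whenever $\|w\|_{H^1} \leq M_1$. Let $T^* > 0$ be a common existence time provided by Corollary \ref{prp:it_time} for \eqref{eq:heat_mu} with any forcing satisfying $\|\lambda\|_{L^\infty} \leq \Lambda_0$; on $[0,T^*]$ the resulting solution $u$ obeys $\|u\|_{L^\infty_t H^1} \leq M_1$ and, by \eqref{eqCont2} with $u_0 \in H^2$, also $\|u\|_{L^\infty_t H^2} \leq M_2$. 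Feeding these bounds together with Sobolev embeddings in the subcritical range back into the equation gives $\|\partial_t u\|_{L^\infty_t L^2} \leq C_0$ for some $C_0 = C_0(\|u_0\|_{H^2}, \Lambda_0)$. Define
\[
K = \Big\{ v \in C([0,T^*], H^1_0(\Omega)) \,:\, \sup_t \|v(t)\|_{H^2} \leq M_2, \ \|v(t)-v(s)\|_{L^2} \leq C_0|t-s| \text{ for all } t,s \Big\}.
\]
This set is convex and, by Arzelà-Ascoli combined with the compact embedding $H^1(\Omega) \hookrightarrow L^2(\Omega)$, compact in the Banach space $X := C([0,T^*], L^2(\Omega))$.

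For $v \in K$ set $\lambda_v(t) := \mu_\alpha[v(t)]$ and let $\Phi(v)$ be the solution to \eqref{eq:heat_mu} on $[0,T^*]$ with $\lambda = \lambda_v$, produced by Lemma \ref{lem:fixed}. The estimates above show $\Phi(v) \in K$, so $\Phi$ is a self-map. To prove continuity of $\Phi$ in the $X$-topology, suppose $v_n \to v$ in $X$ with $v_n \in K$: by the interpolation inequality $\|w\|_{H^1} \lesssim \|w\|_{L^2}^{1/2}\|w\|_{H^2}^{1/2}$ together with the uniform $H^2$ bound built into $K$, one upgrades this to $v_n \to v$ in $C([0,T^*], H^1_0)$, whence $\lambda_{v_n} \to \lambda_v$ uniformly on $[0,T^*]$. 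Subtracting the integral equations for $u_n = \Phi(v_n)$ and $u = \Phi(v)$, a standard $L^2$ energy estimate — in which the power-type nonlinearity is controlled through the uniform $H^2$ bound and Sobolev's embedding, and the term $(\lambda_{v_n}-\lambda_v)u$ is absorbed via Grönwall — yields $u_n \to u$ in $X$. Schauder's theorem then provides a fixed point $u \in K$ which, by construction, solves \eqref{eqMuAlpha}; the regularity $u \in C([0,T^*], H^2 \cap H^1_0)$ follows from persistence of regularity for \eqref{eq:heat_mu} with bounded coefficient $\lambda_u = \mu_\alpha[u]$.

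The main obstacle is the continuity of $\Phi$ in the weak topology $X = C_t L^2$: because $\mu_\alpha$ is built from $\|\nabla v\|_{L^2}^2$ and $\|v\|_{L^{2\sigma+2}}^{2\sigma+2}$, neither of which is continuous in the $L^2$ topology, the uniform $H^2$ control encoded in $K$ is indispensable for upgrading $L^2$ convergence to $H^1$ convergence via interpolation, and only then can one pass to the limit in the nonlocal coefficient. This is precisely why Step 2 requires $u_0 \in H^2 \cap H^1_0$ rather than merely $H^1_0$; the general $H^1_0$ case is recovered in a separate density argument.
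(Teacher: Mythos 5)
Your proposal is correct and takes essentially the same route as the paper: freeze the nonlocal coefficient at $\mu_\alpha[v]$, solve the resulting semilinear problem with bounded time-dependent coefficient via Lemma \ref{lem:fixed} and Corollary \ref{prp:it_time}, and apply Schauder's theorem using the uniform $H^2$ bound \eqref{eqCont2} available because $u_0\in H^2(\Omega)\cap H^1_0(\Omega)$. The differences are only in packaging: you run Schauder in $C([0,T^*],L^2(\Omega))$ on a compact convex set built from the $H^2$ bound and Lipschitz-in-time estimate (via Arzel\`a--Ascoli and interpolation), and you verify the continuity of the map explicitly, whereas the paper works in $C([0,T],H^1_0(\Omega))$ on an $H^1$-ball whose image is precompact by Aubin--Lions; just note that the bound $\Lambda_0$ on $\mu_\alpha$ should be taken relative to the $H^2$ radius $M_2$ (or the $H^1$ bound $M_1$ should be added to the definition of $K$), a harmless bookkeeping adjustment.
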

\begin{proof}
Given any $u_0 \in H^2(\Omega) \cap H^1_0(\Omega) $, $u_0 \neq 0$, let $F_{u_0}: C([0,T_{u_0}], H^1_0(\Omega)) \to C([0,T_{u_0}], H^1_0(\Omega))$ be the map $F_{u_0}(u) = v$ where $v$ is a solution to the Cauchy problem 
\begin{equation*}
    \begin{cases}
        \partial_t v = \Delta v + g|v|^{2\sigma} v + \mu_{\alpha}[u] v, \\
        v(0) = u_0,
    \end{cases}
\end{equation*}
where $\| u_0 \|_{L^2}^2 = \alpha$, see Lemma \ref{lem:fixed}.
  Let 
\begin{equation*}
    B_{u_0} = \left\{ u \in C([0,T_{u_0}], H^1_0(\Omega))\, : \, \| u \|_{L^\infty([0,T_{u_0}],H^1)} \leq 2 \| u_0 \|_{H^1} \right\}
\end{equation*}
Let $F_{u_0}(B_{u_0}) =  I_{u_0}$.  Notice that, for any $u \in B_{u_0}$, $F_{u_0}(u) \in C([0,T_{u_0}], H^2(\Omega) \cap H^1_0(\Omega) )$. By \eqref{eqCont2}, there exists a time $\tilde T = T_{u_0} >0 $ such that $I_{u_0} \subset B_{u_0}$ and for any $u\in B_{u_0}$, 
\begin{equation*}
    \| F_{u_0}(u) \|_{L^\infty([0,\tilde T], H^2)} \leq 2 \| u_0\|_{H^2}.
\end{equation*} 

In order to apply the Schauder fixed point theorem, we shall prove that $I_{u_0}$ is precompact. We observe that for any  $u \in B_{u_0}$, $ v = F_{u_0}(u) \in W$ where
    \begin{equation*}
        W = \left\{ v \in C([0,\tilde{T}], H^2(\Omega) \cap H^1_0(\Omega) )\,:\, \partial_t v \in L^2([0,\tilde{T}], L^2(\Omega)) \right\}.
    \end{equation*} 
    From the Aubin-Lions lemma, the embedding of $W$ in $C([0,T], H^1_0(\Omega))$ is compact.
    Since $I_{u_0} \subset W$, we obtain that $I_{u_0} $ is indeed precompact. Applying the Schauder fixed point theorem, we find that $F_{u_0}$ admits a fixed point $u \in C([0,T], H^1_0(\Omega))$, a solution to 
    \begin{equation}\label{eqHeatMuA}
        \begin{cases}
        \partial_t u = \Delta u + g|u|^{2\sigma} u + \mu_{\alpha}[u]u, \\
        u(0) = u_0 \in H^2(\Omega) \cap H^1_0(\Omega) .
        \end{cases}
    \end{equation}
    By the standard persistence of regularity, we conclude that $u\in C([0,T], H^2(\Omega) \cap H^1_0(\Omega) )$.
    \end{proof}
    Next, we employ a density argument to prove that we can find solutions to \eqref{eqHeatMuA} with initial data in $H^1_0(\Omega) \setminus H^2(\Omega) \cap H^1_0(\Omega) $.
    \begin{theorem}\label{thmLocExisDom}
        Let $u_0 \in H^1_0(\Omega)$, $\alpha = \| u_0 \|_{L^2}^2 >0$. Then there exists a solution $u \in C([0,T],H^1_0(\Omega))$ to \eqref{eqMuAlpha}.
    \end{theorem}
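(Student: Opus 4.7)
The plan is to argue by density from the $H^2$-data result in Proposition \ref{prpSchauder}. Given $u_0\in H^1_0(\Omega)$ with $\alpha=\|u_0\|_{L^2}^2>0$, I would pick a sequence $u_0^n\in H^2(\Omega)\cap H^1_0(\Omega)$ converging to $u_0$ in $H^1_0(\Omega)$, set $\alpha_n=\|u_0^n\|_{L^2}^2$, and let $u_n\in C([0,T_n], H^2(\Omega)\cap H^1_0(\Omega))$ be the associated strong solution to \eqref{eqMuAlpha} with $\alpha$ replaced by $\alpha_n$. The goal is to extract a subsequential limit, defined on a common interval $[0,T^*]$, that solves \eqref{eqMuAlpha} with the original denominator $\alpha$.

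The first step is to produce a uniform time of existence. Thanks to the Gagliardo–Nirenberg bound \eqref{eq:mu_in} and the fact that $\alpha_n\to\alpha>0$, the quantity $\mu_{\alpha_n}[u_n]$ is controlled by $\|u_n\|_{H^1}$ uniformly in $n$. Feeding this into Corollary \ref{prp:it_time} via a continuity-in-time argument produces a common $T^*=T^*(\|u_0\|_{H^1},\alpha)>0$ and the uniform bound $\|u_n\|_{L^\infty([0,T^*],H^1)}\leq 2\|u_0^n\|_{H^1}$.

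The core step is a Cauchy estimate in $C([0,T^*],L^2(\Omega))$. Setting $w=u_n-u_m$, the $L^2$-energy identity gives
\begin{equation*}
\tfrac{1}{2}\tfrac{d}{dt}\|w\|_{L^2}^2 + \|\nabla w\|_{L^2}^2 = g\!\int_\Omega(|u_n|^{2\sigma}u_n-|u_m|^{2\sigma}u_m)\,w\,dx + \int_\Omega(\mu_{\alpha_n}[u_n]u_n-\mu_{\alpha_m}[u_m]u_m)\,w\,dx.
\end{equation*}
The power nonlinearity is handled by Hölder and the Gagliardo–Nirenberg inequality $\|w\|_{L^{2\sigma+2}}^2\leq \varepsilon\|\nabla w\|_{L^2}^2+C_\varepsilon\|w\|_{L^2}^2$, which holds precisely because $\sigma<2/(d-2)^+$. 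For the nonlocal term I would use the splitting $\mu_{\alpha_n}[u_n]u_n-\mu_{\alpha_m}[u_m]u_m=(\mu_{\alpha_n}[u_n]-\mu_{\alpha_m}[u_m])u_n+\mu_{\alpha_m}[u_m]w$ together with the pointwise bound $|\mu_{\alpha_n}[u_n]-\mu_{\alpha_m}[u_m]|\leq C(\|\nabla w\|_{L^2}+\|w\|_{L^{2\sigma+2}}+|\alpha_n-\alpha_m|)$, which follows from the uniform $H^1$-estimate. Absorbing the gradient contributions into the left-hand side and applying Gronwall's lemma yields
\begin{equation*}
\|u_n-u_m\|_{C([0,T^*],L^2)}^2 \leq C\bigl(\|u_0^n-u_0^m\|_{L^2}^2+|\alpha_n-\alpha_m|^2\bigr)\longrightarrow 0.
\end{equation*}

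Finally I would pass to the limit in the mild formulation satisfied by each $u_n$. The Cauchy property in $L^2$, combined with the uniform $H^1$-bound and interpolation with Sobolev embeddings, yields strong convergence of $u_n$ in $C([0,T^*],L^p(\Omega))$ for every $2\leq p<2d/(d-2)^+$, which settles the power nonlinearity. The main obstacle is identifying the limit of $\mu_{\alpha_n}[u_n]$, because $\|\nabla u_n(t)\|_{L^2}^2$ a priori only converges weakly. I expect to overcome this by upgrading the Cauchy estimate to an $H^1$-estimate for $w$: testing the equation for $w$ against $-\Delta w$, using the $H^2$-regularity of the $u_n$ from Proposition \ref{prpSchauder}, and estimating the right-hand side via Hölder, Gagliardo–Nirenberg and the uniform $H^1$-bound yields an analogous Gronwall inequality with $\|u_0^n-u_0^m\|_{H^1}^2+|\alpha_n-\alpha_m|^2$ on the right. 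This delivers strong convergence $u_n\to u$ in $C([0,T^*],H^1_0(\Omega))$, identifies $\mu_{\alpha_n}[u_n]\to\mu_\alpha[u]$, and shows that $u$ is a mild solution of \eqref{eqMuAlpha}, which by Proposition \ref{prop:upgrade} lies in the required class $C([0,T^*],H^1_0(\Omega))$.
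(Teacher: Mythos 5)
Your proposal follows the same skeleton as the paper's proof — approximate $u_0$ by data $u_0^{(n)}\in H^2(\Omega)\cap H^1_0(\Omega)$, solve via Proposition \ref{prpSchauder}, use Corollary \ref{prp:it_time} to get a common existence time and uniform $H^1$ bounds, then pass to the limit — but the limiting step is where you diverge, and where the problem lies. The paper passes to the limit by \emph{compactness}: it first proves a bound on $u^{(n)}$ in $L^2([0,T],H^2)$ that is uniform in $n$ (multiplying the equation by $\Delta u^{(n)}$ and using the space--time estimates of Proposition \ref{prp:sptime}), deduces a uniform bound on $\partial_t u^{(n)}$ in $L^2([0,T],L^2)$, and applies Aubin--Lions to get strong convergence in $L^2([0,T],H^1_0)$ and $C([0,T],L^2)$, which is enough to identify the limit of $\mu_\alpha[u^{(n)}]$ and pass to the limit in the equation; uniqueness is handled separately in Proposition \ref{prp:uni}. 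Your $C([0,T^*],L^2)$ Cauchy estimate is sound — it is essentially the computation of Proposition \ref{prp:uni}, and it works in the whole range $\sigma<\frac{2}{(d-2)^+}$ because both factors of $w=u_n-u_m$ share the Sobolev burden — but, as you note, it does not identify $\lim \mu_{\alpha_n}[u_n]$.

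The gap is the $H^1$ upgrade, which is exactly the step your argument cannot do without. Testing against $-\Delta w$ forces you to bound $\||u_n|^{2\sigma}u_n-|u_m|^{2\sigma}u_m\|_{L^2}$, and with only the uniform $H^1$ bounds this is controlled by $C(M)\|w\|_{H^1}$ only when $\sigma<\frac{1}{(d-2)^+}$ — this is precisely inequality \eqref{eq:lem2} and the reason Theorem \ref{thm:proof1} carries that restriction. In the remaining range $\frac{1}{(d-2)^+}\le\sigma<\frac{2}{(d-2)^+}$ (so for $d\ge3$) your Gronwall coefficient is not finite unless you inject higher regularity of $u_n$ that is \emph{uniform in $n$}. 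The $H^2$ regularity you invoke from Proposition \ref{prpSchauder} is not such a bound: it gives $\|u^{(n)}\|_{L^\infty([0,T],H^2)}\le 2\|u_0^{(n)}\|_{H^2}$, which blows up as $n\to\infty$ because $u_0\notin H^2$. What is needed is the uniform $L^2([0,T],H^2)$ estimate that the paper proves as a separate, nontrivial step, and even granting it, closing a genuine $H^1$-Cauchy/Gronwall inequality requires interpolation between the $L^\infty_tH^1$ and $L^2_tH^2$ bounds to place the coefficient in $L^1_t$, with additional care in higher dimensions; this is why the paper settles for Aubin--Lions compactness, for which strong $L^2_tH^1$ convergence of a subsequence suffices. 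As written, the key uniform higher-regularity estimate is missing, so the proposed $H^1$ Cauchy estimate — and with it the identification of the limit of $\mu_{\alpha_n}[u_n]$ — is not justified. Two minor points: normalizing $\|u_0^{(n)}\|_{L^2}^2=\alpha$, as the paper does, removes the $\alpha_n$ bookkeeping; and Proposition \ref{prop:upgrade} is stated for \eqref{eq:heat} rather than \eqref{eqMuAlpha}, though continuity in $H^1_0$ would already follow from the uniform convergence you are aiming for.
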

    
    \begin{proof}
    Let $u_0 \in H^1_0(\Omega) $ and let $\{u_0^{(n)}\}_{n \in \N} \subset H^2(\Omega) \cap H^1_0(\Omega) $ be such that 
    \begin{equation*}
      \lim_{n \to \infty}  \| u_0^{(n)} - u_0 \|_{H^1} = 0.
    \end{equation*}
    We can suppose that there $C >0$ such that
    \begin{equation}\label{eqSupInCon}
        \sup_{n \in \N} \| u_0^{(n)} \|_{H^1} \leq C, \quad \| u_0^{(n)} \|_{L^2} = \alpha.
    \end{equation}
    We denote by $\{u^{(n)}\}_{n \in N} \subset C([0,T_n],H^2(\Omega) \cap H^1_0(\Omega) )$ the solutions to \eqref{eqHeatMuA} emanating from $u_0^{(n)}$. Using Corollary \ref{prp:it_time} and \eqref{eqSupInCon}, we see that there exists $T >0$, such that
    $$\{u^{(n)}\}_{n \in N} \subset C([0,T],H^2(\Omega) \cap H^1_0(\Omega) )$$ 
    \begin{equation*}
        \sup_{n\in \N, t\in [0,T]} \| u^{(n)} \|_{H^1} \leq 2 C,
    \end{equation*}
    and 
    \begin{equation*}
        \sup_{n\in \N} \| \mu_\alpha[u^{(n)}] \|_{L^\infty[0,T]} \leq C_1,
    \end{equation*}
    where $C_1(C) > 0$. We observe that $\{u^{(n)}\}$ is uniformly bounded also in $L^2([0, T],H^2(\Omega))$. Indeed, we multiply equation \eqref{eqHeatMuA} by $\Delta u^{(n)}$ and integrate in space to get
	\begin{equation*}
	 \begin{aligned}
	 -\frac{1}{2} \frac{d}{dt} \| \nabla u^{(n)}\|_{L^2}^2 &= \| \Delta u^{(n)}\|_{L^2}^2 \\& - (2\sigma + 1)g \int_\Omega |u^{(n)}|^{2\sigma} |\nabla u^{(n)}|^2 dx - \mu_\alpha[u^{(n -1)}] \| \nabla u^{(n)}\|_{L^2}^2 dx.
	 \end{aligned}
	\end{equation*}
	Integrating this equation in time leads to
	\begin{equation*}
 \begin{aligned}
 \int_0^{T} \| \Delta u^{(n)}\|_{L^2}^2 \, ds &\lesssim 1 + \|\mu_\alpha [u^{(n -1)}] \|_{L^\infty([0,T])}T \| \nabla u^{(n)}\|_{L^{\infty}([0,T],L^2)}\\
 &+ \int_0^{T} \int_\Omega |u^{(n)}|^{2\sigma} |\nabla u^{(n)}|^2 \,dx \,ds.
 \end{aligned}
	\end{equation*}
	We suppose that $d \geq 3$. The case $d \leq 2$ is similar. By Holder's inequality, we see that
	\begin{equation*}
		\int_\Omega |u^{(n)}|^{2\sigma} |\nabla u^{(n)}|^2 dx \leq \| u^{(n)}\|_{L^\frac{2d}{d-2}}^{2\sigma} \| \nabla u^{(n)} \|_{L^r}^2,
	\end{equation*}
	where 
	\begin{equation*}
		r = \frac{2d}{d-\sigma(d-2)}.
	\end{equation*}
 Using H\"older's inequality in time yields
 \begin{equation*}
 \int_0^{T} \int_\Omega |u^{(n)}|^{2\sigma} |\nabla u^{(n)}|^2 \, dx \, ds \leq \|u^{(n)} \|_{L^\infty([0,T],H^1)}^{2\sigma} (T)^{1/p} \| \nabla u^{(n)}\|_{L^q([0,T],L^r)}^2,
	\end{equation*}
 where 
 \begin{equation*}
 q = \frac{4}{\sigma(d-2)}
 \end{equation*}
 is such that $(q,r)$ satisfies \eqref{eq:qrcon} and 
 \begin{equation*}
 0 < p = \frac{q}{q-2} < \infty.
 \end{equation*}
	Consequently, we obtain 
    $$\{u^{(n)}\}_{n\in \N} \subset L^{\infty}([0,T], H^1_0(\Omega)) \cap L^2([0,T], H^2(\Omega) \cap H^1_0(\Omega)).$$ From the embedding $H^2 \hookrightarrow L^{4\sigma + 2}$, we also have $\{ \partial_t u^{(n)}\}_{n\in \N}  \subset L^2([0,T], L^2(\Omega))$ and the two sequences are uniformly bounded in these spaces.
\newline	
So there exists a subsequence $\{u^{(n_k)}\}_{k \in \N}$, and $u \in L^{\infty}([0,T], H^1_0(\Omega)) \cap L^{2}([0,T], H^2(\Omega) \cap H^1(\Omega))$ with $\partial_t u \in L^{2}([0,T], L^2(\Omega))$, such that
	\begin{equation*}
		u^{(n_k)} \overset{\ast}{\rightharpoonup} u \ \mbox{ in } \ L^{\infty}([0,T], H^1_0(\Omega)) \quad \mbox{ and } \quad 	u^{(n_k)} \rightharpoonup u \ \mbox{ in } \ L^{2}([0,T], H^2(\Omega)). 
	\end{equation*}
	The Aubin-Lions lemma implies that $u^{(n_k)} \rightarrow u$ strongly in $L^2([0,T], H^1_0(\Omega))$ and in $C([0,T], L^2(\Omega))$. Therefore, $\mu_\alpha[u^{(n_k)}] \rightarrow \mu_\alpha[u]$ strongly in $L^2([0,T])$. Thus, $u$ satisfies weakly (and strongly) \eqref{eqHeatMuA}. 
 \end{proof}
    As a last step, we prove the equivalence of the Cauchy problem  \eqref{eqMuAlpha} with \eqref{eq:heat}. 
    \begin{prop}\label{prpEquivalence}
        The Cauchy problem \eqref{eqMuAlpha} is equivalent to  \eqref{eq:heat}.
    \end{prop}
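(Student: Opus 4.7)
The proof plan: the difference between \eqref{eqMuAlpha} and \eqref{eq:heat} lies solely in the denominator of the nonlocal coefficient, $\mu[u(t)]$ dividing by the instantaneous mass $\|u(t)\|^2_{L^2}$ whereas $\mu_\alpha[u(t)]$ divides by the fixed constant $\alpha = \|u_0\|^2_{L^2}$. The plan is therefore to show that any solution of \eqref{eqMuAlpha} automatically preserves its $L^2$-norm at the initial value $\alpha$, so that the two coefficients coincide along the flow and the two equations become identical.

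First, I would take a solution $u$ of \eqref{eqMuAlpha} with the regularity provided by Theorem \ref{thmLocExisDom}, namely $u \in L^\infty([0,T],H^1_0(\Omega)) \cap L^2([0,T],H^2(\Omega))$ with $\partial_t u \in L^2([0,T],L^2(\Omega))$. This regularity is enough to pair the equation with $u$ in the $L^2$ inner product and to guarantee that $m(t) := \|u(t)\|^2_{L^2}$ is absolutely continuous. Testing \eqref{eqMuAlpha} against $u$ and integrating by parts yields
\begin{equation*}
\tfrac{1}{2} m'(t) = -\|\nabla u(t)\|^2_{L^2} + g\|u(t)\|^{2\sigma+2}_{L^{2\sigma+2}} + \mu_\alpha[u(t)]\, m(t).
\end{equation*}
Introducing $N(t) := \|\nabla u(t)\|^2_{L^2} - g\|u(t)\|^{2\sigma+2}_{L^{2\sigma+2}}$, so that $\mu_\alpha[u(t)] = N(t)/\alpha$, this identity rearranges to the scalar linear Cauchy problem
\begin{equation*}
m'(t) = \frac{2 N(t)}{\alpha}\bigl(m(t) - \alpha\bigr), \qquad m(0) = \alpha.
\end{equation*}
Since $N \in L^\infty([0,T])$ by the $H^1$-continuity of $u$ and the Sobolev embedding $H^1 \hookrightarrow L^{2\sigma+2}$ valid in the energy-subcritical range, standard uniqueness for this linear ODE forces $m \equiv \alpha$ on $[0,T]$.

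Consequently $\|u(t)\|^2_{L^2} = \alpha$ throughout $[0,T]$, which gives $\mu_\alpha[u(t)] = \mu[u(t)]$ and shows that $u$ solves \eqref{eq:heat}. The converse implication is immediate: if $u$ solves \eqref{eq:heat}, then the very definition \eqref{eq:mu} of $\mu[u]$ is what makes the analogous computation of $\frac{d}{dt}\|u(t)\|^2_{L^2}$ vanish identically, so the $L^2$-norm stays equal to $\alpha$ and $\mu[u(t)] = \mu_\alpha[u(t)]$, making $u$ also a solution of \eqref{eqMuAlpha}. I do not expect any genuine obstacle in this argument; the only point to verify with a bit of care is that $t \mapsto \|u(t)\|^2_{L^2}$ is differentiable a.e.\ and its derivative equals $2\langle \partial_t u, u\rangle_{L^2}$, which is a direct consequence of the function spaces produced by Theorem \ref{thmLocExisDom}.
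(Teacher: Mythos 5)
Your argument is correct and follows essentially the same route as the paper: both test \eqref{eqMuAlpha} against $u$, obtain the linear ODE $m'(t)=\tfrac{2N(t)}{\alpha}\bigl(m(t)-\alpha\bigr)$ for the mass $m(t)=\|u(t)\|_{L^2}^2$ with $m(0)=\alpha$, and conclude $m\equiv\alpha$ (the paper writes the explicit exponential solution, you invoke ODE uniqueness — the same step). Your explicit treatment of the converse direction is a minor, welcome addition but not a different method.
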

    \begin{proof}
    Taking formally the scalar product of \eqref{eqMuAlpha} with $u$ and integrating in time, we see that solutions satisfy
    \begin{equation*}
        \| u(t) \|_{L^2}^2 = (\| u(0) \|_{L^2}^2 - \alpha) e^{\frac{2}{\alpha} \int_0^t (\|\nabla u(s)\|_{L^2}^2 - g\| u(s) \|_{L^{2\sigma + 2}}^{2\sigma + 2}) ds} + \alpha.
    \end{equation*}
    Since $\alpha = \| u(0)\|_{L^2}^2$, we obtain that the $L^2$-norm is constant in time.
     \end{proof}

Since Theorem \ref{thmLocExisDom} does not imply uniqueness of solutions,  we will show it in the following Proposition.

\begin{prop}\label{prp:uni}
			 For any $u_0\in H^1_0(\Omega)$ there exists a unique strong solution to \eqref{eq:heat}.
\end{prop}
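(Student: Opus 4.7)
For $u_0 \in H^2(\Omega) \cap H^1_0(\Omega)$, Proposition \ref{prpSchauder} already delivers a solution $u \in C([0,T], H^2(\Omega) \cap H^1_0(\Omega))$, which trivially satisfies the strong-solution definition. For general $u_0 \in H^1_0(\Omega)$, the plan is to revisit the approximation scheme in the proof of Theorem \ref{thmLocExisDom}: the constructed sequence $\{u^{(n)}\}$ is uniformly bounded in $L^\infty([0,T],H^1_0(\Omega)) \cap L^2([0,T],H^2(\Omega)\cap H^1_0(\Omega))$ with $\{\partial_t u^{(n)}\}$ uniformly bounded in $L^2([0,T],L^2(\Omega))$. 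By weak-$*$ lower semicontinuity, the limit $u$ (which coincides, via Proposition \ref{prpEquivalence} with $\alpha=\|u_0\|_{L^2}^2$, with the $C([0,T],H^1_0(\Omega))$ solution of \eqref{eq:heat} produced there) inherits all three memberships, and the standard embedding $L^2_t H^2 \cap H^1_t L^2 \hookrightarrow C_t H^1_0$ upgrades the weak time-continuity in $H^1_0$ to the strong one. Hence $u$ is a strong solution.

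\textbf{Uniqueness setup.} Let $u_1,u_2$ be two strong solutions with $u_1(0)=u_2(0)=u_0$. Since each $u_i$ belongs to $H^1_t L^2$, the $L^2$-preservation computation already used in Proposition \ref{prpEquivalence} applies rigorously to both indices, giving $\|u_i(t)\|_{L^2}^2 = \|u_0\|_{L^2}^2$; in particular the denominators in $\mu[u_1]$ and $\mu[u_2]$ coincide. Setting $w=u_1-u_2$ and subtracting the two equations yields
\begin{equation*}
\partial_t w = \Delta w + g\bigl(|u_1|^{2\sigma}u_1 - |u_2|^{2\sigma}u_2\bigr) + \mu[u_1] w + \bigl(\mu[u_1]-\mu[u_2]\bigr)u_2,
\end{equation*}
and pairing in $L^2$ with $w$ and integrating by parts gives
\begin{equation*}
\tfrac{1}{2}\tfrac{d}{dt}\|w\|_{L^2}^2 + \|\nabla w\|_{L^2}^2 = g\!\int (|u_1|^{2\sigma}u_1 - |u_2|^{2\sigma}u_2)\,w\,dx + \mu[u_1]\|w\|_{L^2}^2 + (\mu[u_1]-\mu[u_2])\!\int u_2 w\,dx.
\end{equation*}

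\textbf{Estimates.} The power term is controlled by the pointwise bound $||u_1|^{2\sigma}u_1-|u_2|^{2\sigma}u_2| \lesssim (|u_1|^{2\sigma}+|u_2|^{2\sigma})|w|$, H\"older, and a Gagliardo-Nirenberg interpolation valid in the subcritical range $\sigma < 2/(d-2)^+$, yielding an expression of the form $\varepsilon\|\nabla w\|_{L^2}^2 + C(\|u_i\|_{H^1})\|w\|_{L^2}^2$. The coefficient $\mu[u_1]$ is bounded in $L^\infty_t$ thanks to \eqref{eq:mu_in} and the uniform $H^1$-bound on $u_1$. For the cross term, the shared $L^2$-denominator gives
\begin{equation*}
|\mu[u_1]-\mu[u_2]| \leq \tfrac{1}{\|u_0\|_{L^2}^2}\Bigl(\bigl|\|\nabla u_1\|_{L^2}^2 - \|\nabla u_2\|_{L^2}^2\bigr| + |g|\,\bigl|\|u_1\|_{L^{2\sigma+2}}^{2\sigma+2}-\|u_2\|_{L^{2\sigma+2}}^{2\sigma+2}\bigr|\Bigr),
\end{equation*}
where the first difference equals $\int\nabla(u_1+u_2)\!\cdot\!\nabla w\,dx \leq (\|\nabla u_1\|_{L^2}+\|\nabla u_2\|_{L^2})\|\nabla w\|_{L^2}$, and the second is estimated by $C(\|u_1\|_{L^{2\sigma+2}}^{2\sigma+1}+\|u_2\|_{L^{2\sigma+2}}^{2\sigma+1})\|w\|_{L^{2\sigma+2}}$. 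Combined with $|\int u_2 w\,dx| \leq \|u_0\|_{L^2}\|w\|_{L^2}$, Young's inequality converts every $\|\nabla w\|_{L^2}$ factor into $\varepsilon\|\nabla w\|_{L^2}^2$ plus a coefficient times $\|w\|_{L^2}^2$, while a further Gagliardo-Nirenberg bound handles the $\|w\|_{L^{2\sigma+2}}$ factor.

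\textbf{Conclusion and main obstacle.} Choosing $\varepsilon$ small enough to absorb all gradient contributions into the dissipative term produces $\frac{d}{dt}\|w\|_{L^2}^2 \leq C(t)\|w\|_{L^2}^2$ with $C\in L^\infty_{\mathrm{loc}}$, thanks to the $L^\infty_t H^1$ and $L^\infty_t L^{2\sigma+2}$ bounds on $u_1,u_2$. Since $w(0)=0$, Gronwall's lemma forces $w\equiv 0$. The principal difficulty is precisely the nonlocal cross term $(\mu[u_1]-\mu[u_2])\int u_2 w\,dx$: because $\mu$ depends quadratically on $\nabla u$, the naive estimate costs a full derivative of $w$, and the closure rests on two simultaneous cancellations, namely the $L^2$-preservation (which removes the would-be denominator singularity and reduces the $(u_2,w)$ pairing to the $L^2$-scale of $w$), and Young's inequality, which trades the surplus gradient for contributions absorbable by the dissipation $\|\nabla w\|_{L^2}^2$.
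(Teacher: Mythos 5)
Your proposal is correct and follows essentially the same route as the paper: existence is inherited from the Schauder construction, the density argument, and the equivalence of the $\mu_\alpha$-problem, while uniqueness is proved by the identical $L^2$-energy/Gronwall argument, with the same splitting of the nonlocal difference into a $\mu\|w\|_{L^2}^2$ term and a $(\mu[u_1]-\mu[u_2])$ cross term, the same Gagliardo--Nirenberg/Young absorption of the surplus gradient into the dissipation, and the same use of $L^2$-conservation to control the denominators. No gaps.
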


\begin{proof}
Let $u_0 \in H^1_0(\Omega)$ so that $\|u_0\|_{H^1} =M$ and suppose that $$u,v \in C([0,T],H^1_0(\Omega))$$ are two different strong solutions to \eqref{eq:heat}. Here we choose $T > 0$ so that if $T_u, T_v > 0$ are the maximal times of existence of $u,v$ then $T < \min (T_u,T_v)$. In particular, there exists a constant $C = C(M) >0$ such that 
			\begin{equation}
				\sup_{t\in[0,T]} \|u\|_{H^1} \leq C, \quad \sup_{t\in[0,T]}\|v\|_{H^1} \leq C,
			\end{equation}
			and
			\begin{equation}
				\sup_{t\in[0,T]}\mu[u] \leq C, \quad \sup_{t\in[0,T]} \mu[v] \leq C.
			\end{equation}
By multiplying the equation 
\begin{equation*}
 \partial_t (u -v) = \Delta(u-v) + g |u|^{2\sigma} u - g |v|^{2\sigma} v+ \mu[u] u - \mu[v]v
\end{equation*}
by $(u-v)$ and integrating in space, we obtain
\begin{equation}\label{eq:took1}
\begin{split}
 \frac{1}{2} \frac{d}{dt} \| u -v \|_{L^2}^2 &= -\| \nabla (u - v)\|_{L^2}^2 + \int_\Omega (\mu[u] u - \mu[v]v)(u-v) dx \\
 &+ g \int_\Omega (|u|^{2\sigma} u - |v|^{2\sigma}v)(u-v) dx.
 \end{split}
\end{equation}
We start by estimating the contribution given by the power-type nonlinearities. We suppose that $d \geq 2$. Notice that
\begin{equation*}
 \begin{split}
 \left| \int_\Omega (|u|^{2\sigma} u - |v|^{2\sigma }v)(u - v) \, dx \right| & \lesssim \int_\Omega (|u|^{2\sigma} + |v|^{2\sigma}) | u - v|^2 \, dx \\
 & \lesssim \left(\| u\|_{L^{2\sigma p '}}^{2\sigma} + \| v\|_{L^{2\sigma p '}}^{2\sigma} \right) \| u - v\|_{L^{2p}}^2.
 \end{split}
 \end{equation*}
 We choose the exponent $p$ so that
 \begin{equation*}
 p' = \frac{d}{\sigma(d-2)}, \quad p = \frac{d}{2\sigma + d(1 - \sigma)},
 \end{equation*}
and consequently
 \begin{equation*}
 2\sigma p' = \frac{2d}{d-2}.
 \end{equation*}
By Gagliardo-Nirenberg and Young's inequalities, for any $\delta >0$, we obtain
\begin{equation*}
 \| u - v\|_{L^{2p}}^2 \lesssim \| u - v\|_{L^2}^{2 - \sigma(d-2)} \|\nabla( u - v)\|_{L^2}^{\sigma(d-2)} \lesssim \frac{1}{\delta} \| u - v\|_{L^2}^2 + \delta \|\nabla( u - v)\|_{L^2}^2. 
\end{equation*}
Thus, it follows that for any $\delta > 0$, we have
\begin{equation}\label{eq:tok10}
 \left| \int_\Omega (|u|^{2\sigma} u - |v|^{2\sigma }v)(u - v) dx \right| \lesssim \frac{1}{\delta} \| u - v\|_{L^2}^2 + \delta \|\nabla( u - v)\|_{L^2}^2.
\end{equation}
Next, we deal with the nonlocal term. We observe that 
\begin{equation*}
\begin{split}
 &\int_\Omega (\mu[u] u - \mu[v]v)(u-v) dx = (\mu[u] - \mu[v])\int_\Omega u (u-v)dx + \mu[v] \|u - v\|_{L^2}^2 \\
 & \leq \left( \left| \|\nabla u \|_{L^2}^2 - \| \nabla v\|_{L^2}^2 \right| + \left| \| u \|_{L^{2\sigma+ 2}}^{2\sigma +2} - \| v \|_{L^{2\sigma+ 2}}^{2\sigma +2} \right| \right) \| u - v\|_{L^2} + C(M) \|u - v\|_{L^2}^2.
\end{split}
\end{equation*}
By Young's inequality, for any $\delta > 0$ we have
\begin{equation*}
 \left| \|\nabla u \|_{L^2}^2 - \| \nabla v\|_{L^2}^2 \right| \| u - v\|_{L^2} \leq C(M) \left( \frac{1}{\delta} \| u - v\|_{L^2}^2 + \delta \|\nabla( u - v)\|_{L^2}^2 \right).
\end{equation*}
From \eqref{eq:tok10} and the triangular inequality, we also obtain that, for any $\delta >0$, 
\begin{equation}\label{eq:took2}
 \left|\| u \|_{L^{4\sigma + 2}}^{2\sigma + 1} - \| v \|_{L^{4\sigma + 2}}^{2\sigma + 1}\right| \| u-v\|_{L^2} \lesssim \frac{1}{\delta} \| u - v\|_{L^2}^2 + \delta \|\nabla( u - v)\|_{L^2}^2. 
\end{equation}
In particular, from \eqref{eq:took1}, \eqref{eq:took2} and \eqref{eq:tok10} we have
\begin{equation*}
\begin{split}
 \frac{d}{dt} \| u - v\|_{L^2}^2 \leq (C(M)\delta - 1) \| \nabla (u - v) \|_{L^2}^2 + \frac{C(M)}{\delta} \| u - v\|_{L^2}^2.
\end{split}
\end{equation*}
For $\delta < \frac{1}{2C(M)}$, we obtain by Gronwall's lemma that
\begin{equation*}
 \| u(t) - v(t)\|_{L^2}^2 \leq \| u(0) - v(0)\|_{L^2}^2 e^{C t}.
\end{equation*}
Since $u(0) = v(0)$ it follows that $u =v$ for all $t \in [0,T]$.
	\end{proof} 
\begin{proof}[Proof of Theorem \ref{thm:proof2}:]
Theorem \ref{thmLocExisDom} implies that for any initial condition $u_0 \in H^1_0(\Omega)$, there exists a local strong solution to \eqref{eq:heat}. Moreover, it is possible to extend the time of existence until the $H^1$-norm of the solution is bounded, giving the blow-up alternative \eqref{eq:bu1}. On the other hand, Proposition \ref{prp:uni} implies that for any initial condition $u_0 \in H^1_0(\Omega)$, there exists at most one solution.
\end{proof}

\begin{remark}
Let us note that the approach developed in Theorem \ref{thmLocExisDom} cannot be used in the case $\Omega=\R^d$. Indeed, at several points, the use of the Aubin-Lions lemma required the compact embedding $H^1(\Omega) \hookrightarrow L^2(\Omega)$.
\end{remark}

\subsection{The case $0<\sigma < \frac{1}{(d-2)^+}$}

\begin{theorem}\label{thm:proof1}
		Let $0< \sigma < \frac{1}{(d-2)^+}$. Let either $\Omega$ be a regular bounded domain or $\Omega = \R^d$. For any $u_0 \in H^1_0(\Omega)$, there exists a maximal time of existence $T_{max}>0$ and a unique solution $u \in C([0,T_{max}), H^1_0(\Omega))$ to equation \eqref{eq:heat}. Moreover, either $T_{max} = \infty$, or $T_{max} < \infty$ and we have
		\begin{equation}\label{eq:bu}
			\lim_{t \rightarrow T_{max}} \| \nabla u(t)\|_{L^2} = \infty.
		\end{equation}
\end{theorem}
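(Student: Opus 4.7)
The plan is to construct a mild solution via the Banach fixed point theorem applied to the integral formulation
\[
\mathcal{G}(u)(t) = e^{t\Delta} u_0 + \int_0^t e^{(t-s)\Delta}\bigl(g|u(s)|^{2\sigma}u(s) + \mu[u(s)]u(s)\bigr)\,ds,
\]
working in a ball of $C([0,T], H^1_0(\Omega))$ for $T$ small. The case $u_0 = 0$ is trivial, as $u \equiv 0$ is the unique solution. For $u_0\neq 0$ we follow the device used in Proposition \ref{prpEquivalence}: set $\alpha = \|u_0\|_{L^2}^2 > 0$ and replace $\mu[u]$ by the modified functional $\mu_\alpha[u]$, which has a harmless constant denominator. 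After producing a fixed point $u$ of the modified map, a direct computation of $\frac{d}{dt}\|u(t)\|_{L^2}^2 = 2\mu_\alpha[u(t)]\bigl(\|u(t)\|_{L^2}^2 - \alpha\bigr)$ and Gronwall's lemma force $\|u(t)\|_{L^2}^2 \equiv \alpha$, so that $\mu_\alpha[u] = \mu[u]$ and $u$ solves \eqref{eq:heat}.

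The crucial feature that makes the fixed point work in low regularity is exactly the hypothesis $\sigma < \frac{1}{(d-2)^+}$, which yields the Sobolev embedding $H^1_0(\Omega) \hookrightarrow L^{4\sigma+2}(\Omega)$ and hence the pointwise estimate
\[
\bigl\||u|^{2\sigma}u\bigr\|_{L^2} \leq \|u\|_{L^{4\sigma+2}}^{2\sigma+1} \lesssim \|u\|_{H^1}^{2\sigma+1}.
\]
Together with the analogous control of $\|u\|_{L^{2\sigma+2}}$, this makes $|\mu_\alpha[u]|$ bounded by a polynomial in $\|u\|_{H^1}$ divided by $\alpha$, so on $\mathcal{Y}_{M,T} = \{u\in C([0,T],H^1_0) : \|u\|_{L^\infty_tH^1_x}\leq M\}$ the forcing $g|u|^{2\sigma}u + \mu_\alpha[u]u$ lies in $L^\infty([0,T],L^2(\Omega))$ with norm bounded by some $C(M,\alpha)$. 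Applying Proposition \ref{prop:smooth_par} to the Duhamel integral then gives $\|\nabla \int_0^t e^{(t-s)\Delta}F\,ds\|_{L^\infty_tL^2_x} \leq C\sqrt{T}\,\|F\|_{L^\infty_tL^2_x}$, and the matching $L^\infty_tL^2_x$ bound on the integral itself comes from $L^2$ contractivity of the semigroup. Choosing $M = 2\|u_0\|_{H^1}$ and $T$ small enough, $\mathcal{G}$ maps $\mathcal{Y}_{M,T}$ into itself.

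For contraction, the difference estimate
\[
\bigl\||u|^{2\sigma}u - |v|^{2\sigma}v\bigr\|_{L^2} \leq C\bigl(\|u\|_{H^1}+\|v\|_{H^1}\bigr)^{2\sigma}\|u-v\|_{H^1}
\]
follows from $\bigl||u|^{2\sigma}u - |v|^{2\sigma}v\bigr| \lesssim (|u|^{2\sigma}+|v|^{2\sigma})|u-v|$, Hölder with exponents $\tfrac{2\sigma+1}{\sigma}$ and $4\sigma+2$, and Sobolev. The splitting $\mu_\alpha[u]u - \mu_\alpha[v]v = (\mu_\alpha[u]-\mu_\alpha[v])u + \mu_\alpha[v](u-v)$, combined with the elementary Lipschitz bounds on the bilinear forms $\|\nabla\cdot\|_{L^2}^2$ and $\|\cdot\|_{L^{2\sigma+2}}^{2\sigma+2}$ on $H^1$-balls (the latter using the same Hölder--Sobolev recipe), yields the matching bound for the nonlocal term. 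Shrinking $T$ further produces a strict contraction, hence a unique fixed point. Uniqueness in the larger class $C([0,T],H^1_0)$ follows along the lines of Proposition \ref{prp:uni}, whose energy-type computation transfers to $\Omega = \R^d$ because $H^1$ integrability suppresses all boundary contributions. The blow-up alternative \eqref{eq:bu} is then obtained by a standard continuation argument: if $T_{max} < \infty$ while $\|\nabla u(t)\|_{L^2}$ stays bounded on approach, the $H^1$ a priori bound would allow us to re-start the fixed point beyond $T_{max}$, a contradiction.

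The main obstacle is the tension between the nonlocal and the power-type parts of the equation. The nonlocal coefficient $\mu[u]$ forces the fixed point to live in an $H^1_x$-valued space, while $|u|^{2\sigma}u$ is, for $0<\sigma<\tfrac{1}{2}$, not locally Lipschitz from $H^1$ into itself (see Remark \ref{rem:lip}). The smoothing estimate of Proposition \ref{prop:smooth_par} is precisely what bridges this gap: it converts an $L^\infty_tL^2_x$ bound on the forcing into a $C_tH^1_x$ bound on the Duhamel integral, at the price of restricting the nonlinearity so that $H^1 \hookrightarrow L^{4\sigma+2}$, which is exactly the content of $\sigma < \tfrac{1}{(d-2)^+}$.
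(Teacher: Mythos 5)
Your proof is correct, and its analytic core coincides with the paper's: a contraction for the Duhamel map in a ball of $C([0,T],H^1_0)$, with the forcing controlled in $L^\infty_t L^2_x$ through the embedding $H^1\hookrightarrow L^{4\sigma+2}$ (this is exactly where $\sigma<\frac{1}{(d-2)^+}$ enters, as in the paper's estimates \eqref{eq:lem1}--\eqref{eq:lem2}, your H\"older splitting being an equivalent variant) and the Duhamel term upgraded to $C_tH^1_x$ via Proposition \ref{prop:smooth_par}. The one genuine difference is how you tame the denominator of $\mu$. The paper keeps $\mu[u]$ itself and builds the constraint $\inf_{[0,T]}\|u\|_{L^2}\geq N/2$ into the fixed-point space, which must then be propagated by the map; this is done via the linear mass-dissipation identity \eqref{eq:linmass}, yielding the lower bound \eqref{eq:strHeat13}. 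You instead freeze the denominator, solving first the $\mu_\alpha$-problem with $\alpha=\|u_0\|_{L^2}^2$, and recover \eqref{eq:heat} a posteriori from $\frac{d}{dt}\|u\|_{L^2}^2=2\mu_\alpha[u]\left(\|u\|_{L^2}^2-\alpha\right)$ and Gronwall, i.e.\ the paper's Proposition \ref{prpEquivalence}, which the paper only uses in its Schauder-based proof on bounded domains. Your route avoids the lower-bound bookkeeping and the use of \eqref{eq:linmass}, at the modest price of the extra equivalence step, whose mass computation should be justified for mild solutions via the regularity upgrade of Proposition \ref{prop:upgrade}; note also that your uniqueness, argued in the full class $C([0,T],H^1_0)$ through the energy method of Proposition \ref{prp:uni} (which indeed carries over to $\R^d$), is slightly stronger than the uniqueness-within-the-ball that the contraction alone provides and that the paper's proof invokes.
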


We need the following lemma, where the restriction on $\sigma$ is justified by direct application of Sobolev's embedding in \eqref{eq:lem2} below.

\begin{lem}
			Let $u,v \in H^1_0(\Omega)$ with $ \| u\|_{H^1}, \| v\|_{H^1} \leq M.$ Let $\sigma < \frac{1}{(d - 2)^+}$
			Then there exists a constant $C = C(M) >0$ such that
			\begin{equation}\label{eq:lem1}
				\left| \|\nabla u \|_{L^2}^2 - \| \nabla v\|_{L^2}^2 \right| + \left| \| u \|_{L^{2\sigma+ 2}}^{2\sigma +2} - \| v \|_{L^{2\sigma+ 2}}^{2\sigma +2} \right| \leq C \| u-v\|_{H^1},
			\end{equation}
	\begin{equation}\label{eq:lem2}
	\left\| |u|^{2\sigma} u - |v|^{2\sigma} v \right\|_{L^2} \leq C \| u-v\|_{H^1}.
\end{equation}
\end{lem}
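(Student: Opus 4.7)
The two inequalities are purely pointwise/Sobolev estimates, so the plan is to reduce both to an elementary algebraic inequality plus Hölder and the Sobolev embedding $H^1_0(\Omega) \hookrightarrow L^r(\Omega)$. Throughout, the bound $\|u\|_{H^1},\|v\|_{H^1}\leq M$ is used to convert products of Sobolev norms into the constant $C(M)$.

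For \eqref{eq:lem1}, I would first handle the gradient term by the identity $a^2-b^2=(a-b)(a+b)$, which via Cauchy-Schwarz gives
\begin{equation*}
\bigl|\|\nabla u\|_{L^2}^2-\|\nabla v\|_{L^2}^2\bigr|\leq \|\nabla(u-v)\|_{L^2}\,\|\nabla(u+v)\|_{L^2}\leq 2M\,\|u-v\|_{H^1}.
\end{equation*}
For the $L^{2\sigma+2}$ term, I would use the pointwise bound $||a|^{2\sigma+2}-|b|^{2\sigma+2}|\lesssim (|a|^{2\sigma+1}+|b|^{2\sigma+1})|a-b|$ and then apply Hölder's inequality with the conjugate exponents $\frac{2\sigma+2}{2\sigma+1}$ and $2\sigma+2$, obtaining a bound of the form
\begin{equation*}
\bigl(\|u\|_{L^{2\sigma+2}}^{2\sigma+1}+\|v\|_{L^{2\sigma+2}}^{2\sigma+1}\bigr)\|u-v\|_{L^{2\sigma+2}}.
\end{equation*}
Since $\sigma<\tfrac{1}{(d-2)^+}<\tfrac{2}{(d-2)^+}$, Sobolev embedding gives $H^1_0\hookrightarrow L^{2\sigma+2}$, and the bound $C(M)\,\|u-v\|_{H^1}$ follows.

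For \eqref{eq:lem2}, the pointwise inequality $||a|^{2\sigma}a-|b|^{2\sigma}b|\lesssim (|a|^{2\sigma}+|b|^{2\sigma})|a-b|$ reduces the task to estimating $\|(|u|^{2\sigma}+|v|^{2\sigma})(u-v)\|_{L^2}$. Here I would apply Hölder with exponents $p$ and $q$ satisfying $\tfrac1p+\tfrac1q=\tfrac12$, producing
\begin{equation*}
\bigl(\|u\|_{L^{2\sigma p}}^{2\sigma}+\|v\|_{L^{2\sigma p}}^{2\sigma}\bigr)\|u-v\|_{L^{2q}}.
\end{equation*}
For $d\geq 3$, the optimal choice is $2q=\tfrac{2d}{d-2}$ (the Sobolev exponent for $u-v$), which forces $p=\tfrac{d}{2}$, hence $2\sigma p=\sigma d$; the requirement $\sigma d\leq\tfrac{2d}{d-2}$ is equivalent to $\sigma\leq\tfrac{1}{d-2}$, which is exactly the standing hypothesis. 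For $d\leq 2$, $H^1_0$ embeds into every $L^r$ with $r<\infty$, so any convenient choice of $(p,q)$ works with no constraint on $\sigma$. Applying Sobolev embedding to both factors and absorbing the powers into $C(M)$ completes the estimate.

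The only subtle point is the sharp matching of exponents in \eqref{eq:lem2}: the condition $\sigma<\tfrac{1}{(d-2)^+}$ is not used for smoothness or local Lipschitz behavior per se, but enters precisely so that both $|u|^{2\sigma}$ (as an $L^p$ function with $p=d/2$) and the difference $u-v$ (as an $L^{2q}$ function with $2q=2d/(d-2)$) fit into the Hölder pairing dictated by the $L^2$ target norm. This is the step that would break down for $\sigma\geq\tfrac{1}{(d-2)^+}$ and motivates the separate treatment of the regime $\sigma\geq\tfrac{1}{2}$ in the third proof.
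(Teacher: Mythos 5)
Your treatment of \eqref{eq:lem1} is correct, and your Hölder split with conjugate exponents $\tfrac{2\sigma+2}{2\sigma+1}$ and $2\sigma+2$ is even a bit more economical than the paper's (which places $u-v$ in $L^2$ and $u,v$ in $L^{4\sigma+2}$, and therefore already uses $\sigma<\tfrac{1}{(d-2)^+}$ at that stage). For \eqref{eq:lem2} your strategy (pointwise inequality, Hölder, Sobolev) is the same as the paper's, but the exponent bookkeeping is internally inconsistent: writing $\||u|^{2\sigma}(u-v)\|_{L^2}\le \||u|^{2\sigma}\|_{L^a}\|u-v\|_{L^b}$ with $\tfrac1a+\tfrac1b=\tfrac12$ and $b=\tfrac{2d}{d-2}$ forces $a=d$, so the companion factor is $\|u\|_{L^{2\sigma d}}^{2\sigma}$ and the Sobolev requirement is $2\sigma d\le\tfrac{2d}{d-2}$, i.e.\ $\sigma\le\tfrac1{d-2}$. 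Your chain ``$p=d/2$, $2\sigma p=\sigma d$, and $\sigma d\le\tfrac{2d}{d-2}\iff\sigma\le\tfrac1{d-2}$'' contains compensating factor-of-two slips (note $\sigma d\le\tfrac{2d}{d-2}$ is equivalent to $\sigma\le\tfrac{2}{d-2}$); the final threshold comes out right only by accident.

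The more serious point is a genuine gap: by fixing $u-v$ at the critical exponent you are forced to measure $u,v$ in $L^{2\sigma d}$, and when $\Omega=\R^d$ and $\sigma<\tfrac1d$ --- a range covered by the lemma and needed in Theorem \ref{thm:proof1} for $d\ge3$ --- one has $2\sigma d<2$, and $\|u\|_{L^{2\sigma d}}$ is \emph{not} controlled by $\|u\|_{H^1}\le M$ on the whole space: the family $u_\lambda(x)=\lambda^{d/2}\phi(\lambda x)$ stays bounded in $H^1(\R^d)$ as $\lambda\to0$ while $\|u_\lambda\|_{L^p}=\lambda^{\frac d2-\frac dp}\|\phi\|_{L^p}\to\infty$ for every $p<2$. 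So the constant $C(M)$ you invoke does not exist in that regime, and your remark that the threshold $\sigma<\tfrac{1}{(d-2)^+}$ is ``exactly'' what your pairing needs is not accurate on $\R^d$. The paper makes the mirror choice: it assigns the critical exponent to the power factor (its exponents satisfy $4\sigma p=\tfrac{2d}{d-2}$, so that factor is $\|u+v\|_{L^{2d/(d-2)}}^{2\sigma}$, harmless for every $\sigma>0$) and lets $u-v$ sit in $L^{2p'}$ with $2<2p'<\tfrac{2d}{d-2}$, which is precisely where the hypothesis $\sigma<\tfrac1{(d-2)^+}$ enters; this works on bounded domains and on $\R^d$ alike. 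Your argument is complete as written only on bounded domains (where $\|u\|_{L^{2\sigma d}}\lesssim\|u\|_{L^2}$ by finite measure when $2\sigma d\le2$) or when $\sigma\ge\tfrac1d$; to repair the general case, swap which factor receives the critical exponent.
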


\begin{proof}
	The first term on the left-hand side of \eqref{eq:lem1} is bounded by
	 \begin{equation*}
	 \int_\Omega |\nabla u|^2 - |\nabla v|^2 dx \leq \left(\|\nabla u \|_{L^2} + \|\nabla v \|_{L^2}\right) \| \nabla(u-v)\|_{L^2}.
	 \end{equation*}
	 To obtain the bound on the second term on the left-hand-side of \eqref{eq:lem1}, we observe that
	 \begin{equation*}
	 \begin{split}
	 \int_\Omega \left||u|^{2\sigma +2} - |v|^{2\sigma +2} \right| dx &\lesssim \int_\Omega |u - v| (|u|^{2\sigma + 1} + |v|^{2\sigma + 1}) dx \\ & \lesssim \left(\| u \|_{L^{4\sigma + 2}}^{2\sigma + 1} + \| v \|_{L^{4\sigma + 2}}^{2\sigma + 1}\right) \| u-v\|_{L^2}. 
	 \end{split}
	 \end{equation*}
	 The condition $0<\sigma < \frac{1}{(d-2)^+}$ implies that $2 < 4\sigma + 2 < \frac{2d}{(d-2)^+}$. Thus Sobolev's embedding theorem implies that there exists a constant $C(M)>0$ such that 
	 \begin{equation*}
	 \int_\Omega \left||u|^{2\sigma +2} - |v|^{2\sigma +2} \right| dx \leq C(M) \| u-v\|_{L^2}. 
	 \end{equation*}
	 Next, we obtain the bound in \eqref{eq:lem2}. Suppose that $d >2$. Then, by H\"older's inequality, we get
	 \begin{equation*}
	 \int_\Omega \left||u|^{2\sigma} u - |v|^{2\sigma} v\right|^2 \, dx\lesssim \left(\int_\Omega |u - v|^{2p'} \,dx \right)^\frac{1}{p'} \left(\int_\Omega |u + v|^{4\sigma p} \,dx \right)^\frac{1}{p},
	 \end{equation*}
	 Let us choose
	 \begin{equation*}
	 p = \frac{d}{2\sigma(d-2)}, \quad p' = \frac{d}{d - 2\sigma(d-2)}.
	 \end{equation*}
	 From $\sigma < \frac{1}{d-2}$ we have 
	 \begin{equation*}
	 4\sigma p = \frac{2d}{d-2}, \quad 2p' \leq \frac{2d}{d-2}.
	 \end{equation*}
	 Inequality \eqref{eq:lem2} follows from Sobolev's embedding theorem. Finally, observe that the case $d \leq 2$ follows from a similar and easier proof. 
\end{proof}

\begin{proof}[Proof of Theorem \ref{thm:proof1}:]
	Let $T, M, N >0$ be some constants that will be chosen later and consider the set
	\begin{equation*}
		\mathcal{X} = \{ u \in L^\infty([0,T], H^1_0(\Omega)),\, \| u\|_{L^\infty([0,T],H^1)} \leq 2M, \, \inf_{[0,T]} \| u\|_{L^2} \geq \frac{N}{2} \},
	\end{equation*}
	equipped with the natural distance 
	\begin{equation*}
		d(u,v) = \| u - v\|_{L^\infty([0,T],H^1)}. 
	\end{equation*}
	Clearly, $(\mathcal{X}, d)$ is a complete metric space. Given any $u_0 \in H^1_0(\Omega)$, $u_0 \centernot{\equiv} 0$, $u,v\in \mathcal X$, we also define the map $\mathcal{F}(u_0)(u)(t) = \mathcal{F}(u)(t)$ as
	\begin{equation}\label{eq:int}
		\mathcal{F}(u)(t) = e^{t\Delta} u_0 + \int_0^t e^{(t-s)\Delta} (g |u|^{2\sigma} u + \mu[u] u)ds.
	\end{equation}
 Our goal is to prove that $\mathcal F$ is a contraction in the space $\mathcal{X}$. 
 First of all, we observe that, by Gagliardo-Nirenberg's inequality, for any $u\in\mathcal X$ there exists a constant $C(M, N) > 0$ such that
		\begin{equation}\label{eq:omegaL}
			\| \mu[u] \|_{L^\infty[0,T]} \lesssim \frac{1}{N^2} \left(\| u\|_{L^\infty([0,T],H^1)}^2 + \| u\|_{L^\infty([0,T],H^1)}^{2\sigma + 2} \right) \leq C.
		\end{equation} 
 Let us show that $\mathcal F$ maps the set $\mathcal X$ into itself. We observe that there exists $C_1>0$ such that
			\begin{equation}\label{eq:strHeat11}
				\begin{split}
					\|	\mathcal{F}(u)\|_{L^\infty([0,T],L^2)} &\leq \|u_0\|_{L^2} + T\| g|u|^{2\sigma} u + \mu[u] u)\|_{L^\infty([0,T],L^2)} \\ 
 &\leq \|u_0\|_{L^2} + C_1 T \big( \| u \|_{L^\infty([0,T],L^{4\sigma + 2})}^{2\sigma + 1} + \| \mu[u]\|_{L^\infty[0,T]} \| u\|_{L^\infty([0,T],L^2)}\big).
				\end{split}
			\end{equation}
			Notice that, for $0<\sigma < \frac{1}{(d-2)^+}$, we have $2<4\sigma + 2 < \frac{2d}{(d-2)^+}$. Thus, by Sobolev's embedding theorem, it follows that there exists a constant $C_2(M,N) > 0$ such that
			\begin{equation}\label{eq:strHeat12}
				\|	\mathcal{F}(u)\|_{L^\infty([0,T],L^2)} \leq \|u_0\|_{L^2} + C_2 T.
			\end{equation}
 Moreover, by using \eqref{eq:linmass}, \eqref{eq:strHeat11} and \eqref{eq:strHeat12} we also see that there exists $C_3 >0$ such that
 \begin{equation*}
 \begin{aligned}
 \| \mathcal F(u)\|_{L^2} &\geq \| e^{t\Delta} u_0\|_{L^2} - \left\| \int_0^t e^{(t-s)\Delta} (g |u|^{2\sigma} u + \mu[u] u) \, ds \right\|_{L^2} \\
 &\geq \bigg( \| u_0\|_{L^2}^2 - 2\int_0^t \| \nabla e^{s \Delta} u_0\|_{L^2}^2 \, ds \bigg)^\frac{1}{2} \\ 
 & - C_1 T \big( \| u \|_{L^\infty([0,T],L^{4\sigma + 2})}^{2\sigma + 1} + \| \mu[u]\|_{L^\infty[0,T]} \| u\|_{L^\infty([0,T],L^2)}\big) \\
 & \geq \bigg( \| u_0\|_{L^2}^2 - C_3 T \| \nabla u_0\|_{L^2}^2 \bigg)^\frac{1}{2} - C_2T.
 \end{aligned}
 \end{equation*}
 In particular, it follows that there exists $K(M,N) >0$ such that
 \begin{equation}\label{eq:strHeat13}
 \inf_{t \in [0,T]} \| \mathcal F(u)\|_{L^2} \geq \| u_0\|_{L^2} - K (T^\frac{1}{2} + T).
 \end{equation}
	Furthermore, by the smoothing property of the heat semigroup for the gradient stated in \eqref{eq:smooth}, we infer that
			\begin{equation*}
				\|	\nabla \mathcal{F}(u)\|_{L^\infty([0,T],L^2)} \lesssim \| u_0\|_{H^1} + T^\frac{1}{2} \| |u|^{2\sigma} u + \mu[u] u \|_{L^\infty([0,T], L^2)}.
			\end{equation*}
	Notice that, since $\sigma < \frac{1}{(d-2)^+}$, there exists a constant $K_1(M,N) >0$ such that
	\begin{equation}\label{eq:strHeat14}
			\|\nabla\mathcal{F}(u)\|_{L^\infty([0,T],L^2)} \leq \|u_0\|_{H^1} + K_1 T^\frac{1}{2}.
	\end{equation}
	We choose $M = \| u_0\|_{H^1}$, $N = \| u_0\|_{L^2}$ and $T >0$ so that 
 \begin{equation}\label{eq:timeHeat1}
 T \leq \frac{M}{C_2}
 \end{equation}
 where $C_2$ is defined in \eqref{eq:strHeat12},
 \begin{equation}\label{eq:timeHeat2}
 T^\frac{1}{2} + T \leq \frac{N}{2K}
 \end{equation}
 where $K$ is defined in \eqref{eq:strHeat13} and 
 \begin{equation}\label{eq:timeHeat3}
 T^\frac{1}{2} \leq \frac{M}{K_1}
 \end{equation}
 where $K_1$ is defined in \eqref{eq:strHeat14}. This implies that $\mathcal{F}:\mathcal{X} \rightarrow \mathcal{X}$.
\newline
Next we show that $\mathcal F$ is a contraction map on $\mathcal X$, namely there exists $0<K_2<1$ such that, for any $u, v\in\mathcal X$, we have
			\begin{equation*}
				\| \mathcal{F}(u) - \mathcal{F}(v)\|_{L^\infty([0,T],H^1)} \leq K_2\| u - v\|_{L^\infty([0,T],H^1)}.
			\end{equation*} 
				From the smoothing effect of the heat semigroup \eqref{eq:smooth} we get
			\begin{equation}\label{eq:tok2}\begin{aligned}
			\| \mathcal{F}(u) - \mathcal{F}(v)\|_{L^{\infty}([0,T], H^1)} &\leq C \left\| |u|^{2\sigma} u - |v|^{2\sigma} v + \mu[u] u - \mu[v] v\right\|_{L^2([0, T],L^2)}\\
			&\leq C T^{\frac12} \left\| |u|^{2\sigma} u - |v|^{2\sigma} v + \mu[u] u - \mu[v] v\right\|_{L^\infty([0, T],L^2)}
		\end{aligned}
		\end{equation}
		As a consequence of inequality \eqref{eq:lem1} we have that, for any $u,v \in \mathcal{X}$ there exists a constant $C_3(M,N) >0$ such that
			\begin{equation}\label{eq:om2}
				\| \mu(u) - \mu(v)\|_{L^{\infty}(0,T)} \leq C_3 \| u-v\|_{L^{\infty}([0,T],H^1)}.
			\end{equation}
	From \eqref{eq:om2} we obtain that
	\begin{equation}\label{eq:tok3}
		\left\| \mu[u] u - \mu[v] v\right\|_{L^{\infty}([0,T], L^2)} \leq C_3 \| u-v\|_{L^{\infty}([0,T],H^1)}. 
	\end{equation}
Moreover, from the inequality \eqref{eq:lem2} we obtain
\begin{equation}\label{eq:tok4}
	\left\| |u|^{2\sigma} u - |v|^{2\sigma} v \right\|_{L^{\infty}([0,T], L^2)} \leq C_3 \| u-v\|_{L^{\infty}([0,T],H^1)}.
\end{equation}
From inequalities \eqref{eq:tok2},\eqref{eq:tok3},\eqref{eq:tok4} we have
\begin{equation*}
	\| \mathcal{F}(u) - \mathcal{F}(v)\|_{L^{\infty}([0,T], H^1)} \leq T^{\frac12} C_3 \| u - v\|_{L^{\infty}([0,T], H^1)}. 
\end{equation*}
By choosing $T$ satisfying conditions \eqref{eq:timeHeat1}, \eqref{eq:timeHeat2}, \eqref{eq:timeHeat3} and such that $T^{\frac12}C_3<1$, we conclude that $\mathcal F$ is a contraction in $\mathcal X$. Thus $\mathcal F$ admits a fixed point $u \in C([0,T],H^1_0(\Omega))$, which is a solution to \eqref{eq:heat}. The uniqueness of this solution follows from the fact that the map $\mathcal F$ is a contraction in $\mathcal X$. Moreover, notice that we can extend the local solution until the $H^1$-norm of $u(t)$ is bounded. Hence we obtain the blow-up alternative, that is, if $T_{max}>0$ is the maximal time of existence, then either $T_{max}= \infty$ or $T_{max}< \infty$ and
			\begin{equation*}
				\lim_{t \rightarrow T_{max}} \| \nabla u(t)\|_{L^2} = \infty.
			\end{equation*}
\end{proof}

\subsection{Local well-posedness for $\sigma \geq \frac{1}{2}$}

In this subsection, we present our third proof for the local well-posedness result, based on a fixed point argument. 
We state the following theorem for the case $\Omega=\R^d$, however, it is straightforward to see that the same proof applies also to the case of bounded domains. The necessity of the restriction $\sigma \geq \frac{1}{2}$ is further commented in the remark \ref{rem:lip} below the proof of the following theorem.

\begin{theorem}\label{thm:lwp}
	Let $\frac{1}{2} \leq \sigma < \frac{2}{(d-2)^+}$. Then, for any $u_0\in H^1(\R^d)$, there exists a maximal time $T_{max}>0$ and a local solution $u \in C([0,T_{max}),H^1(\R^d))$ to \eqref{eq:heat}. Moreover, either $T_{max} = \infty$, or $T_{max} < \infty$ and
\begin{equation*}
\lim_{t \rightarrow T_{max}} \| \nabla u(t)\|_{L^2} = \infty.
\end{equation*}
\end{theorem}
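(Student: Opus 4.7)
The plan is to run a classical contraction argument on the Duhamel integral map
\[
\mathcal F(u)(t) = e^{t\Delta}u_0 + \int_0^t e^{(t-s)\Delta}\bigl(g|u(s)|^{2\sigma}u(s) + \mu[u(s)]\,u(s)\bigr)\,ds,
\]
but, in contrast to Theorem \ref{thm:proof1}, performed directly in the natural $H^1$ topology. Assuming $u_0\not\equiv 0$ (the case $u_0\equiv 0$ being trivial, since $u\equiv 0$ is a solution), set $M=\|u_0\|_{H^1}$, $N=\|u_0\|_{L^2}$, and for $T>0$ to be chosen, consider
\[
\mathcal X = \bigl\{\,u\in L^\infty([0,T],H^1(\R^d)) : \|u\|_{L^\infty_tH^1_x}\leq 2M,\ \inf_{t\in[0,T]}\|u(t)\|_{L^2}\geq N/2\,\bigr\}
\]
with the distance $d(u,v)=\|u-v\|_{L^\infty([0,T],H^1)}$. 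The lower $L^2$ bound is needed so that the denominator of $\mu[\,\cdot\,]$ stays away from zero, and it is preserved along $\mathcal F$ for short times by the same $L^2$-continuity argument used around \eqref{eq:strHeat13}, combined with the dissipative identity \eqref{eq:linmass}.

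For the self-mapping property, I would use the smoothing estimate from Proposition \ref{prop:smooth_par} together with Gagliardo--Nirenberg and the energy-subcritical condition $\sigma<2/(d-2)^+$ to estimate
\[
\|\mathcal F(u)\|_{L^\infty([0,T],H^1)} \leq K_0\|u_0\|_{H^1} + C\,T^{1/2}\bigl(M^{2\sigma+1}+C(M,N)\,M\bigr),
\]
where the second summand bounds the Duhamel contribution by controlling $\||u|^{2\sigma}u\|_{L^2_{t,x}}$ and $\|\mu[u]u\|_{L^2_{t,x}}$; the bound on $\mu[u]$ uses Gagliardo--Nirenberg and the lower $L^2$ bound, exactly as in \eqref{eq:omegaL}. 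For $T$ small, this keeps $\mathcal F(u)$ inside the ball of radius $2M$.

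For the contraction estimate, the decisive step is the $H^1$-Lipschitz bound
\[
\bigl\||u|^{2\sigma}u - |v|^{2\sigma}v\bigr\|_{H^1} \leq C(M)\,\|u-v\|_{H^1},\qquad u,v\in\mathcal X.
\]
The $L^2$ component follows from $\bigl||u|^{2\sigma}u-|v|^{2\sigma}v\bigr|\lesssim (|u|^{2\sigma}+|v|^{2\sigma})|u-v|$ with Sobolev. For the gradient piece one writes, schematically,
\[
\nabla(|u|^{2\sigma}u)-\nabla(|v|^{2\sigma}v) = |u|^{2\sigma}\nabla(u-v) + \bigl(|u|^{2\sigma}-|v|^{2\sigma}\bigr)\nabla v,
\]
and estimates both terms in $L^2$ by H\"older combined with the pointwise inequality
\[
\bigl||u|^{2\sigma}-|v|^{2\sigma}\bigr| \leq C\bigl(|u|^{2\sigma-1}+|v|^{2\sigma-1}\bigr)|u-v|.
\]
This is precisely where the hypothesis $\sigma\geq\tfrac12$ enters: the exponent $2\sigma-1$ must be non-negative for this Lipschitz form to hold. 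For the nonlocal term I would split
\[
\mu[u]u-\mu[v]v = (\mu[u]-\mu[v])u + \mu[v](u-v),
\]
bound $|\mu[u]-\mu[v]|\lesssim C(M,N)\|u-v\|_{H^1}$ by expanding the numerator and applying \eqref{eq:lem1}-type estimates, and control $\mu[v]$ in $L^\infty_t$ via \eqref{eq:omegaL}. Combining these with Proposition \ref{prop:smooth_par} yields
\[
\|\mathcal F(u)-\mathcal F(v)\|_{L^\infty([0,T],H^1)} \leq C(M,N)\,T^{1/2}\,\|u-v\|_{L^\infty([0,T],H^1)},
\]
so choosing $T$ small enough makes $\mathcal F$ a contraction on $\mathcal X$. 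Banach's fixed point theorem produces a unique mild solution in $C([0,T],H^1(\R^d))$, and the usual iteration gives the maximal time of existence together with the blow-up alternative.

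The main obstacle is the simultaneous management of the two nonlinearities in a topology strong enough for the power term yet compatible with the nonlocal one: one must ensure the lower $L^2$ bound $\|u(t)\|_{L^2}\geq N/2$ survives under $\mathcal F$ (otherwise $\mu[u]$ cannot be controlled) while at the same time producing Lipschitz estimates for $|u|^{2\sigma}u$ in $H^1$, which is where the condition $\sigma\geq\tfrac12$ is unavoidable, as already noted in Remark \ref{rem:lip}.
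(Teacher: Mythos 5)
There is a genuine gap: your argument is confined to the topology $L^\infty_t H^1_x$, and in that topology the two key nonlinear estimates you assert do not hold on the full range $\tfrac12\le\sigma<\tfrac{2}{(d-2)^+}$. For the self-mapping bound you need $\||u|^{2\sigma}u\|_{L^2_x}\lesssim \|u\|_{H^1}^{2\sigma+1}$, i.e.\ the embedding $H^1(\R^d)\hookrightarrow L^{4\sigma+2}(\R^d)$, which requires $4\sigma+2\le \tfrac{2d}{(d-2)^+}$, that is $\sigma\le\tfrac{1}{(d-2)^+}$ --- precisely the restriction of Theorem \ref{thm:proof1}, and strictly smaller than the range claimed here (e.g.\ for $d=3$ it misses all $\sigma\in(1,2)$). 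Worse, the claimed Lipschitz bound $\||u|^{2\sigma}u-|v|^{2\sigma}v\|_{H^1}\le C(M)\|u-v\|_{H^1}$ for functions merely bounded in $H^1$ is false for any $d\ge2$: taking $v=0$ it would force $\|\,|u|^{2\sigma}\nabla u\,\|_{L^2}\lesssim\|u\|_{H^1}^{2\sigma+1}$, but with $\nabla u$ only in $L^2$ H\"older leaves you needing $u\in L^\infty$, which $H^1$ does not provide. The same obstruction hits the term $(|u|^{2\sigma}-|v|^{2\sigma})\nabla v$ in your decomposition. So the hypothesis $\sigma\ge\tfrac12$ (which indeed makes $x\mapsto|x|^{2\sigma}x$ locally Lipschitz) is not the only issue; the integrability simply is not there in $L^\infty_tH^1_x$ alone, and your scheme cannot close beyond $\sigma\le\tfrac1{(d-2)^+}$, which is exactly the case already treated by Theorem \ref{thm:proof1}.

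The paper's proof repairs this by enlarging the function space: it runs the contraction on
$E=\{u\in L^\infty_tH^1\cap L^q_tW^{1,r}:\ \|u\|_{L^\infty_tH^1}\le M,\ \|u\|_{L^q_tW^{1,r}}\le M,\ \inf_t\|u\|_{L^2}\ge N/2\}$
with $r=2\sigma+2$, $q=\tfrac{4\sigma+4}{d\sigma}$ an admissible pair for the parabolic space--time estimates of Proposition \ref{prp:sptime}, and with the distance $\|u-v\|_{L^q_tW^{1,r}}+\|u-v\|_{L^\infty_tH^1}$. The nonlinear terms are then estimated in the dual norms $L^{q'}_tW^{1,r'}_x$, where H\"older in space with exponents tuned to $r=2\sigma+2$ works throughout the energy-subcritical range; the restriction $\sigma\ge\tfrac12$ enters only through the pointwise bound $\bigl||u|^{2\sigma}-|v|^{2\sigma}\bigr|\lesssim(|u|^{2\sigma-1}+|v|^{2\sigma-1})|u-v|$ used in \eqref{eq:omega2}, and the strong $L^\infty_tH^1$ component of the metric is kept only because $\mu[\cdot]$ demands it (your treatment of $\mu$, the lower $L^2$ bound via \eqref{eq:linmass}, and the choice $N=\|u_0\|_{L^2}$ are all in line with the paper). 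To fix your proposal, replace the pure $L^\infty_tH^1$ estimates for the power nonlinearity by these dual Strichartz-type estimates; as written, the two displayed nonlinear bounds in your argument fail for $\sigma>\tfrac1{(d-2)^+}$.
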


\begin{proof}
Fix $M, N, T>0$, to be chosen later, let $ r=2\sigma +2$ and 
\begin{equation*}
 q = \frac{4 \sigma + 4}{d \sigma}
\end{equation*}
so that the pair $(q, r)$ satisfies condition \eqref{eq:qrcon}. Consider the set
\begin{equation*}
	\begin{aligned}
		 E = \bigg\{& u \in L^{\infty}\left([0,T], H^1(\R^d) \right) \cap L^{q}\left((0,T), W^{1,r} (\R^d) \right) : \, \\
		 & \| u\|_{L^{\infty}\left((0,T), H^1\right)} \leq M, \ \| u\|_{L^{q}\left((0,T), W^{1,r} \right)} \leq M, \ \inf_{t \in [0,T]} \| u \|_{L^2} \geq \frac{N}{2} \bigg\},		 
	\end{aligned}
\end{equation*}
equipped with the distance
\begin{equation*}
	\mathrm{d}(u, v)=\|u-v\|_{L^{q}\left((0, T), W^{1,r}\right)}+\|u-v\|_{L^{\infty}\left([0, T], H^1\right)}.
\end{equation*}
Clearly $(E, d)$ is a complete metric space. Consider now $u,v \in E$ and observe that the inequality 
\begin{equation*}
	\left||u|^{2\sigma}u - |v|^{2\sigma} v \right| \lesssim \left(|u|^{2\sigma}+|v|^{2\sigma}\right)|u-v|,
\end{equation*} 
implies that
\begin{equation*}
	\begin{aligned}
		&\left\||u|^{2\sigma} u - |v|^{2\sigma} v \right\|_{L^{q^{\prime}}\left((0, T), L^{r^\prime}\right)} \lesssim \left(\|u\|_{L^{\infty}\left([0, T], L^{r}\right)}^{2\sigma}+\|v\|_{L^{\infty}\left([0,T], L^{r}\right)}^{2\sigma}\right)\|u-v\|_{L^{q^\prime}\left((0, T), L^{r}\right)}.
	\end{aligned}
\end{equation*}
Moreover, using the embedding $H^{1}(\R^{d}) \hookrightarrow L^{r}( \R^d )$ and H\"older's inequality, we have
\begin{equation*}
	\left\| \nabla \left( |u|^{2\sigma} u - |v|^{2\sigma} v \right) \right\|_{L^{r^{\prime}}} \lesssim \left( \|u\|_{L^{r}}^{2\sigma-1} + \|v\|_{L^{r}}^{2\sigma-1}\right) \left( \|\nabla u\|_{L^{r}} + \|\nabla v\|_{L^{r}} \right) \|u - v\|_{H^1}
\end{equation*}
which implies
\begin{equation}\label{eq:omega2}
	\begin{aligned}
		&\left\|\nabla \left(|u|^{2\sigma} u - |v|^{2\sigma} v \right) \right\|_{L^{q^{\prime}}\left((0, T), L^{r^{\prime}}\right)} \lesssim \left(\|u\|_{L^{\infty}\left([0,T], L^{r}\right)}^{2\sigma-1}+\|v\|_{L^{\infty}\left([0,T], L^{r}\right)}^{2\sigma-1}\right) \\ 
		& \cdot\left( \| \nabla u\|_{L^{q^\prime}((0, T), L^{r})} + \| \nabla v\|_{L^{q^\prime}((0, T), L^{r})} \right)\|u-v\|_{L^{\infty}\left([0,T], H^1\right)}.
	\end{aligned} 
\end{equation}
Using H\"older's inequality in time, we deduce from the above estimates that
\begin{equation*}
	 \left\||u|^{2\sigma}u \right\|_{L^{q^{\prime}}\left((0, T), W^{1, r^{\prime}}\right)} \lesssim \left(T+T^{\frac{q-q^{\prime}}{q q^{\prime}}}\right)\left(1+M^{2\sigma}\right) M
\end{equation*}
 and
\begin{equation*}
	\left\||u|^{2\sigma}u - |v|^{2\sigma} v \right\|_{L^{q^{\prime}}\left((0, T), W^{1, r^{\prime}}\right)}\lesssim \left(T+T^{\frac{q-q^{\prime}}{q q^{\prime}}}\right)\left(1+M^{2\sigma}\right) \mathrm{d}(u, v) .
\end{equation*}
Next, we observe that, for any $u,v \in E$, from 
\begin{equation*}
	\left| \mu[u] u - \mu[v] v \right| \leq |v| \left| \mu[u] - \mu[v] \right| + \left| \mu [u] \right| \left| u - v\right|,
\end{equation*}
it follows that
\begin{equation*}
	\begin{aligned}
	\left\| \mu[u]u - \mu[v] v \right\|_{L^{1}\left((0, T),H^1 \right)} &\lesssim T \big( \| \mu[u]\|_{L^\infty[0,T]} \| u - v\|_{L^{\infty}\left([0,T],H^1 \right)} \\ 
	 & +\| v \|_{L^{\infty}\left([0,T],H^1 \right)} \left\| \mu[u] - \mu[v] \right \|_{L^{\infty}[0,T]} \big).
	\end{aligned}
\end{equation*}
We also notice that from the embedding $H^{1}\left(\R^{d}\right) \hookrightarrow L^{2\sigma + 2}\left( \R^d \right)$ there exists $C(M,N) >0$ such that
\begin{equation}\label{eq:muHeatEst}
 \| \mu[u]\|_{L^\infty[0,T]} \lesssim \frac{1}{N^2} \big( \| u\|_{L^\infty([0,T],H^1)}^2 + \| u\|_{L^\infty([0,T],H^1)}^{2\sigma + 2}\big) \leq C,
\end{equation}
and 
\begin{equation}\label{eq:omega1}
\begin{aligned}
 \left\| \mu[u] - \mu[v] \right\|_{L^{\infty}[0,T]} \leq C \| u- v \|_{L^{\infty}\left([0,T],H^1 \right)}.
\end{aligned}
\end{equation}
For any $u_0 \in H^1(\R^d)$, let $\mathcal{F}(u_0)(u)(t) = \mathcal{F}(u)(t) $ be defined as
\begin{equation*}
	\mathcal{F}(u)(t)=e^{t\Delta} u_0+ \int_{0}^{t} e^{(t-s)\Delta} \left( g|u|^{2\sigma} u + \mu[u]u \right) \, ds .
\end{equation*}
By using the space-time estimates in Proposition \ref{prp:sptime}, the embedding $H^1(\R^d) \hookrightarrow L^r(\R^d) $ and H\"older's inequality, we obtain
\begin{equation}\label{eq:strHeat1}
	\begin{split}
	\| \mathcal F (u)\|_{L^q([0,T),L^r) \cap L^\infty([0,T],L^2)} &\lesssim \| u_0\|_{L^2} + T^{\frac{q-q^{\prime}}{q q^{\prime}}} \| u\|^{2\sigma}_{L^\infty([0,T],H^1)} \| u\|_{L^q([0,T),L^r)} \\ 
	&+ T \| \mu[u] \|_{L^\infty[0,T]} \| u\|_{L^\infty([0,T],L^2)}
	\end{split}
\end{equation}
and 
\begin{equation}\label{eq:strHeat2}
	\begin{aligned}
		\| \nabla \mathcal F (u) \|_{L^q([0,T),L^r) \cap L^\infty([0,T],L^2)} & \lesssim \| \nabla u_0\|_{L^2} + T^{\frac{q-q^{\prime}}{q q^{\prime}}} \| u\|^{2\sigma}_{L^\infty([0,T],H^1)} \| \nabla u\|_{L^q([0,T),L^r)} \\
		&+ T\| \mu[u] \|_{L^\infty[0,T]} \|\nabla u\|_{L^\infty([0,T],L^2)}.
	\end{aligned}
\end{equation}
As a consequence of \eqref{eq:strHeat1}, \eqref{eq:strHeat2} and \eqref{eq:muHeatEst}, there exist $C_1>0$ and $K(M,N) >0$ such that
\begin{equation*}
\|\mathcal F (u)\|_{L^\infty([0,T], H^1) \cap L^{q}\left((0, T), W^{1, r}\right)} \leq \\
C_1 \|u_0\|_{H^{1}}+C_1\left(T+T^{\frac{q-q^{\prime}}{q q}}\right)K M.
\end{equation*}
In the same way, by exploiting \eqref{eq:omega1}, we also have that
\begin{equation*}
	\begin{aligned}
		\|\mathcal F (u)-\mathcal F (v)&\|_{L^{\infty}\left(\left[0,T], H^1 \right)\right.} +\|\mathcal F (u)-\mathcal F (v)\|_{L^{q}\left((0, T), W^{1,r}\right)} \leq C_1\left(T+T^{\frac{q-q^{\prime}}{q q^{\prime}}}\right)K \mathrm{d}(u, v) .
	\end{aligned}
\end{equation*}
We set $ M=2 C_1\|u_0\|_{H^{1}}$
and we choose $T > 0$ small enough so that
\begin{equation}\label{eq:tempHeat1}
	C_1\left(T+T^{\frac{q-q^{\prime}}{q q}}\right)K \leq \frac{1}{2},
\end{equation} 
which is possible because
\begin{equation*}
	\frac{q-q^{\prime}}{q q^{\prime}} = \frac{2 + 2\sigma - d\sigma }{2\sigma+2}>0.
\end{equation*}
Moreover, we observe that for any $t \in [0,T]$, there exists $C_2, C_3 >0$ such that
\begin{equation*}
 \begin{aligned}
 \big\| \mathcal F (u) (t) \big\|_{L^2} & \geq \| e^{t\Delta} u_0 \|_{L^2} - \bigg\| \int_{0}^{t} e^{(t-s)\Delta} \left( g|u|^{2\sigma} u + \mu[u]u \right) \, ds \bigg\|_{L^{\infty}([0,T],L^2)} \\
 &\geq \bigg( \| u_0 \|_{L^2}^2 - 2\int_0^t \big\| \nabla e^{\tau \Delta} u_0 \big\|_{L^2}^2 \, d\tau \bigg)^\frac{1}{2} \\
 &- C_2 \big(T^{\frac{q-q^{\prime}}{q q^{\prime}}} \| u\|^{2\sigma}_{L^\infty([0,T],H^1)} \| u\|_{L^q([0,T),L^r)} + T \| \mu[u] \|_{L^\infty[0,T]} \| u\|_{L^\infty([0,T],L^2)}\big) \\
 & \geq \bigg( \| u_0 \|_{L^2}^2 - C_3 T \big\| \nabla u_0 \big\|_{L^2}^2 \bigg)^\frac{1}{2} \\
 &- C_2 \big(T^{\frac{q-q^{\prime}}{q q^{\prime}}} \| u\|^{2\sigma}_{L^\infty([0,T],H^1)} \| u\|_{L^q([0,T),L^r)} + T \| \mu[u] \|_{L^\infty[0,T]} \| u\|_{L^\infty([0,T],L^2)}\big)
 \end{aligned}
\end{equation*}
Thus, by exploiting \eqref{eq:muHeatEst}, we have that there exists $K_1(M,N) >0 $ such that
\begin{equation*}
 \inf_{t \in [0,T]} \big\| \mathcal F (u) (t) \big\|_{L^2} \geq \| u_0\|_{L^2} - K_1 (T^\frac{1}{2} + T^{\frac{q-q^{\prime}}{q q^{\prime}}} + T).
\end{equation*}
Now we set $N = \| u_0 \|_{L^2}$. By choosing $T >0$ which satisfies condition \eqref{eq:tempHeat1} and also 
\begin{equation*}
 (T^\frac{1}{2} + T^{\frac{q-q^{\prime}}{q q^{\prime}}} + T) \leq \frac{N}{2 K_1}
\end{equation*}
we obtain that $\mathcal F$ maps $E$ into itself and it is a contraction. This is enough to conclude that there exists a local in time mild solution to \eqref{eq:heat}. The uniqueness follows from the fact that $\mathcal F $ is a contraction. Moreover, we obtain the blow-up alternative \eqref{eq:bu1} because we can repeat this argument extending the solution locally in time until the $H^1$-norm of the solution diverges. 
\end{proof}

\begin{remark}\label{rem:lip}
Let us emphasize why we need the condition $\sigma \geq \frac{1}{2}$. On one hand, in the absence of the nonlocal term $\mu$, we could use the contraction principle in the set $E$ with the weaker distance 
 \begin{equation*}
	\mathrm{g}(u, v)=\|u-v\|_{L^{q}\left((0, T), L^r\right)}+\|u-v\|_{L^{\infty}\left((0, T), L^2\right)}.
\end{equation*}
Indeed, it is standard to prove that $(E,\mathrm{g})$ is a complete metric space. On the other hand, the presence of $\mu$ requires a stronger distance induced by the $L^\infty_tH^1_x$-norm, as it is clear from inequality \eqref{eq:omega1}. This implies that we have to use the distance $\mathrm{d}$ instead of $\mathrm{g}$. But for $\sigma < \frac{1}{2}$, this is not possible because the power-type nonlinearity is not locally Lipschitz continuous in Sobolev spaces (namely, inequality \eqref{eq:omega2} fails).
\end{remark}

\subsection{Global well-posedness}\label{ss:gwp}

From Proposition \ref{prop:upgrade} we obtain that for any $u_0\in H^1_0(\Omega)$ and $t\in [0,T_{max}(u_0)]$, the energy of the corresponding solution satisfies
	\begin{equation*}
		E[u(t)] = E[u_0] - \int_0^t \int |\partial_s u(s)|^2 \,dx \, ds,
	\end{equation*}
	and thus it is non-increasing in time. 	The global existence follows from a classical argument which combines the blow-up alternative \eqref{eq:bu1} and the bound on the energy.
 
\begin{theorem}
		Let $g \leq 0$, $\sigma < \frac{2}{(d-2)^+}$ or $g > 0$, $\sigma < \frac{2}{d}$. Let $\Omega$ be a regular bounded domain or $\Omega = \R^d$. Then for any
		$ u_0 \in H^1_0(\Omega)$ there exists a unique global solution $u \in C([0,\infty), H^1_0(\Omega))$ to \eqref{eq:heat}.
\end{theorem}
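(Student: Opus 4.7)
The plan is to combine the local well-posedness and blow-up alternative furnished by Theorems~\ref{thm:proof2}, \ref{thm:proof1} and \ref{thm:lwp} with the energy dissipation identity of Proposition~\ref{prop:upgrade} and the $L^2$-conservation enforced by $\mu[u]$. Specifically, under either hypothesis on $(g,\sigma)$ I will produce a constant $C=C(\|u_0\|_{H^1})$ such that any local solution $u\in C([0,T_{max}),H^1_0(\Omega))$ satisfies $\|\nabla u(t)\|_{L^2}\leq C$ for all $t\in[0,T_{max})$; the blow-up alternative then immediately forces $T_{max}=\infty$.

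For the case $g\leq 0$ the argument is essentially a one-liner: since $-\tfrac{g}{2\sigma+2}\|u\|_{L^{2\sigma+2}}^{2\sigma+2}\geq 0$, the energy dominates the Dirichlet form, so by Proposition~\ref{prop:upgrade}
\begin{equation*}
\tfrac{1}{2}\|\nabla u(t)\|_{L^2}^2 \leq E[u(t)] \leq E[u_0],
\end{equation*}
which is the desired uniform bound.

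The case $g>0$, $\sigma<2/d$ is slightly more delicate because the nonlinear term in $E$ has the bad sign and the energy alone is no longer coercive. Here I would invoke the $L^2$-conservation $\|u(t)\|_{L^2}=\|u_0\|_{L^2}$ together with the Gagliardo--Nirenberg inequality
\begin{equation*}
\|u(t)\|_{L^{2\sigma+2}}^{2\sigma+2}\leq C_{GN}\,\|\nabla u(t)\|_{L^2}^{d\sigma}\,\|u(t)\|_{L^2}^{2+2\sigma-d\sigma}.
\end{equation*}
The crucial point is that the strict $L^2$-subcriticality $d\sigma<2$ allows one to absorb the nonlinear contribution into the Dirichlet form via Young's inequality with exponents $2/(d\sigma)$ and its conjugate: for any $\varepsilon>0$ there is a constant $C_\varepsilon=C_\varepsilon(\|u_0\|_{L^2},g)$ such that
\begin{equation*}
\frac{g}{2\sigma+2}\|u(t)\|_{L^{2\sigma+2}}^{2\sigma+2}\leq \varepsilon\,\|\nabla u(t)\|_{L^2}^2 + C_\varepsilon.
\end{equation*}
Plugging this into the identity $\tfrac12\|\nabla u(t)\|_{L^2}^2 = E[u(t)] + \tfrac{g}{2\sigma+2}\|u(t)\|_{L^{2\sigma+2}}^{2\sigma+2}$, using $E[u(t)]\leq E[u_0]$, and choosing $\varepsilon=1/4$, yields the a priori bound $\|\nabla u(t)\|_{L^2}^2\leq 4E[u_0]+4C_{1/4}$.

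I do not foresee a genuine obstacle: the local theory has already absorbed the delicate interplay between $|u|^{2\sigma}u$ and $\mu[u]u$, and the only structural requirement is that the nonlinear term in $E$ cannot destroy the coercivity in $H^1_0$, which is precisely what the hypothesis on $(g,\sigma)$ ensures. The one point to watch is that $L^2$-conservation provides a time-uniform control of the constants in Gagliardo--Nirenberg in terms of $\|u_0\|_{L^2}$, so the bound on $\|\nabla u(t)\|_{L^2}$ is global in time. Uniqueness and continuity in $H^1_0(\Omega)$ are already part of the local statement, so no further work is required.
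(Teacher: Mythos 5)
Your proposal is correct and follows essentially the same route as the paper: uniform $H^1$ bounds from the monotone energy (via Proposition~\ref{prop:upgrade}) in the case $g\leq 0$, and from mass conservation plus Gagliardo--Nirenberg with $d\sigma<2$ (absorbing the sublinear power $\|\nabla u\|_{L^2}^{d\sigma}$, which you make explicit via Young's inequality) in the case $g>0$, concluded by the blow-up alternative of the local theory.
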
 

\begin{proof}
		Let $u_0 \in H^1_0(\Omega)$ and let $u \in C([0,T_{max}), H^1_0(\Omega))$ be the corresponding local solution to \eqref{eq:heat} given by Theorems \ref{thm:proof2}, \ref{thm:proof1} or \ref{thm:lwp}. Then the $L^2$-norm of the solution is constant and the energy is non-increasing in time. Thus for $g \leq 0$, $0 < \sigma < \frac{2}{(d-2)^+}$, we have
		\begin{equation*}
			\| \nabla u(t) \|_{L^2}^2 \leq 2 E[u(t)] \leq 2E[u_0],
		\end{equation*} 
		while for $g > 0$, $\sigma < \frac{2}{d}$, we use the Gagliardo-Nirenberg's inequality 
		\begin{equation*}
			\|\nabla u(t)\|_{L^2}^2 = E[u(t)] + \frac{g}{\sigma + 1} \|u(t)\|_{L^{2\sigma + 2}}^{2\sigma + 2} \lesssim E[u_0] + \|\nabla u(t)\|^{d\sigma}_{L^2},
		\end{equation*}
	and $d\sigma < 2$ to conclude that
		\begin{equation*}
			\| \nabla u(t) \|_{L^2} \lesssim \| \nabla u_0\|_{L^2}.
		\end{equation*}
		In particular, in both cases, the $H^1$-norm of the solution is uniformly bounded in time. From the blow-up alternative \eqref{eq:bu1} we conclude that the solution is global. 
\end{proof}
	
In what follows, we will provide an alternative sufficient condition for global existence in bounded domains, as stated in Theorem \ref{thm:pot1}. The proof is based on the potential well method which was introduced in Section \ref{sec:potwel}. We start with the following lemma where we use the notations introduced in Section \ref{sec:potwel}. 
	
\begin{lem}\label{thm:potwel}
 Let $g > 0$ and $0 < \sigma < \frac{2}{(d-2)^+}$. If $u_0 \in \mathcal W$ (respectively $u_0 \in \mathcal Z$), then the corresponding solution $u\in C([0,T_{max}(u_0)), H^1_0(\Omega))$ to \eqref{eq:heat} is so that $u(t) \in \mathcal W$ (respectively $u(t) \in \mathcal Z$) for all $t \in [0,T_{max}(u_0))$. 
\end{lem}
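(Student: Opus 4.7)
The plan is to combine three ingredients: the energy dissipation identity \eqref{eq:derEn}, the $L^2$–norm conservation built into the flow, and the continuity in time of $u(t)$ in $H^1_0(\Omega)$ (which in turn makes $I[u(t)]$ continuous). The invariance of $\mathcal{W}$ and $\mathcal{Z}$ will then follow by a standard continuity/contradiction argument against the definition of the mountain-pass level $p$.

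More precisely, let $u_0\in\mathcal{W}$, $u_0\neq 0$ (the case $u_0=0$ is trivial, since the unique solution remains $0\in\mathcal{W}$). By Proposition \ref{prop:upgrade}, the energy is non-increasing along the flow, so
\begin{equation*}
E[u(t)]\leq E[u_0]<p \quad \text{for every } t\in[0,T_{max}(u_0)).
\end{equation*}
Because the $L^2$-norm is preserved, $\|u(t)\|_{L^2}=\|u_0\|_{L^2}>0$, and in particular $u(t)\neq 0$ for every $t$. Since $u\in C([0,T_{max}),H^1_0(\Omega))$ and $H^1_0(\Omega)\hookrightarrow L^{2\sigma+2}(\Omega)$ (here we use the assumption $\sigma<\tfrac{2}{(d-2)^+}$), the functional $t\mapsto I[u(t)]$ is continuous. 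Suppose, toward a contradiction, that $I[u(t_*)]\leq 0$ for some $t_*\in(0,T_{max})$, and let $t_*$ be the first such time; continuity then forces $I[u(t_*)]=0$. But $u(t_*)\neq 0$, so $u(t_*)$ is admissible in the infimum defining $p$, giving $E[u(t_*)]\geq p$, which contradicts $E[u(t_*)]<p$. Hence $I[u(t)]>0$ for all $t\in[0,T_{max})$, and $u(t)\in\mathcal{W}$.

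The argument for $\mathcal{Z}$ proceeds identically. If $u_0\in\mathcal{Z}$, then $u_0\neq 0$ (since $I[0]=0$), so mass conservation again keeps $u(t)\neq 0$ throughout the lifespan. The energy bound $E[u(t)]\leq E[u_0]<p$ still holds, and if $I[u(t)]$ were to become non-negative at some first time $t_*$, continuity would yield $I[u(t_*)]=0$ with $u(t_*)\neq 0$, hence $E[u(t_*)]\geq p$, again contradicting the energy inequality.

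I do not anticipate a genuine obstacle: the only technical point worth checking is the continuity of $I[u(t)]$, which reduces to the continuity of the map $u\mapsto \|u\|_{L^{2\sigma+2}}^{2\sigma+2}$ on $H^1_0(\Omega)$ in the subcritical range, a direct consequence of the Sobolev embedding. Everything else is the standard invariance mechanism of the potential well method, adapted here to the $L^2$-preserving flow, where mass conservation replaces the usual role of excluding the trivial function along the trajectory.
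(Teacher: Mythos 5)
Your proof is correct and follows essentially the same route as the paper: monotonicity of the energy from \eqref{eq:derEn} gives $E[u(t)]\le E[u_0]<p$, and a continuity/first-crossing argument shows $I[u(t)]$ cannot vanish without contradicting the definition of $p$ in \eqref{eq:infP}. Your explicit appeal to mass conservation to ensure $u(t_*)\neq 0$ (so that $u(t_*)$ is admissible in the infimum defining $p$) just makes precise a point the paper's proof leaves implicit.
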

	
\begin{proof}
	Fix $g = 1$ without losing generality and suppose that $u_0 \in \mathcal{W}$. Let $T = T_{max}(u_0)$ be the maximal time of existence of the corresponding solution to \eqref{eq:heat}. Equation \eqref{eq:derEn} implies that $E[u(t)] \leq E[u_0] < p$, for all $t \in [0,T)$. 
\newline
 Since $I[u_0] >0$, it follows by the continuity of the flow associated with \eqref{eq:heat} that $I[u(t)] >0$ for all $t \in [0,T)$. Indeed, by contradiction, suppose that there exists a time $t_0 >0$ such that $ I[u(t_0)] =0$ and $E[u(t_0)] < p$. Notice that this contradicts the definition of $p$. Hence $u(t) \in \mathcal W$. 
 \newline
 The same argument applies for the case when $u_0 \in \mathcal Z$.
\end{proof}

Lemma \ref{thm:potwel} implies that the sets $\mathcal W$ and $\mathcal Z$ are invariant under the flow of equation \eqref{eq:heat}. As a consequence, we obtain the following.

\begin{theorem}
 Let $g > 0$ and $0 < \sigma < \frac{2}{(d-2)^+}$. If $u_0 \in \mathcal W$, then the corresponding solution $u\in C([0,T_{max}(u_0)), H^1_0(\Omega))$ to \eqref{eq:heat} is global in time, that is $T_{max}(u_0) = \infty$.
\end{theorem}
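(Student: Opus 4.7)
The plan is to combine the invariance of $\mathcal{W}$ under the flow, established in Lemma \ref{thm:potwel}, with the energy inequality from Proposition \ref{prop:upgrade} and the blow-up alternative \eqref{eq:bu1}. Without loss of generality, as in Section \ref{sec:potwel}, take $g=1$. The goal is to extract a uniform-in-time bound on $\|\nabla u(t)\|_{L^2}$ on $[0,T_{max}(u_0))$, which by the blow-up alternative immediately forces $T_{max}(u_0)=\infty$.

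First, I would invoke Lemma \ref{thm:potwel} to guarantee that $u(t)\in\mathcal{W}$ for every $t\in[0,T_{max}(u_0))$. In particular, $I[u(t)]>0$ and $E[u(t)]<p$ throughout the existence interval. The identity \eqref{eq:derEn} further gives $E[u(t)]\leq E[u_0]<p$ for all such $t$.

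Next, I would exploit the positivity of $I$ to control the $H^1$-norm by the energy. Since
\begin{equation*}
E[u(t)] = \frac{1}{2}\|\nabla u(t)\|_{L^2}^2 - \frac{1}{2\sigma+2}\|u(t)\|_{L^{2\sigma+2}}^{2\sigma+2},
\end{equation*}
and $I[u(t)]>0$ means $\|u(t)\|_{L^{2\sigma+2}}^{2\sigma+2}<\|\nabla u(t)\|_{L^2}^2$, a direct substitution yields
\begin{equation*}
E[u(t)] > \frac{1}{2}\|\nabla u(t)\|_{L^2}^2 - \frac{1}{2\sigma+2}\|\nabla u(t)\|_{L^2}^2 = \frac{\sigma}{2\sigma+2}\|\nabla u(t)\|_{L^2}^2.
\end{equation*}
Combined with $E[u(t)]\leq E[u_0]$, this gives the uniform bound $\|\nabla u(t)\|_{L^2}^2 < \frac{2\sigma+2}{\sigma}E[u_0]$ for every $t\in[0,T_{max}(u_0))$.

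Finally, I would conclude by the blow-up alternative \eqref{eq:bu1}: since $\|\nabla u(t)\|_{L^2}$ cannot diverge, $T_{max}(u_0)=\infty$. No step in this argument is really delicate; the main conceptual point, already carried out in Lemma \ref{thm:potwel}, is the invariance of $\mathcal{W}$, which rests on the continuity of the flow together with the definition of $p$ as an infimum over the nodal surface $\{I=0\}$. Theorem \ref{thm:potK} on $\R^d$ will require a different argument because Sobolev's best constant is no longer attained in $H^1(\R^d)$ in the relevant way, but the bounded-domain statement follows directly from the chain above.
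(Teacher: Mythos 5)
Your proposal is correct and follows essentially the same route as the paper: invariance of $\mathcal W$ via Lemma \ref{thm:potwel}, the bound $\frac{\sigma}{2\sigma+2}\|\nabla u(t)\|_{L^2}^2 \leq E[u(t)] \leq E[u_0] < p$ obtained from $I[u(t)]>0$ and \eqref{eq:derEn}, and conclusion by the blow-up alternative \eqref{eq:bu1}. No differences worth noting.
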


\begin{proof}
 We fix $g = 1$ without losing generality. The condition $u_0 \in \mathcal W$ and Lemma \ref{thm:potwel} imply that $u(t) \in \mathcal{W}$ for every $t \in [0,T_{max}(u_0))$ and consequently that
 \begin{equation*}
	 \| \nabla u(t) \|_{L^2}^2 > \| u(t) \|_{L^{2\sigma + 2}}^{2\sigma + 2}.
 \end{equation*}
 Equation \eqref{eq:derEn} then implies that 
 \begin{equation*}
 	\left( \frac{1}{2} - \frac{1}{2\sigma + 2}\right) \| \nabla u(t) \|_{L^2}^2 \leq E[u(t)] \leq E[u_0] < p.
 \end{equation*}
 Hence the $H^1$-norm of $u$ is uniformly bounded by $\frac{(2\sigma + 2) p }{\sigma}$. From the blow-up alternative \eqref{eq:bu1}, we conclude that $T_{max}(u_0) = \infty$. 
\end{proof}

\begin{remark}
Proposition \ref{prp:minim} provides a sufficient smallness condition on the initial datum for global existence. Indeed, if $u_0 \in H^1_0(\Omega)$ is so that $\| \nabla u_0 \|_{L^2}^2 \leq 2p$, then $u_0 \in \mathcal{ W}$. 
\end{remark}

\begin{remark}
	Notice that when $\Omega$ is a bounded domain, the set $\mathcal W$ is not empty because $\| \nabla u\|_{L^2} < \sqrt{2p}$ implies $u \in \mathcal{W}$ (see Proposition \ref{prp:minim}), while the set $\mathcal{Z}$ is not empty since if $E[u] \leq 0$, then from \eqref{eq:itoE}, $I[u] < 0$ and $u \in \mathcal{Z}$.
	 \newline
 On the other hand, when $\Omega = \R^d$, we have $p = 0$ and $\mathcal W$ is empty. Indeed, take any $u \neq 0$ such that $I[u] = 0$. Notice that $E[u] > 0$. 
	Let $v(x) = \lambda^\frac{1}{\sigma} u(\lambda x)$. Then 
	\begin{equation*}
	 I[v] = \lambda^{2 + \frac{2}{\sigma} - d} I[u] = 0,
	\end{equation*}
	while 
	\begin{equation*}
	 E[v] = \lambda^{2 + \frac{2}{\sigma} - d} E[u] > 0.
	\end{equation*}
	Thus, for $\lambda \to 0$, we obtain that $E[v] \to 0$. As a consequence, from \eqref{eq:itoE}, the set $\mathcal{W}$ is empty when $\Omega = \R^d$	and Theorem \ref{thm:pot1} does not provide any additional sufficient conditions for the global existence in $H^1(\R^d)$. 
	\newline
	Finally, if $u\in H^1(\R^d)$ is such that $E[u] < 0$, then $u \in \mathcal{Z}$ so $\mathcal{Z}$ is not empty. Note that \eqref{eq:poh1} and \eqref{eq:poh2} imply that a stationary state $Q$, solution to \eqref{eq:st_st} in $\R^d$, satisfies
	\begin{equation}\label{eq:enQ}
	 E[Q] = \frac{d\sigma -2 }{2d\sigma} \| \nabla Q\|_{L^2}^2,
	\end{equation}
	 and thus, when $\sigma < \frac{2}d$, $Q$ belongs to $\mathcal Z$.
	\end{remark} 
	
We proceed by providing additional sufficient conditions for global existence in the whole space $\R^d$, as stated in Theorem \ref{thm:potK}. We use the same notations introduced in Section \ref{sec:potK}. We start by proving that the set $\mathcal K$ is invariant under the flow of equation \eqref{eq:heat}.

\begin{lem}
Let $\frac{2}{d} < \sigma < \frac{2}{d-2}$ and $g \geq 0$.	If $u_0 \in \mathcal K$, then the corresponding solution $u \in C([0,T_{max}(u_0)), H^1(\R^d))$ to \eqref{eq:heat} is so that $u(t) \in \mathcal K$ for all $t \in [0,T_{max}(u_0))$.
\end{lem}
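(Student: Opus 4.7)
The plan rests on the conservation of the $L^2$-norm together with the non-increasing property of the energy (Proposition \ref{prop:upgrade}), followed by a continuity (bootstrap) argument driven by the sharp Gagliardo--Nirenberg inequality \eqref{eq:GN}. Fix $g=1$; the case $g=0$ is immediate since then $E[u(t)]=\tfrac12\|\nabla u(t)\|_{L^2}^2$ is non-increasing. First observe that for any $t\in[0,T_{max}(u_0))$, combining $\|u(t)\|_{L^2}=\|u_0\|_{L^2}$ with $E[u(t)]\le E[u_0]$ yields
\begin{equation*}
E[u(t)]\,\|u(t)\|_{L^2}^{2\alpha}\le E[u_0]\,\|u_0\|_{L^2}^{2\alpha}<E[Q]\,\|Q\|_{L^2}^{2\alpha},
\end{equation*}
so the first condition defining $\mathcal K$ is automatically propagated. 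Only the gradient condition requires work.

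To handle it, I would introduce the auxiliary function
\begin{equation*}
h(y)=\tfrac12\,y-\tfrac{C_{GN}}{2\sigma+2}\,y^{d\sigma/2},\qquad y\ge0.
\end{equation*}
Multiplying \eqref{eq:GN} by $\|u(t)\|_{L^2}^{2\alpha}$ and using the algebraic identity $2+2\sigma-d\sigma+2\alpha=d\sigma\,\alpha$, which follows straight from the definition of $\alpha$, gives
\begin{equation*}
E[u(t)]\,\|u(t)\|_{L^2}^{2\alpha}\ge h\!\left(\|\nabla u(t)\|_{L^2}^2\,\|u(t)\|_{L^2}^{2\alpha}\right).
\end{equation*}
Since $d\sigma/2>1$ (because $\sigma>2/d$), the function $h$ is strictly concave on the relevant range, increasing up to a unique maximiser $y^\ast>0$ and then decreasing. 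A direct computation using \eqref{eq:cGN} shows that $y^\ast=\|\nabla Q\|_{L^2}^2\,\|Q\|_{L^2}^{2\alpha}$, and the Pohozaev identities \eqref{eq:poh1}--\eqref{eq:poh2} give $h(y^\ast)=\tfrac{d\sigma-2}{2d\sigma}y^\ast=E[Q]\,\|Q\|_{L^2}^{2\alpha}$.

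The conclusion then follows by continuity. The hypothesis $u_0\in\mathcal K$ means $\|\nabla u_0\|_{L^2}^2\,\|u_0\|_{L^2}^{2\alpha}<y^\ast$. If the gradient condition ever failed, by continuity of $t\mapsto\|\nabla u(t)\|_{L^2}$ on $[0,T_{max}(u_0))$ there would exist a first time $t_1$ with $\|\nabla u(t_1)\|_{L^2}^2\,\|u(t_1)\|_{L^2}^{2\alpha}=y^\ast$. Evaluating the displayed inequality at $t_1$ yields
\begin{equation*}
E[u(t_1)]\,\|u(t_1)\|_{L^2}^{2\alpha}\ge h(y^\ast)=E[Q]\,\|Q\|_{L^2}^{2\alpha},
\end{equation*}
contradicting the monotonicity bound of the first paragraph. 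Hence $u(t)\in\mathcal K$ throughout $[0,T_{max}(u_0))$.

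The only non-routine point is the identification $h(y^\ast)=E[Q]\|Q\|_{L^2}^{2\alpha}$; once the formula \eqref{eq:cGN} for the sharp Gagliardo--Nirenberg constant and the Pohozaev relations are available, it is a short computation. Everything else --- the definition of $h$, the concavity picture, and the continuity argument --- is standard, so I do not expect a substantial obstacle.
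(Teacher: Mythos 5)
Your proposal is correct and follows essentially the same route as the paper's proof: mass conservation plus energy decay propagate the energy condition, the sharp Gagliardo--Nirenberg inequality bounds the scaled energy below by a one-variable function whose unique maximizer and maximum value are identified with $\|\nabla Q\|_{L^2}^2\|Q\|_{L^2}^{2\alpha}$ and $E[Q]\|Q\|_{L^2}^{2\alpha}$, and a continuity/first-time argument rules out the gradient quantity ever reaching that threshold. The only (cosmetic) differences are that you work with the squared variable $y=\|\nabla u\|_{L^2}^2\|u\|_{L^2}^{2\alpha}$ and compute $h(y^\ast)$ via the critical-point relation and Pohozaev, whereas the paper uses $f(x)$ with $x=\|\nabla u\|_{L^2}\|u\|_{L^2}^{\alpha}$ and invokes the equality case of Gagliardo--Nirenberg for $Q$.
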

 
\begin{proof}
 We suppose that $g = 1$ without losing generality. By Gagliardo-Nirenberg's inequality \eqref{eq:GN} it follows that
 	\begin{equation}\label{eq:k1}
 		E[u] \| u \|_{L^2}^{2\alpha} \geq \frac{1}{2} \| \nabla u\|_{L^2}^2 \| u \|_{L^2}^{2\alpha} - \frac{C_{GN}}{2\sigma + 2} \| \nabla u\|_{L^2}^{d\sigma} \| u\|_{L^2}^{d\sigma \alpha} = f(\| \nabla u\|_2 \| u \|_{L^2}^{\alpha}),
 	\end{equation}
 where
 	\begin{equation*}
 		f(x) = \frac{1}{2} x^2 - \frac{C_{GN}}{2\sigma + 2}x^{d\sigma}.
 	\end{equation*}
 By taking the derivative of $f$, we notice that it has two critical points
	\begin{equation*}
 	f'(0) = 0, \ \mbox{ and } f'(x_1) = 0,
	\end{equation*}
 and
	\begin{equation*}
		x_1 = \left( \frac{2\sigma + 2}{d\sigma C_{GN}} \right)^{\frac{1}{d\sigma - 2}},
	\end{equation*}
 is a local maximum. From \eqref{eq:cGN} we can write
 \begin{equation*}
 x_1 = \| \nabla Q\|_{L^2} \| Q\|_{L^2}^{\alpha}.
 \end{equation*}
 Moreover, since $\eqref{eq:GN}$ is an equality for $Q$, we have
 \begin{equation*}
f(x_1) = f(\| \nabla Q\|_{L^2} \| Q \|_{L^2}^{\alpha}) = E[Q]\| Q\|_{L^2}^{2\alpha}. 
 \end{equation*}
From the condition $E[u_0] \| u_0\|_{L^2}^{2\alpha} < E[Q] \| Q\|_{L^2}^{2\alpha} = f(x_1) $, \eqref{eq:k1} and \eqref{eq:derEn} we obtain that for any $t \in [0,T_{max}(u_0))$, the following inequality is true 
	\begin{equation}\label{eq:contrK}
		f(\| \nabla u(t)\|_{L^2} \| u(t) \|_{L^2}^{\alpha}) \leq E[u(t)]\| u(t) \|_{L^2}^{2\alpha} \leq E[u_0] \| u_0 \|_{L^2}^{2\alpha} < f(x_1).
	\end{equation}
Let $u_0\in H^1(\R^d)$ be such that $ \| \nabla u_0\|_{L^2} \| u_0 \|_{L^2}^{\alpha} < \| \nabla Q\|_{L^2} \| Q \|_{L^2}^{\alpha}$. Suppose by contradiction that there exists a time $t_0 > 0$ such that $\| \nabla u(t_0)\|_{L^2} \| u(t_0) \|_{L^2}^{\alpha} = \| \nabla Q\|_{L^2} \| Q \|_{L^2}^{\alpha}$. Then we would have $f(\| \nabla u(t_0)\|_{L^2} \| u(t_0) \|_{L^2}^{\alpha}) = f(x_1)$ which is in contradiction with \eqref{eq:contrK}. Thus we can conclude that the set $\mathcal K$ is invariant under the flow of equation \eqref{eq:heat}.
\end{proof}
 
\begin{theorem}
 		Let $\frac{2}{d} < \sigma < \frac{2}{d-2}$ and $g \geq 0$.	If $u_0 \in \mathcal K$, then the corresponding solution $u \in C([0,T_{max}(u_0)) ), H^1(\R^d))$ to \eqref{eq:heat} is global in time. 
 \end{theorem}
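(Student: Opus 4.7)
The plan is to combine the invariance of $\mathcal K$ under the flow (established in the preceding lemma) with the conservation of the $L^2$-norm and the blow-up alternative from Theorem~\ref{thm:lwp_intro}. Since $\mathcal K$ is defined by a bound on the quantity $\|\nabla f\|_{L^2}^2\,\|f\|_{L^2}^{2\alpha}$, controlling $\|\nabla u(t)\|_{L^2}$ will follow once we factor out the conserved mass.

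First, I would recall that any solution $u\in C([0,T_{max}),H^1(\R^d))$ to \eqref{eq:heat} preserves the $L^2$-norm, so that $\|u(t)\|_{L^2}=\|u_0\|_{L^2}$ for all $t\in[0,T_{max})$. By the preceding lemma, the assumption $u_0\in\mathcal K$ implies $u(t)\in\mathcal K$ for every $t\in[0,T_{max})$. In particular,
\begin{equation*}
\|\nabla u(t)\|_{L^2}^{2}\,\|u(t)\|_{L^2}^{2\alpha} \;<\; \|\nabla Q\|_{L^2}^{2}\,\|Q\|_{L^2}^{2\alpha}
\qquad\text{for all } t\in[0,T_{max}).
\end{equation*}

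Next, using conservation of the $L^2$-norm, I would divide by $\|u_0\|_{L^2}^{2\alpha}$ (which is strictly positive, since $u_0\ne 0$ by virtue of belonging to $\mathcal K$ with $E[Q]>0$; indeed by \eqref{eq:enQ} and $\sigma>\frac2d$ we have $E[Q]>0$, so $u_0=0$ is incompatible with $u_0\in\mathcal K$). This yields the uniform bound
\begin{equation*}
\sup_{t\in[0,T_{max})} \|\nabla u(t)\|_{L^2}^{2} \;\leq\; \frac{\|\nabla Q\|_{L^2}^{2}\,\|Q\|_{L^2}^{2\alpha}}{\|u_0\|_{L^2}^{2\alpha}}.
\end{equation*}

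Finally, I would invoke the blow-up alternative from Theorem~\ref{thm:lwp_intro} (equivalently Theorems~\ref{thm:proof1}/\ref{thm:lwp}): either $T_{max}=\infty$, or $T_{max}<\infty$ and $\|\nabla u(t)\|_{L^2}\to\infty$ as $t\to T_{max}$. The uniform bound above rules out the second alternative, whence $T_{max}=\infty$ and $u$ is global. There is no real obstacle here beyond verifying that all the pieces from the earlier subsections plug in cleanly: the key input is the invariance $u(t)\in\mathcal K$ proved in the lemma just above, and the fact that the $L^2$ mass is conserved by construction of \eqref{eq:heat}.
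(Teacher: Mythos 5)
Your proposal is correct and follows essentially the same route as the paper: invariance of $\mathcal K$ under the flow plus conservation of the $L^2$-norm yield a uniform bound on $\|\nabla u(t)\|_{L^2}$, and the blow-up alternative then gives $T_{max}=\infty$. (Your parenthetical claim that $u_0=0$ is excluded \emph{by membership in} $\mathcal K$ is not quite right—$0$ formally satisfies both strict inequalities since $E[Q]>0$—but this is immaterial, as $u_0\neq 0$ is already required for $\mu[u_0]$, and hence the equation, to make sense.)
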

 
 \begin{proof}
 	Since $\mathcal K$ is invariant under the flow of equation \eqref{eq:heat}, it follows from the conservation of the $L^2$-norm that for any $t \in [0,T_{max}(u_0))$, 
 	\begin{equation*}
 		\| \nabla u(t) \|_{L^2}^2 < 	\| \nabla Q\|_{L^2}^2 \frac{\| Q\|^{2\alpha}_{L^2}}{\|u_0\|_{L^2}^{2\alpha}}
 	\end{equation*}
 and thus the $H^1$-norm of the solution is uniformly bounded in time. The blow-up alternative \eqref{eq:bu1} implies that the solution is global in time. 
 \end{proof}

\section{Asymptotic Behavior}\label{sec:asy}

In the following proposition, $\Omega$ can be either a regular bounded domain or the whole $\R^d$.
\begin{prop} \label{prp:weak}
	Let $g \leq 0$, $\sigma < \frac{2}{(d-2)^+}$ or if $g > 0$, $\sigma < \frac{2}{d}$. Let $u_0 \in H^1_0(\Omega)$ and let $u \in C([0,\infty), H^1_0(\Omega))$ be the solution to \eqref{eq:heat}. Then there exists a sequence $\{ t_n \}_{n \in \N}$, $t_n \rightarrow \infty$ as $n \rightarrow \infty$, such that 
	\begin{equation*}
			u(t_n) \rightharpoonup u_\infty \ \mbox{ in } \ H^1_0(\Omega), \quad \quad \mu[u(t_n)] \rightarrow \mu[u_\infty] \ \mbox{ as } \ n\rightarrow \infty
	\end{equation*}
	where $u_\infty$ solves the stationary equation
	\begin{equation*}
		\begin{cases}
			\Delta u_\infty + g|u_\infty|^{2\sigma} u_\infty + \mu[u_\infty] u_\infty = 0,\\
			 \| u_\infty\|_{L^2} \leq \| u_0\|_{L^2}.
		\end{cases}
	\end{equation*}
\end{prop}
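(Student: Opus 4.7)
\textbf{Proof plan for Proposition \ref{prp:weak}.}

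The starting point is the energy dissipation identity from Proposition \ref{prop:upgrade},
\begin{equation*}
E[u(t)] + \int_0^t \|\partial_s u(s)\|_{L^2}^2\,ds = E[u_0].
\end{equation*}
Under the stated assumptions, the energy is bounded below (trivially when $g\le 0$, since both terms of $E$ have the same sign; and when $g>0$, $\sigma<2/d$, by Gagliardo--Nirenberg combined with the conservation of the $L^2$-norm, exactly as in the proof of global existence). Hence $\int_0^\infty\|\partial_s u\|_{L^2}^2\,ds<\infty$, and so I can choose a sequence $t_n\to\infty$ with $\|\partial_t u(t_n)\|_{L^2}\to 0$. From the same global-existence bound the sequence $\{u(t_n)\}$ is bounded in $H^1_0(\Omega)$, and $\{\mu[u(t_n)]\}$ is a bounded sequence of real numbers (by the formula \eqref{eq:mu}, Gagliardo--Nirenberg and mass conservation). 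Passing to a further subsequence, I may assume
\begin{equation*}
u(t_n)\rightharpoonup u_\infty\text{ in }H^1_0(\Omega),\qquad \mu[u(t_n)]\to\mu_\infty\in\R.
\end{equation*}

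Next I pass to the limit in the weak formulation of \eqref{eq:heat}. For any test function $\phi\in C_c^\infty(\Omega)$ (or $\phi\in H^2\cap H^1_0(\Omega)$ when $\Omega$ is bounded),
\begin{equation*}
\langle\partial_t u(t_n),\phi\rangle=\langle u(t_n),\Delta\phi\rangle + g\int|u(t_n)|^{2\sigma}u(t_n)\phi + \mu[u(t_n)]\langle u(t_n),\phi\rangle.
\end{equation*}
The left-hand side vanishes in the limit. For the linear terms on the right, weak convergence in $H^1$ suffices. For the power nonlinearity, Rellich--Kondrachov gives strong convergence $u(t_n)\to u_\infty$ in $L^{2\sigma+2}_{\rm loc}(\Omega)$ (which is global in the bounded case, and local otherwise); since $\phi$ is compactly supported this yields $\int|u(t_n)|^{2\sigma}u(t_n)\phi\to\int|u_\infty|^{2\sigma}u_\infty\phi$. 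Consequently $u_\infty$ satisfies, in the distributional sense,
\begin{equation*}
\Delta u_\infty + g|u_\infty|^{2\sigma}u_\infty + \mu_\infty u_\infty = 0,
\end{equation*}
and by density this identity extends to all $\phi\in H^1_0(\Omega)$.

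To identify $\mu_\infty=\mu[u_\infty]$, I test the above limit equation against $u_\infty\in H^1_0(\Omega)$, obtaining
\begin{equation*}
\mu_\infty\|u_\infty\|_{L^2}^2=\|\nabla u_\infty\|_{L^2}^2-g\|u_\infty\|_{L^{2\sigma+2}}^{2\sigma+2},
\end{equation*}
which, provided $u_\infty\not\equiv 0$, is exactly $\mu_\infty=\mu[u_\infty]$ by the definition \eqref{eq:mu}. (The degenerate case $u_\infty\equiv 0$ is consistent with every term vanishing.) Finally, the bound $\|u_\infty\|_{L^2}\le\|u_0\|_{L^2}$ comes from weak lower semicontinuity of the $L^2$-norm along the weakly convergent sequence $u(t_n)$, combined with the exact mass conservation $\|u(t_n)\|_{L^2}=\|u_0\|_{L^2}$; in the bounded domain case Rellich strengthens this to an equality.

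The main technical obstacle is the passage to the limit in the nonlocal term on the whole space $\Omega=\R^d$: weak $H^1$ convergence alone does not control the global Lebesgue norms that appear in $\mu[\cdot]$, and mass can escape to infinity. The resolution is that the \emph{limit} identity $\mu_\infty=\mu[u_\infty]$ is extracted not by passing to the limit in the expression \eqref{eq:mu}, but by exploiting the stationary equation satisfied by $u_\infty$; the possible strict inequality $\|u_\infty\|_{L^2}<\|u_0\|_{L^2}$ is precisely what accommodates the loss of mass at infinity.
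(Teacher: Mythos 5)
Your proposal is correct and follows essentially the same route as the paper: extract a sequence $t_n\to\infty$ along which $\partial_t u(t_n)\to 0$ in $L^2$ (via the energy dissipation identity and the uniform $H^1$ bound), pass to the limit in the equation to obtain the stationary equation with a limit multiplier $\mu_\infty$, and identify $\mu_\infty=\mu[u_\infty]$ by testing against $u_\infty$, with the mass bound from weak lower semicontinuity. You merely spell out details the paper leaves implicit (the local Rellich--Kondrachov argument for the power nonlinearity and the caveat about $u_\infty\equiv 0$), which is consistent with the paper's argument.
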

	
\begin{proof}
		Since $\partial_t u \in L^2([0,\infty), L^2(\Omega))$, $\sup_{t >0} \left|\mu[u(t)]\right|\leq C$, and $\| u\|_{L^\infty([0,\infty),H^1)} \leq C$, then there exists a sequence $\{ t_n \}_{n \in N}$, $t_n \rightarrow \infty$ as $n \rightarrow \infty$ such that
		\begin{equation*}
			\begin{cases}
				u(t_n) \rightharpoonup u_\infty \ \mbox{ in } \ H^1_0(\Omega), \\
				 \partial_t u(t_n) \rightarrow 0 \ \mbox{in} \ L^2(\Omega), \\
				 \mu[u(t_n)] \rightarrow \mu_\infty,
			\end{cases}
		\end{equation*}
		and 
	\begin{equation*}
	 \partial_t u(t_n) - \Delta u(t_n) - |u(t_n)|^{2\sigma}u(t_n) - \mu[u(t_n)] u(t_n) \rightharpoonup - \Delta u_\infty - |u_\infty|^{2\sigma}u_\infty - \mu_\infty u_\infty,
	\end{equation*}
		where the last convergence is in a weak $H^{-1}(\Omega)$ sense. In particular, we get that
		\begin{equation*}
			\Delta u_\infty + g |u_\infty|^{2\sigma} u_\infty + \mu_\infty u_\infty = 0.
		\end{equation*}
		By taking the scalar product of this equation with $u_\infty$, we obtain that $\mu_\infty = \mu[u_\infty]$. 
	\end{proof}
	The result of the Proposition \ref{prp:weak} can be improved in a bounded domain $\Omega \subset \R^d$ due to the Rellich-Kondrachov theorem. Indeed:
	\begin{cor}\label{cor:sub}
		Under the hypothesis of Proposition \ref{prp:weak}, if $\Omega \subset \R^d$ is a regular, bounded domain, then
		\begin{equation*}
			u(t_n) \rightarrow u_\infty \ \mbox{ in } \ H^1_0(\Omega) \ \mbox{ as } \ n\rightarrow \infty
		\end{equation*}
		and $u_\infty$ solves the stationary equation
		\begin{equation*}
			\begin{cases}
				&	\Delta u_\infty + \mu[u_\infty] u_\infty + g|u_\infty|^{2\sigma} u_\infty = 0,\\
				& \| u_\infty\|_{L^2} = \| u_0\|_{L^2}.
			\end{cases}
		\end{equation*}
	\end{cor}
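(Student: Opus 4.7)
The plan is to upgrade the weak $H^1_0$-convergence obtained in Proposition \ref{prp:weak} to strong convergence by invoking the compactness of the embedding $H^1_0(\Omega) \hookrightarrow L^p(\Omega)$ for $p<2d/(d-2)^+$, which is available because $\Omega$ is bounded (Rellich--Kondrachov). In a Hilbert space, weak convergence combined with convergence of the norm implies strong convergence, so the whole task reduces to showing $\|\nabla u(t_n)\|_{L^2} \to \|\nabla u_\infty\|_{L^2}$.

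First, since $\sigma < 2/(d-2)^+$, the exponents $2$ and $2\sigma+2$ both lie strictly below $2d/(d-2)^+$, so the compact embedding yields $u(t_n) \to u_\infty$ strongly in $L^2(\Omega)$ and in $L^{2\sigma+2}(\Omega)$. The $L^2$-convergence, together with the mass conservation $\|u(t_n)\|_{L^2} = \|u_0\|_{L^2}$, gives $\|u_\infty\|_{L^2} = \|u_0\|_{L^2}$; in particular $u_\infty \not\equiv 0$ whenever $u_0 \not\equiv 0$, which is what allows the identification $\mu_\infty = \mu[u_\infty]$ carried out in Proposition \ref{prp:weak}. The $L^{2\sigma+2}$-convergence yields $\|u(t_n)\|_{L^{2\sigma+2}}^{2\sigma+2} \to \|u_\infty\|_{L^{2\sigma+2}}^{2\sigma+2}$.

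To recover the convergence of the gradients, I would rewrite the definition \eqref{eq:mu} of the Lagrange multiplier as
\begin{equation*}
\|\nabla u(t_n)\|_{L^2}^2 = \mu[u(t_n)]\,\|u(t_n)\|_{L^2}^2 + g\,\|u(t_n)\|_{L^{2\sigma+2}}^{2\sigma+2}.
\end{equation*}
By Proposition \ref{prp:weak}, $\mu[u(t_n)] \to \mu[u_\infty]$, and the other two factors on the right converge by the previous step. Thus the right-hand side tends to $\mu[u_\infty]\|u_\infty\|_{L^2}^2 + g\|u_\infty\|_{L^{2\sigma+2}}^{2\sigma+2}$, which is precisely $\|\nabla u_\infty\|_{L^2}^2$ by the definition of $\mu[u_\infty]$. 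Combined with the weak convergence in $H^1_0(\Omega)$, this convergence of Dirichlet norms promotes the convergence to strong convergence in $H^1_0(\Omega)$. The stationary equation for $u_\infty$ is inherited directly from Proposition \ref{prp:weak}. The only delicate point is guaranteeing $u_\infty \not\equiv 0$ so that the identification of $\mu_\infty$ is legitimate, but this is automatic here thanks to the compact embedding and mass conservation, and so no serious obstacle arises.
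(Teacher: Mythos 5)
Your proposal is correct and follows essentially the same route as the paper: compactness of the embedding gives strong convergence in $L^2$ and $L^{2\sigma+2}$ (hence $\|u_\infty\|_{L^2}=\|u_0\|_{L^2}$), and then the formula for $\mu$ together with $\mu[u(t_n)]\to\mu[u_\infty]$ from Proposition \ref{prp:weak} yields convergence of the Dirichlet norms, upgrading weak to strong $H^1_0$-convergence. Your explicit remark that $u_\infty\not\equiv 0$ (needed to identify $\mu_\infty=\mu[u_\infty]$) is a useful detail the paper leaves implicit, but the argument is the same.
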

	\begin{proof}
		The compact embedding $H^1_0(\Omega) \hookrightarrow L^2(\Omega) \cap L^{2\sigma + 2}(\Omega)$ and the weak convergence $ u(t_n) \rightharpoonup u_\infty$ in $H^1$ imply $\| u_\infty\|_{L^2} = \| u_0\|_{L^2}$ and $\| u(t_n) \|_{L^{2\sigma +2}}^{2\sigma +2} \rightarrow \| u_\infty \|_{L^{2\sigma +2}}^{2\sigma +2}$. Combining this convergence with 
		\begin{equation*}
			\mu[u(t_n)] = \frac{\| \nabla u(t_n) \|^2_{L^2} - g \| u(t_n)\|^{2\sigma + 2}_{L^{2\sigma + 2}}}{{\|u_0\|_{L^2}^2}} \rightarrow \mu[u_\infty] = \frac{\| \nabla u_\infty \|^2_{L^2} - g \| u_\infty\|^{2\sigma + 2}_{L^{2\sigma + 2}}}{{\|u_0\|_{L^2}^2}},
		\end{equation*}
		we obtain that $ u(t_n) \rightarrow u_\infty$ in $H^1_0(\Omega)$ as $n\to \infty$.
	\end{proof}

The result of the Corollary \ref{cor:sub} can be further improved by using the Maximum Principle. In this case, we shall work in a bounded domain $\Omega$ where there exists a unique positive solution to the stationary equation \eqref{eq:st_st} which is also the unique minimizer of the problem \eqref{eq:minpr}.
For example, we can take $\Omega = B(0,R)$ and $g > 0$. Then we can prove that if $u_0 \geq 0$, the corresponding solution converges strongly to the ground state.

\begin{theorem}
		Let $g \leq 0$, $\sigma < \frac{2}{(d-2)^+}$ or $g > 0$, $\sigma < \frac{2}{d}$. Let $u_0 \in H^1_0(\Omega)$, $u_0 \geq 0$, $u_0 \centernot{\equiv} 0$ and let $u \in C([0,\infty), H^1_0(\Omega))$ be the solution to \eqref{eq:heat}. Let $\Omega \subset \R^d$ be a bounded domain such that there exists a unique positive solution $Q \in H^1_0(\Omega)$ to equation \eqref{eq:st_st} which is also the unique minimizer of the problem \eqref{eq:minpr}. Then 
		\begin{equation*}
			u(t) \rightarrow Q \ \mbox{ in } \ H^1_0(\Omega)\ \mbox{ as } \ t\rightarrow \infty.
		\end{equation*}
\end{theorem}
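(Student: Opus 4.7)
The plan is to combine three ingredients: subsequential strong convergence from Corollary \ref{cor:sub}, preservation of positivity along the flow, and the uniqueness hypothesis on positive stationary states. The limit is thereby pinned down, and a standard subsequence argument then promotes subsequential convergence to convergence of the full family.

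\textbf{Positivity of the flow.} I would first argue that $u(t,\cdot)\geq 0$ in $\Omega$ for every $t\geq 0$. Writing \eqref{eq:heat} as a linear parabolic equation
\begin{equation*}
\partial_t u - \Delta u = V(t,x)\,u,\qquad V(t,x):=g|u(t,x)|^{2\sigma}+\mu[u(t)],
\end{equation*}
the potential $V$ lies in $L^\infty([0,\infty),L^{(2\sigma+2)/(2\sigma)}(\Omega))$ plus a bounded function of time, because the global bounds from Subsection \ref{ss:gwp} yield $u\in L^\infty_tH^1_0\hookrightarrow L^\infty_tL^{2\sigma+2}_x$ and $\mu[u(t)]$ uniformly bounded. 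Testing the equation against the negative part $u_-:=\min\{u,0\}$, using H\"older's inequality together with Gagliardo--Nirenberg interpolation, yields
\begin{equation*}
\tfrac{1}{2}\tfrac{d}{dt}\|u_-(t)\|_{L^2}^2+\|\nabla u_-(t)\|_{L^2}^2 \;\leq\; C\|u_-(t)\|_{L^2}^2 + \tfrac{1}{2}\|\nabla u_-(t)\|_{L^2}^2,
\end{equation*}
and Gr\"onwall's inequality combined with $u_-(0)\equiv 0$ then delivers $u_-(t)\equiv 0$ for every $t\geq 0$.

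\textbf{Identification of the limit and full convergence.} Given any sequence $s_n\to\infty$, Corollary \ref{cor:sub} extracts a subsequence $t_n$ along which $u(t_n)\to u_\infty$ strongly in $H^1_0(\Omega)$, with $u_\infty$ solving \eqref{eq:st_st} and $\|u_\infty\|_{L^2}=\|u_0\|_{L^2}$. Passing to a further a.e.\ convergent subsequence and using the previous step yields $u_\infty\geq 0$, while $\|u_0\|_{L^2}>0$ forces $u_\infty\not\equiv 0$. The stationary equation then reads $-\Delta u_\infty=(g|u_\infty|^{2\sigma}+\mu[u_\infty])u_\infty$ with coefficient in a Lebesgue class strong enough (via standard $L^p$-elliptic bootstrap) to render $u_\infty\in C(\overline{\Omega})$; the elliptic strong maximum principle then gives $u_\infty>0$ in $\Omega$. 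By the uniqueness hypothesis on positive $H^1_0$-solutions of \eqref{eq:st_st} with prescribed mass $\|u_0\|_{L^2}$, we conclude $u_\infty=Q$. Since the limit is independent of the extracted subsequence, every sequence $s_n\to\infty$ has a subsubsequence along which $u(\cdot)\to Q$ in $H^1_0(\Omega)$, and the standard subsequence principle upgrades this to $u(t)\to Q$ in $H^1_0(\Omega)$ as $t\to\infty$.

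\textbf{Main obstacle.} The delicate step is the positivity of the flow, since the potential $V$ is unbounded in $x$, merely lying in an $L^p$ class. One must therefore invoke a parabolic weak maximum principle that tolerates such unbounded potentials (of Aronson--Serrin type), which is available precisely in the energy-subcritical regime $\sigma<2/(d-2)^+$ considered here; alternatively, one may approximate $u_0$ by smooth non-negative data, apply the classical comparison principle for the regularized problems, and pass to the limit using the local well-posedness established in Section \ref{sec:exis}. All remaining steps are routine consequences of the machinery built in Sections \ref{sec:prel}--\ref{sec:exis}.
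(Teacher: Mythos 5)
There is a genuine gap in your final step. The "standard subsequence principle" requires that \emph{every} sequence $s_n\to\infty$ admit a subsequence along which $u$ converges in $H^1_0$ to the same limit $Q$. But Corollary \ref{cor:sub} does not provide this: its proof (via Proposition \ref{prp:weak}) produces one \emph{particular} sequence $t_n\to\infty$, chosen so that $\partial_t u(t_n)\to 0$ in $L^2$, which is what forces the limit to solve the stationary equation \eqref{eq:st_st} and, in turn, yields $\mu[u(t_n)]\to\mu[u_\infty]$ and hence strong $H^1$ convergence. The fact that $\partial_t u\in L^2([0,\infty),L^2)$ only guarantees the existence of \emph{some} such times; along an arbitrary prescribed sequence $s_n$ you can only extract weak $H^1$ and strong $L^2$, $L^{2\sigma+2}$ limits, you cannot conclude that the limit is stationary, and you cannot conclude strong $H^1$ convergence. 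Consequently your claim that the limit is "independent of the extracted subsequence" is not established, and the argument does not close. (This is exactly the point where full-trajectory convergence for gradient flows is delicate in general, e.g.\ when stationary states form a continuum.)

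The paper bridges this gap using the energy monotonicity \eqref{eq:derEn} together with the variational characterization of $Q$. Along the good sequence $t_n$ one has $E[u(t_n)]\to E[Q]$, and since $t\mapsto E[u(t)]$ is nonincreasing and bounded below by $E[Q]=e_0$, the \emph{full} limit $E[u(t)]\searrow E[Q]$ holds. Then one argues by contradiction: if $\|u(t_k)-Q\|_{H^1}\geq\varepsilon$ along some $t_k\to\infty$, extract a weak $H^1$ limit $v$; the compact embeddings give $\|v\|_{L^2}=\|u_0\|_{L^2}$ and convergence of the $L^{2\sigma+2}$ norms, and weak lower semicontinuity gives $E[v]\leq\lim E[u(t_k)]=E[Q]$, so $v$ is a constrained minimizer and $v=Q$ by the uniqueness hypothesis on \eqref{eq:minpr}; convergence of the energies and of $\|u(t_k)\|_{L^{2\sigma+2}}$ then forces $\|\nabla u(t_k)\|_{L^2}\to\|\nabla Q\|_{L^2}$, upgrading weak to strong $H^1$ convergence and contradicting the assumption. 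Your positivity step and the identification $u_\infty=Q$ along the good sequence are in line with the paper (which simply invokes the maximum principle), but you should replace your last paragraph's subsequence argument by an energy-monotonicity argument of this type.
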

	
\begin{proof}
		Let $\{t_n\}_{n\in \N}$, $t_n \rightarrow \infty$ as $n\to \infty$, be such that $u(t_n) \rightarrow u_\infty$ in $H^1_0(\Omega)$ where $u_\infty$ satisfies \eqref{eq:st_st}. By the Maximum Principle (see \cite[Ch. 2, Sec. 4]{Fr2013}), $u(t_n) \geq 0$ which implies $u_\infty \geq 0$ and $u_\infty \centernot{\equiv} 0$ since $\| u_\infty\|_{L^2} = \| u_0\|_{L^2}>0$. From the uniqueness of the positive solution to \eqref{eq:st_st}, we conclude that $u_\infty = Q$ where $Q$ is the unique minimizer of the energy under the constraint $\|Q\|_{L^2} = \| u_0\|_{L^2}$. Equation \eqref{eq:derEn} implies that the energy is a continuous, decreasing function of time. In particular, there exists the limit
		\begin{equation}\label{eq:grt}
			E[u(t)] \searrow E_{\infty} \geq E[Q],
		\end{equation}
		as $t \to \infty$. 	This limit $E_\infty$ is the energy of the ground state because 
		\begin{equation*}
		 E_\infty = \lim_{t \rightarrow \infty} E[u(t)] = \lim_{n \rightarrow \infty} E[u(t_n)] = E[Q].
		\end{equation*}
		Now we show that $u(t) \rightarrow Q$ in $H^1_0(\Omega)$. Suppose, by contradiction, that this convergence is not true. In particular, there exists a time sequence $\{t_k\}_{k\in \N}$ such that $\| u(t_k)\|_{L^2} = \| u_0\|_{L^2},$ $E[u(t_k)] \rightarrow E[Q]$ and there exists $\varepsilon > 0$ so that 
		\begin{equation*}
		 \| u(t_k) - Q\|_{H^1} \geq \varepsilon \ \mbox{for all} \ k \in \N.
		\end{equation*}
		 Since $\sup_{k\in\N}\|u(t_k)\|_{H^1} \leq C < \infty$, there exists a subsequence, still denoted by $\{t_k\}_{k\in \N}$ and a profile $u \in H^1_0(\Omega)$ so that $u(t_k) \rightharpoonup u$ in $H^1_0(\Omega)$. By the compact embedding $H^1_0(\Omega) \hookrightarrow L^{2\sigma +2}(\Omega)$, we obtain $\|u(t_k)\|_{L^{2\sigma + 2}} \rightarrow \|u\|_{L^{2\sigma + 2}}$. By the lower semi-continuity, it follows that
		\begin{equation*}
			E[u] \leq \liminf_k E[u(t_k)] = E[Q],
		\end{equation*}
		which implies that $E[u] = E[Q]$. In particular, $\| \nabla u(t_k)\|_{L^2} \rightarrow \|\nabla Q\|_{L^2}$, and so $u(t_k) \rightarrow Q$ in $H^1_0(\Omega)$. This is a contradiction with $\| u(t_k) - Q\|_{H^1} \geq \varepsilon$. 
\end{proof}
	
\begin{remark}
	We would like to notice that in general, there exist initial data not converging to the ground state. Indeed if $g = 1$, $\sigma < \frac{2}{d}$ and $u_0 \in \mathcal Z$, where $\mathcal Z$ is defined in \eqref{eq:zdef}, then $u_\infty$ where $u_\infty \in \mathcal{Z}$ is a stationary state with $\mu[u_\infty] < 0$. Similarly, if $u_0 \in \mathcal W$ where $\mathcal W$ is defined in \eqref{eq:wdef} then $u_\infty \in \mathcal W$ and $\mu[u_\infty] > 0$. Thus, at least in one of the above cases, the solution does not converge to the ground state. 
\end{remark}	

 Finally, we show that in the whole space $\R^d$ and when $\sigma \geq \frac{2}{d}$ there exists a set of initial data whose solutions satisfy the grow-up condition. Specifically, if $E[u_0] < 0$, then there exists a sequence $\{t_k\}_{k\in \N}$, $t_k \rightarrow T_{max}$ such that 
 \begin{equation*}
 \lim_{k \to \infty} \| \nabla u(t_k)\|_{L^2} = \infty.
 \end{equation*}
 
 \begin{theorem}
	 Let $g = 1 $, $\sigma \geq \frac{2}{d}$. Let $u_0 \in H^1(\R^d)$ be such that $E[u_0] < 0$. Let $T_{max} (u_0) > 0$ be the maximum time of existence of the corresponding strong solution to \eqref{eq:heat}. Then either $T_{max}(u_0) = \infty$ and
	 \begin{equation*}
	 \limsup_{t \rightarrow \infty} \| \nabla u(t)\|_{L^2} = \infty,
	 \end{equation*}
	 or $T_{max}(u_0) < \infty$.
	\end{theorem}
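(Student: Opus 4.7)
The plan is to argue by contradiction. Suppose $T_{max}=\infty$ and $\sup_{t\ge 0}\|\nabla u(t)\|_{L^2}<\infty$. Since the $L^2$-norm is conserved along the flow, the Gagliardo-Nirenberg inequality (valid because $\sigma<\tfrac{2}{(d-2)^+}$) bounds $\|u(t)\|_{L^{2\sigma+2}}$ uniformly in time, whence $E[u(t)]$ is bounded: $-K\le E[u(t)]\le E[u_0]<0$ for some $K>0$. Combined with Proposition \ref{prop:upgrade}, this forces $E[u(t)]\searrow E_\infty$ for some $E_\infty\in[-K,E[u_0]]$, and $\int_0^\infty\|\partial_s u(s)\|_{L^2}^2\,ds=E[u_0]-E_\infty<\infty$.

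From this $L^1$-integrability I can select a sequence $t_n\to\infty$ with $\partial_t u(t_n)\to 0$ in $L^2(\R^d)$. The uniform $H^1$ bound together with the boundedness of $\mu[u(t_n)]$ yields, along a further subsequence, $u(t_n)\rightharpoonup u_\infty$ weakly in $H^1(\R^d)$, strongly in $L^p_{\mathrm{loc}}$ for subcritical $p$, and almost everywhere, with $\mu[u(t_n)]\to\mu_\infty$. Exactly as in the proof of Proposition \ref{prp:weak}, the limit solves
\[
\Delta u_\infty + g|u_\infty|^{2\sigma}u_\infty + \mu_\infty u_\infty = 0,
\]
with $\mu_\infty=\mu[u_\infty]$ whenever $u_\infty\ne 0$. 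Combining the Pohozaev identities \eqref{eq:poh1} and \eqref{eq:poh2}, I then obtain
\[
E[u_\infty]=\frac{d\sigma-2}{2d\sigma}\|\nabla u_\infty\|_{L^2}^2\ge 0,
\]
thanks to $\sigma\ge 2/d$ and $g\ge 0$ (the case $u_\infty\equiv 0$ giving $E[u_\infty]=0$ trivially).

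The main obstacle is turning the non-negativity of $E[u_\infty]$ into non-negativity of $E_\infty$, so as to contradict $E_\infty\le E[u_0]<0$. Weak $H^1$-convergence by itself is not enough in $\R^d$ because $L^{2\sigma+2}$-mass may escape to infinity. I would resolve this by a concentration-compactness / profile decomposition argument: applying the Brezis-Lieb lemma to $\|u(t_n)\|_{L^{2\sigma+2}}^{2\sigma+2}$, $\|\nabla u(t_n)\|_{L^2}^2$, and $\|u(t_n)\|_{L^2}^2$, one writes $E[u(t_n)]=E[u_\infty]+E[u(t_n)-u_\infty]+o(1)$, and iterates the extraction of concentrating bubbles from the residual using the translation invariance of the equation on $\R^d$. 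Because $\partial_t u(t_n)\to 0$ in $L^2$, each nontrivial bubble is again a solution of the same stationary equation and therefore has non-negative energy by the Pohozaev calculation above, while the final remainder has $L^{2\sigma+2}$-norm tending to zero and hence contributes a non-negative limit energy. Summing all pieces yields $E_\infty\ge 0$, the desired contradiction. The delicate point is to carry out the profile decomposition rigorously while respecting the $L^2$-constraint $\|u(t_n)\|_{L^2}=\|u_0\|_{L^2}$; once that is in place, the Pohozaev sign is the structural obstruction driving the contradiction.
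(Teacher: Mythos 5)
Your argument coincides with the paper's own proof for most of its length: assume $T_{max}=\infty$ with a uniform bound on $\|\nabla u(t)\|_{L^2}$, deduce from Gagliardo--Nirenberg and mass conservation that $E[u(t)]$ is bounded below, hence decreases to a limit $E_\infty\le E[u_0]<0$ with $\partial_t u\in L^2([0,\infty),L^2)$ by \eqref{eq:derEn}; extract $t_n\to\infty$ with $\partial_t u(t_n)\to0$ in $L^2$, $u(t_n)\rightharpoonup u_\infty$ in $H^1$, $\mu[u(t_n)]\to\mu_\infty$, so that $u_\infty$ solves the stationary equation and, by the Pohozaev identities \eqref{eq:poh1}--\eqref{eq:poh2} (i.e.\ \eqref{eq:enQ}), $E[u_\infty]=\frac{d\sigma-2}{2d\sigma}\|\nabla u_\infty\|_{L^2}^2\ge0$ since $\sigma\ge \frac2d$. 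The two arguments part ways only at the final contradiction. The paper closes in one line, invoking weak lower semicontinuity of $E$ along $t_n$ to get $E[u_\infty]\le\lim_n E[u(t_n)]\le E[u_0]<0$. You instead distrust precisely this step, observing that on $\R^d$ with $g>0$ the term $-\frac{g}{2\sigma+2}\|\cdot\|_{L^{2\sigma+2}}^{2\sigma+2}$ has the unfavourable sign for semicontinuity if $L^{2\sigma+2}$-mass escapes to infinity, and you propose to prove $E_\infty\ge0$ by a Brezis--Lieb/profile decomposition: each bubble, being a weak limit of translates of the almost-stationary equation with the constant multiplier $\mu_\infty$, again has nonnegative energy by the same Pohozaev computation (which indeed only uses that the multiplier is constant, not that it equals $\mu$ of the profile), while the remainder contributes nonnegatively. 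This is the genuinely different, and more cautious, part of your proposal: it is robust against loss of compactness, whereas the paper's route is shorter but rests on a semicontinuity claim of exactly the kind you question. The price is that, as written, your closing step is a sketch rather than a proof: the decomposition along the mass-constrained sequence, the passage of the equation to every bubble, the energy splitting, and the termination of the extraction are asserted, not carried out, as you yourself concede. To make it complete you would need to execute this machinery, for instance by noting that a nontrivial $H^1(\R^d)$ stationary profile forces $\mu_\infty<0$ and carries a quantized amount of $\|\nabla\cdot\|_{L^2}$, so only finitely many bubbles can occur; until then, the one new step of your variant remains open, while everything before it reproduces the paper's argument.
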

	\begin{proof}
	 Suppose that $T_{max}(u_0) = \infty$ and that there exists a constant $C>0$ such that 
	 \begin{equation*}
	 \sup_{t \geq 0} \| \nabla u\|_{L^2} \leq C. 
	 \end{equation*}
	Gagliardo-Nirenberg's inequality and the conservation of the $L^2$-norm imply that
	\begin{equation*}
	 E[u(t)] \geq - \frac{1}{2\sigma + 2} \| u(t)\|_{L^{2\sigma + 2}}^{2\sigma + 2} \gtrsim - \| \nabla u(t)\|_{L^2}^{d\sigma} \geq -C^{d\sigma}.
	\end{equation*}
	Thus the energy is bounded from below. Since $E[u(t)]$ is a continuous, decreasing function, there exists the limit 
	\begin{equation*}
	 \lim_{t\to \infty} E[u(t)]= E_\infty > -\infty,
	\end{equation*}
	and $\partial_t u \in L^2([0,\infty),L^2(\R^d))$ because
	\begin{equation*}
	 \int_0^\infty \| \partial_t u(s)\|_{L^2}^2 ds = E[u_0] - E_\infty < \infty.
	\end{equation*}
	In the same way, we can see that $\mu[u(t)]$ is bounded from below 
	\begin{equation*}
	 \mu[u(t)] =\frac{\| \nabla u(t)\|_{L^2} - \| u(t)\|_{L^{2\sigma + 2}}^{2\sigma + 2}}{\|u_0\|_{L^2}^2} \gtrsim - \| u(t)\|_{L^{2\sigma + 2}}^{2\sigma + 2} \geq -C^{d\sigma},
	\end{equation*}
	and from above. 
	Then there exists a sequence $\{ t_n \}_{n \in \N}$, $t_n \rightarrow \infty$ as $n \rightarrow \infty$ such that
	\begin{equation*}
			\begin{cases}
				 u(t_n) \rightharpoonup u_\infty \ \mbox{ in } \ H^1(\R^d), \\
				 \partial_t u(t_n) \rightarrow 0 \ \mbox{in} \ L^2(\R^d),\\
				 \mu[u(t_n)] \rightarrow \mu_\infty, 
			\end{cases}
	\end{equation*}
	and 
	\begin{equation*}
	 \partial_t u(t_n) - \Delta u(t_n) - |u(t_n)|^{2\sigma}u(t_n) - \mu[u(t_n)] u(t_n) \rightharpoonup - \Delta u_\infty - |u_\infty|^{2\sigma}u_\infty - \mu_\infty u_\infty,
	\end{equation*}
		where the last convergence is in a weak $H^{-1}(\Omega)$ sense. In particular, we get that
		\begin{equation*}
			\Delta u_\infty + |u_\infty|^{2\sigma} u_\infty + \mu_\infty u_\infty = 0,
		\end{equation*}
		and so $u_\infty$ is a stationary state. From \eqref{eq:enQ}
		\begin{equation*}
		 E[u_\infty] = \frac{d\sigma -2 }{2d\sigma} \| \nabla u_\infty\|_{L^2}^2 \geq 0.
		\end{equation*}
		On the other hand, by the weak lower semicontinuity, we have
		\begin{equation*}
		 E[u_\infty] \leq \lim_n E[u(t_n)] \leq E[u_0] <0
		\end{equation*}
		which is a contradiction with the former inequality. Hence $\| \nabla u(t)\|_{L^2}$ cannot be uniformly bounded in time. 
	\end{proof} 
 \section*{acknowledgements}
 We sincerely thank the anonymous referee for invaluable suggestions and for pointing out a flaw in a previous version of the manuscript that led us to prove the LWP result in Subsection $3.1.$ as it is presented now. \newline
The second author was partly supported by Istituto Nazionale di Alta Matem- atica (GNAMPA Research Projects) and by the MIUR Excellence Department Project awarded to the Department of Mathematics, University of Rome Tor Vergata, CUP E83C18000100006. The third authors is partially supported by Istituto Nazionale di Alta Matematica (GNAMPA group).
\section*{Conflict of interests declaration}
The authors declare no conflict of interests.
\section*{Data availability statement}
Data sharing not applicable to this article as no datasets were generated or analysed during the current study.
 
	\bibliographystyle{plain}
	\bibliography{heat_eq}
 \end{document}